\author{Jeroen Maes}
\author{Fernando Muro}
\address{Universidad de Sevilla,
Facultad de Matemáticas,
Departamento de Álgebra,
Avda. Reina Mercedes s/n,
41012 Sevilla, Spain}
\email{fmuro@us.es}
\urladdr{http://personal.us.es/fmuro}
\title{Derived homotopy algebras}
\subjclass[2020]{18M70, 55U35}
\begin{document}

\begin{abstract}
    We develop a theory of minimal models for algebras over an operad defined over a commutative ring, not necessarily a field, extending and supplementing the work of Sagave for the associative case.
\end{abstract}

\maketitle

\tableofcontents

\section{Introduction}

Over a field, minimal models for operadic algebras go back to Kadeishvili's theorem \cite{kadeishvili_1988_structure_infty_algebra}. He showed that the homology graded algebra $H_*(A)$ of a differential graded associative algebra $A$ carries an essentially unique $\Oinf{\O{A}}$-algebra structure, consisting of degree $r-2$ maps, $r\geq 3$,
\[m_r\colon H_*(A)^{\otimes^r}\To H_*(A)\]
satisfying certain equations,
which is $\infty$-quasi-isomorphic to $A$. This structure on $H_*(A)$ is minimal because it has trivial differential. The extension of this result to algebras over Koszul operads in characteristic zero follows from the homotopy transfer theorem \cite[\S10.3]{loday_vallette_2012_algebraic_operads}. This is possible over a field because all modules are projective, hence any chain complex is homotopy equivalent to its homology, but this does not happen over a general commutative ring.

Over a commutative ring $\kk$, Sagave \cite{sagave_2010_dgalgebras_derived_algebras} had the clever idea of replacing complexes with $\kk$-projective bicomplexes whose vertical homology is also $\kk$-projective. It is well known that any chain complex, regarded as a bicomplex concentrated in horizontal degree $0$, has a $\kk$-projective resolution of that kind concentrated in non-negative horizontal degrees \cite[\S XVII.1]{cartan_eilenberg_1956_homological_algebra}.

\begin{center}
    \begin{tikzpicture}
        \draw[->, gray] (-0.2,0) -- (5,0); 
        \draw[->, gray] (0,-2.8) -- (0,3); 
        \foreach \x in {0,...,4}
        \foreach \y in {-2,...,2}
            {
                \node[shape=circle,fill=black,scale=.4] (\x-\y) at (\x,\y) {}; 
            }
        \foreach \x in {1,...,4}
        \foreach \y in {-2,...,2}
            {
                \draw[->, thick] (\x-.1,\y) -- (\x-.9,\y); 
            }
        \foreach \y in {-2,...,2}
            {
                \draw[->, thick] (4.5,\y) -- (4.1,\y); 
            }
        \foreach \x in {0,...,4}
        \foreach \y in {-1,...,2}
            {
                \draw[->, thick] (\x,\y-.1) -- (\x,\y-.9); 
            }
        \foreach \x in {0,...,4}
            {
                \draw[->, thick] (\x,2.5) -- (\x,2.1); 
                \draw[thick] (\x,-2.1) -- (\x,-2.4); 
            }
        \node[rotate=90] at (-0.5,0) {complex};
        \node at (2,2.8) {bicomplex};
        \fill[gray, opacity = .2] (-0.3,-2.4) rectangle (0.3,2.5);
    \end{tikzpicture}
\end{center}
Sagave used simplicial techniques to extend this to associative algebras, and then he constructed a bigraded minimal model $\minimal{A}$ for any differential graded associative algebra $A$. This minimal model is a bicomplex which carries a structure resembling $\Oinf{\O{A}}$-algebras that he called derived $\Oinf{\O{A}}$-algebra. It consists of bidegree $(-i,i-1)$ morphisms, $i\geq 2$,
\[d_i\colon \minimal{A}\To\minimal{A},\]
and bidegree $(i,r-2-i)$ maps, $r\geq 2$, $i\geq 0$,
\[m_{i,r}\colon\minimal{A}^{\otimes^r}\To \minimal{A},\]
satisfying certain equations.
It is minimal in the sense that the vertical differential vanishes. Moreover, the horizontal differential of $\minimal{A}$ is a projective resolution of $H_*(A)$. It was later discovered by Cirici, Egas Santander, Livernet, and Whitehouse that derived $\Oinf{\O{A}}$-algebras are the same as split filtered $\Oinf{\O{A}}$-algebras \cite[Theorem 4.56]{cirici_egas_santander_livernet_whitehouse_2018_derived_infinity_algebras}. The minimal model is equipped with a filtration preserving $\infty$-morphism $\minimal{A}\leadsto A$ which induces an isomorphism between the $E^2$ pages of the source and target bicomplex spectral sequences.

In this paper we extend Sagave's results to algebras $A$ over a Koszul operad $\O{O}$ (in the sense of \cite{fresse_2004_koszul_duality_operads}) with trivial differential such that $\O{O}$ is non-symmetric or $\Q\subset\kk$. We start with the Cartan--Eilenberg model structure on bicomplexes $\bichainCE$ defined in \cite{muro_roitzheim_2019_homotopy_theory_bicomplexes}, whose cofibrant resolutions are $\kk$-projective and have $\kk$-projective vertical homology. In \S\ref{cartan_eilenberg_section}, we transfer this model structure to $\O{O}$-algebras in bicomplexes, as a semi-model structure, so we can take a cofibrant resolution $\cofres{A}\to A$ here. In \S\ref{derived_operads_section}, we define an operad $\D{\O{O}}$ in the category $\grchain$ of graded chain complexes whose algebras are the same as $\O{O}$-algebras in bicomplexes. We show that $\D{\O{O}}$ is Koszul (Theorem \ref{main_main}), so we can apply the homotopy transfer theorem to a contraction of $\cofres{A}$ onto its vertical homology to obtain a minimal model $\minimal{A}$ of $A$, see \S\ref{section:minimal_models}. This minimal model carries a $\Oinf{\D{\O{O}}}$-algebra structure and it is essentially unique. This essential uniqueness, which is established at the end of \S\ref{cobar_section}, is what requires the aforementioned hypotheses on $\O{O}$, for most of the paper we can do with less.

For $\O{O}=\O{A}$ the associative operad, $\D{\O{A}}$ had already been considered by Livernet, Roitzheim, and Whitehouse in \cite{livernet_roitzheim_whitehouse_2013_derived_infty_algebras}. They also computed $\Oinf{\D{\O{A}}}$ and showed that their algebras coincide with Sagave's derived $\Oinf{\O{A}}$-algebras. For other operads, we define derived $\Oinf{\O{O}}$-algebras as $\Oinf{\D{\O{O}}}$-algebras concentrated in non-negative horizontal degrees instead. We then show that they coincide with split filtered $\Oinf{\O{O}}$-algebras, as in the associative case, see \S\ref{presentation}.

We avoid Sagave's simplicial techniques, since the Dold--Kan correspondence is not strictly symmetric \cite{richter_2003_symmetry_properties_doldkan} and this could cause problems with symmetric operads. Our approach via Koszul duality theory has the advantage that it yields closed formulas for the structure maps of a minimal model, unlike the inductive construction used by Kadeishvili and Sagave. Moreover, we show that most of the times we do not need a full cofibrant resolution $\cofres{A}$ to proceed (Remark \ref{minimal_remark}). We illustrate this with Example \ref{example_dugger_shipley}, fully computing a minimal model for the Dugger--Shipley integral differential graded associative algebra \cite{dugger_shipley_2009_curious_example_triangulatedequivalent},
\[A=\frac{\Z\langle e,x^{\pm1}\rangle}{(e^2,ex+xe-x^2)},\qquad
    \abs{e}=\abs{x}=1,\qquad d(e)=p,\quad d(x)=0,\]
where $p\in\Z$ is a prime. This minimal model is remarkably small, although non-trivial,
\begin{align*}
    \minimal{A}             & =\frac{\Z\langle x^{\pm1}, c\rangle}{(c^2,cx+xc)}, &
    (\abs{x}_h,\abs{x}_v)   & =(0,1),                                            &
    (\abs{y}_h,\abs{y}_v)   & =(1,0),                                                                                    \\
                            &                                                    &
    d_h(x)                  & =0,                                                &
    d_h(c)                  & =-p,                                                                                       \\
                            &                                                    & m_{1,2}(cx^{2i-1},x^j) & =x^{2i+j}, &
    m_{1,2}(cx^{2i-1},cx^j) & =cx^{2i+j}.
\end{align*}
The map $m_{0,2}$ is the product of the associative algebra $\minimal{A}$ and the non-indicated operations are trivial.
As far as we know, this is the third full computation of a non-trivial derived $\Oinf{\O{A}}$-algebra in the literature, after \cite[Example 5.1]{sagave_2010_dgalgebras_derived_algebras} and \cite[\S5.2]{aponte_roman_livernet_robertson_whitehouse_ziegenhagen_2015_representations_derived_infinity}. Sagave's example, though, is non-trivial because it has a non-formal underlying complex (see Remark \ref{formal_complex}) unlike Dugger--Shipley's whose non-formality is related to the associative product. We do not know if the example of
Aponte Rom\'an, Livernet, Robertson, Whitehouse, and Ziegenhagen is non-formal.
We compute a commutative and a Lie example too (Examples \ref{commutative_minimal} and \ref{Lie_minimal}).

We also address the strictification process, consisting of reconstructing any differential graded $\O{O}$-algebra $A$ from a minimal model $\minimal{A}$, up to quasi-isomorphism. Sagave did this in the associative case, using a non-functorial construction due to Kontsevich and Soibelman \cite[\S6.2]{kontsevich_soibelman_2009_notes_infty_algebras} since he actually works with strictly unital derived $\Oinf{\O{A}}$-algebras. We instead define a functorial construction $\tot\bar\cobar_{\D{\O{O}}}\bbar_{\D{\O{O}}}\minimal{A}$ based on the bar-cobar adjunction, which is also used to prove the essential uniqueness of minimal models. This $\tot\bar\cobar_{\D{\O{O}}}\bbar_{\D{\O{O}}}\minimal{A}$ is actually a cofibrant replacement of $A$ (Theorem \ref{recover_dg}, Remark \ref{cofibrant_resolution}). As an application, we prove that the set of homotopy classes of differential graded $\O{O}$-algebra maps $A\to B$ is a quotient of the set of derived $\infty$-morphisms $\minimal{A}\leadsto B$ from a minimal model of the source (Corollary \ref{representatives}). This is new even for the associative operad. We apply it to give an elementary proof of the fact that Dugger--Shipley's $A$ is not formal. Further examples will be computed in a forthcoming paper on universal Massey products for operadic algebras over a ring.

Some of the results in \S\ref{derived_operads_section} and \S\ref{koszul}, most notably Theorem \ref{main_main}, were obtained by the first author in his thesis \cite{maes_2016_derived_homotopy_algebras}, directed by the second author. The proofs here are substantially different and simpler. We here use the operadic Koszul complex while \cite{maes_2016_derived_homotopy_algebras} uses the bar construction.

We assume that the reader is familiar with operad theory and Koszul duality theory for operads. We mostly refer to \cite{fresse_2004_koszul_duality_operads} and other papers by Fresse, since we work over a commutative ring. Nevertheless, we borrow terminology and notation from \cite{loday_vallette_2012_algebraic_operads} because we feel it is nowadays more common.

\section{Cartan--Eilenberg resolutions of algebras over an operad}\label{cartan_eilenberg_section}

A Cartan--Eilenberg resolution of a chain complex is a bicomplex which yields projective resolutions of chain, cycle, boundary, and homology modules, see \cite[\S XVII.1]{cartan_eilenberg_1956_homological_algebra}. In this section, we construct Cartan--Eilenberg resolutions with algebraic structure for $\O{O}$-algebras in $\chain$.

\begin{definition}\label{bicomplexes}
    An \emph{unbounded bicomplex} $X$ is a bigraded module equipped with horizontal and vertical differentials,
    \begin{align*}
        d_h\colon X_{p,q} & \To X_{p-1,q}, &
        d_v\colon X_{p,q} & \To X_{p,q-1},
    \end{align*}
    satisfying $d_hd_v+d_vd_h=0$. The \emph{horizontal} and \emph{vertical degrees} of an element $x\in X_{p,q}$ are $\abs{x}_h=p$ and $\abs{x}_v=q$, respectively, and the \emph{total degree} is $\abs{x}=p+q$. The \emph{bidegree} of $x$ is the pair $\abs{x}_b=(p,q)$. A morphism of unbounded bicomplexes is a bigraded morphism $f\colon X\to Y$ which commutes with horizontal and vertical differentials. We denote the category of unbounded bicomplexes by $\bichainu$.
    This category is equipped with a closed symmetric monoidal structure given by
    \[(X\otimes Y)_{p,q}=\bigoplus_{\substack{i+s=p\\j+t=q}}X_{i,j}\otimes Y_{s,t}.\]
    and
    \begin{align*}
        d_h(x\otimes y) & =d_h(x)\otimes y+(-1)^{\abs{x}}x\otimes d_h(y)  \\
        d_v(x\otimes y) & =d_v(x)\otimes y+(-1)^{\abs{x}}x\otimes d_v(y).
    \end{align*}
    Moreover, the symmetry constraint uses the Koszul sign rule with respect to the total degree,
    \[x\otimes y\mapsto (-1)^{\abs{x}\abs{y}}y\otimes x.\]

    A \emph{bounded bicomplex}, or just \emph{bicomplex} in this paper, is an unbounded bicomplex $X$ which vanishes in negative horizontal degrees, $X_{p,q}=0$, $p<0$. The full subcategory formed by these objects will be denoted by $\bichain$. This category inherits the closed monoidal structure from $\bichainu$.
\end{definition}

It may look strange to keep the name \emph{bicomplex} for bounded ones but these are the only ones we need in the homotopical part. Unbounded complexes are just used from a combinatorial viewpoint.

\begin{remark}\label{vertical_inclusion}
    A chain complex can be regarded as a bicomplex concentrated in horizontal degree $0$. This yields a full inclusion
    \begin{equation*}
        \chain\hookrightarrow\bichain
    \end{equation*}
    which strictly preserves tensor products and related constraints, like $\bichain\hookrightarrow\bichainu$. We can use it to consider algebras in $\bichain$ over an operad $\O{O}$ in $\chain$.
    The category of $\O{O}$-algebras in $\bichain$ is denoted by $\algebras{\bichain}{\O{O}}$. The previous full inclusion extends to $\O{O}$-algebras,
    \[\algebras{\chain}{\O{O}}\hookrightarrow\algebras{\bichain}{\O{O}}.\]
\end{remark}

Cartan--Eilenberg resolutions of $\O{O}$-algebras in $\chain$ will live in $\algebras{\bichain}{\O{O}}$. We use some homotopical techniques to construct them.

Given an adjoint pair $F\dashv U$,
\begin{equation*}
    \mathscr{M}\mathop{\rightleftarrows}_U^F\mathscr{C}
\end{equation*}
with $\mathscr{M}$ a model category, we say that $\mathscr{C}$ admits the \emph{transferred model structure} if it has a (necessarily unique) model structure where a morphism $f$ in $\mathscr{C}$ is a weak equivalence or (trivial) fibration if and only if the morphism $U(f)$ is so in $\mathscr{M}$.

\emph{Semi-model categories} are one of the many relaxations of the axioms of a model category, albeit a very close one, see \cite[\S12.1.3]{fresse_2009_modules_operads_functors}. Given an adjunction as above, where $\mathscr{M}$ is a cofibrantly generated honest model category with given sets of generating (trivial) cofibrations, we say that $\mathscr{C}$ admits the \emph{transferred semi-model structure} if the hypotheses of \cite[Theorem 12.1.4]{fresse_2009_modules_operads_functors} are satisfied.

We consider the adjoint pair
\begin{equation}\label{underlying_bicomplex}
    \bichain\mathop{\rightleftarrows}\limits^{\O{O}\circ-}_U\algebras{\bichain}{\O{O}}.
\end{equation}
Here, $U$ is the obvious forgetful functor and $\O{O}\circ-$ denotes the free $\O{O}$-algebra functor. We endow $\bichain$ with the Cartan--Eilenberg combinatorial model structure in \cite[\S4]{muro_roitzheim_2019_homotopy_theory_bicomplexes}, denoted by $\bichainCE$, and with the sets of generating (trivial) cofibrations therein. Here, cofibrant resolutions are Cartan--Eilenberg resolutions and weak equivalences are $E^2$-equivalences, i.e.~bicomplex morphisms which induce an isomorphism in the $E^2$ term of the spectral sequence associated to the filtration by the horizontal degree. We recall the definition of $E^2$-equivalences in a wider context in Definition \ref{minimal_models} below.

A \emph{graded operad} is an operad in $\chain$ with trivial differential.

\begin{proposition}\label{semi-model}
    Let $\O{O}$ be a graded operad such that each $\O{O}(r)$ is $\SS_r$-projective.
    The category $\algebras{\bichainCE}{\O{O}}$ admits the transferred semi-model structure from the Cartan--Eilenberg model structure $\bichainCE$ along \eqref{underlying_bicomplex}. Moreover, the forgetful functor $U\colon\algebras{\bichainCE}{\O{O}}\to\bichainCE$ preserves cofibrant objects and cofibrations with cofibrant source.
\end{proposition}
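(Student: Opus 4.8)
The plan is to verify the hypotheses of the transferred semi-model structure theorem, namely \cite[Theorem 12.1.4]{fresse_2009_modules_operads_functors}, for the adjunction \eqref{underlying_bicomplex}. The standard criterion requires two ingredients: first, that the free $\O{O}$-algebra functor $\O{O}\circ-$ preserves the relevant colimits and that the categories involved are well-behaved (locally presentable, etc.), which follows formally since $\bichainCE$ is a cofibrantly generated combinatorial model category and the forgetful functor $U$ creates all limits and colimits; and second, the key homotopical input, that for every (trivial) cofibration $j$ in $\bichainCE$ and every cofibrant object, pushouts of $\O{O}\circ j$ along arbitrary maps in $\algebras{\bichain}{\O{O}}$ are again weak equivalences (in the trivial case) and that the generated maps have the expected lifting properties.

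First I would recall the explicit form of a pushout of a free map $\O{O}\circ j$ in the category of $\O{O}$-algebras. This is governed by the standard filtration on such pushouts (as in \cite[\S12.2]{fresse_2009_modules_operads_functors} or \cite[\S7]{loday_vallette_2012_algebraic_operads}), whose associated graded pieces are built out of tensor products of the form $\O{O}(r)\otimes_{\SS_r}(\text{terms involving the map } j)$, where the $\SS_r$-action is the diagonal one combining the action on $\O{O}(r)$ and the permutation of tensor factors in $\bichain$. The $\SS_r$-projectivity hypothesis on each $\O{O}(r)$ is precisely what allows us to compute these coinvariants as honest (derived) quotients, so that the functor $\O{O}(r)\otimes_{\SS_r}(-)$ sends $E^2$-equivalences between suitable objects to $E^2$-equivalences. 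Thus the main technical step is to check that the monoidal structure on $\bichainCE$ interacts correctly with the model structure: concretely, that the pushout-product of two cofibrations is a cofibration, and is trivial if one factor is, and that this descends through $\SS_r$-coinvariants under the projectivity assumption.

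The hard part will be establishing this monoidal compatibility at the level of $\bichainCE$, since the weak equivalences there are $E^2$-equivalences rather than quasi-isomorphisms, so the usual arguments for $\chain$ do not transfer verbatim. I would lean on the monoidal properties of the Cartan--Eilenberg model structure established in \cite{muro_roitzheim_2019_homotopy_theory_bicomplexes}, in particular that $\bichainCE$ is a monoidal model category so that the pushout-product axiom holds, and then combine this with the $\SS_r$-projectivity to control the symmetric powers. Given these, the filtration argument shows that each layer of a pushout of $\O{O}\circ j$ is a cofibration (trivial when $j$ is), whence the whole pushout is, and the semi-model structure axioms of \cite[Theorem 12.1.4]{fresse_2009_modules_operads_functors} follow. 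The restriction to cofibrant source in the final assertion is exactly what makes the filtration pieces cofibrant so that the coinvariants compute correctly.

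Finally, for the ``moreover'' clause, I would argue that $U$ preserves cofibrations with cofibrant source by observing that any such cofibration is a retract of a transfinite composite of pushouts of generating cofibrations $\O{O}\circ j$ with $j$ generating in $\bichainCE$; the filtration analysis above shows each such pushout is, on underlying bicomplexes, a cofibration in $\bichainCE$ (this is where cofibrancy of the source enters, guaranteeing the lowest filtration layer is already cofibrant), and cofibrations are closed under retracts and transfinite composition. Preservation of cofibrant objects is the special case where the source is the initial $\O{O}$-algebra, whose underlying bicomplex $\O{O}(0)$ is cofibrant since $\O{O}(0)$ is $\kk$-projective.
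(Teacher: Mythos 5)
Your proposal is correct in substance and rests on the same two inputs as the paper's proof: the compatibility of the Cartan--Eilenberg model structure with the monoidal structure, and the arity-wise $\SS_r$-projectivity of $\O{O}$. The difference is one of packaging: the paper does not redo the pushout filtration and pushout-product analysis you sketch, but instead invokes \cite[Theorem 12.3.A and Proposition 12.3.2]{fresse_2009_modules_operads_functors}, which already encapsulate that entire argument (including the ``moreover'' clause about $U$ preserving cofibrant objects and cofibrations with cofibrant source). The only hypothesis then left to verify is that $\O{O}$ is $\SS$-cofibrant, i.e.\ cofibrant in $\bichainCE^{\SS}$ with the projective model structure; this holds because $\O{O}$ is concentrated in horizontal degree $0$ with trivial differential and each $\O{O}(r)$ is $\SS_r$-projective, hence a retract of a direct sum of free $\SS_r$-modules on the bicomplexes $S^{0,q}$, and the maps $0\rightarrowtail S^{0,q}$ are among the generating cofibrations of $\bichainCE$. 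One small imprecision in your write-up: you ask that pushouts of $\O{O}\circ j$ along \emph{arbitrary} maps be weak equivalences for $j$ a generating trivial cofibration; in general this fails, and the semi-model axioms of \cite[Theorem 12.1.4]{fresse_2009_modules_operads_functors} only require (and only deliver) this over cofibrant objects --- which is exactly why the conclusion is a semi-model structure rather than a full transferred model structure. Your later remarks show you are aware of this, but the opening statement of the plan should be adjusted accordingly.
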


\begin{proof}
    This is just an application of \cite[Theorem 12.3.A and Proposition 12.3.2]{fresse_2009_modules_operads_functors}. We just have to check that $\O{O}$ is $\SS$-cofibrant as an operad in $\bichainCE$, i.e.~its underlying $\SS$-module is cofibrant in $\bichainCE^{\SS}$ with the projective model structure.

    Let $S^{0,q}$ be the bicomplex which is just $\kk$ concentrated in bidegree $(0,q)$, $q\in\Z$. The maps $0\rightarrowtail S^{0,q}$ are some of the generating cofibrations of $\bichainCE$. Hence, any $\SS$-module $M$ in $\bichainCE$ concentrated in horizontal degree $0$ with trivial differential and such that each $M(r)$ is $\SS_r$-projective, $r\geq0$, is cofibrant in $\bichainCE^{\SS}$.
\end{proof}

\begin{remark}\label{is_cartan_eilenberg}
    Cofibrant objects and cofibrant resolutions in semi-model categories behave as in model categories. Given an $\O{O}$-algebra $A$ in $\chain$, a cofibrant resolution of $A$ in $\algebras{\bichainCE}{\O{O}}$ is a Cartan--Eilenberg resolution by Proposition \ref{semi-model} and \cite[\S4]{muro_roitzheim_2019_homotopy_theory_bicomplexes}.
\end{remark}

In some cases, the semi-model structure in Proposition \ref{semi-model} is an honest model category structure, e.g.~when $\O{O}=\O{A}$ is the associative operad, whose algebras are non-unital monoids, or $\O{O}=u\O{A}$ is the unital associative operad, whose algebras are honest (unital) monoids.

\begin{proposition}
    The categories $\algebras{\bichainCE}{\O{A}}$ and $\algebras{\bichainCE}{u\O{A}}$ admit the transferred model structure along \eqref{underlying_bicomplex} from $\bichainCE$.
\end{proposition}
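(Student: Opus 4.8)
The plan is to upgrade the semi-model structure of Proposition \ref{semi-model} to an honest one by exploiting the fact that $\O{A}$- and $u\O{A}$-algebras are exactly the non-unital and unital monoids in the symmetric monoidal category $\bichain$. Since $\O{A}(r)=\kk[\SS_r]$ is $\SS_r$-free, the free-algebra functor is the (reduced) tensor algebra, and pushouts of monoids along free maps are filtered by layers built purely from tensor powers, with no symmetric coinvariants intervening. This is precisely the situation in which the transfer theorem for monoids of Schwede--Shipley (see also \cite{fresse_2009_modules_operads_functors} for the operadic formulation) produces a genuine model structure, the only input beyond Proposition \ref{semi-model} being that $\bichainCE$ is a monoidal model category satisfying the monoid axiom. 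For a general symmetric operad the layers involve $\O{O}(r)\otimes_{\SS_r}(-)$, whose homotopical meaning requires cofibrancy, and this is exactly why one is stuck with a semi-model structure there; the $\SS$-freeness of $\O{A}$ removes the obstruction.

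First I would record that $\bichainCE$ is a monoidal model category with cofibrant unit $S^{0,0}=\kk$, i.e.~that it satisfies the pushout--product axiom; this is established in \cite{muro_roitzheim_2019_homotopy_theory_bicomplexes} and is in any case readily checked on the generating (trivial) cofibrations recalled in the proof of Proposition \ref{semi-model}. It then remains to verify the monoid axiom: for every trivial cofibration $j\colon A\rightarrowtail B$ and every bicomplex $X$, the map $j\otimes X$, together with all its pushouts and transfinite composites in $\bichain$, is an $E^2$-equivalence. This verification is the main obstacle, and the crucial point is that it must hold for an \emph{arbitrary} $X$, not merely a cofibrant one.

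For the single map $j\otimes X$ the argument I would give runs as follows. The cofiber $C=B/A$ of the trivial cofibration $j$ is cofibrant and $E^2$-acyclic; being cofibrant it is degreewise $\kk$-projective with $\kk$-projective vertical homology, so the short exact sequence $0\to A\to B\to C\to 0$ splits degreewise and stays exact after $\otimes X$. Thus $j\otimes X$ is an $E^2$-equivalence if and only if $C\otimes X$ is $E^2$-acyclic. Now degreewise projectivity of $C$ makes the vertical Künneth map an isomorphism with no Tor term, giving $H^{\mathrm v}(C\otimes X)\cong H^{\mathrm v}(C)\otimes H^{\mathrm v}(X)$ as bigraded modules, with horizontal differential induced by $d_h$. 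Since $\kk$-projectivity of the vertical homology of $C$ together with $E^2(C)=0$ forces the nonnegatively graded complex $(H^{\mathrm v}(C),d_h)$ to be a contractible complex of projectives, its tensor product with $(H^{\mathrm v}(X),d_h)$ is again contractible, whence $E^2(C\otimes X)=H^{\mathrm h}(H^{\mathrm v}(C\otimes X))=0$.

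Finally I would promote this to the full monoid axiom. Since $j\otimes X$ is a degreewise split monomorphism (split monomorphisms being preserved by $\otimes X$) with $E^2$-acyclic cofiber $C\otimes X$, any cobase change of it is again a degreewise split monomorphism with the same cofiber, hence again an $E^2$-equivalence; and as vertical and horizontal homology commute with filtered colimits over $\kk$, transfinite composites of such maps remain $E^2$-equivalences. With the monoid axiom in hand the monoid theorem applies and yields the transferred model structures on $\algebras{\bichainCE}{\O{A}}$ and $\algebras{\bichainCE}{u\O{A}}$. I expect the contractibility-of-the-cofiber computation above to be the technical heart of the argument, precisely because it is what permits arbitrary, non-cofibrant tensor factors $X$.
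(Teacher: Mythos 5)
Your strategy coincides with the paper's: the proof given there is a two-citation one-liner, deferring to \cite[Proposition 4.2]{muro_roitzheim_2019_homotopy_theory_bicomplexes} for the fact that $\bichainCE$ is a monoidal model category satisfying the monoid axiom, and to \cite[Theorem 1.2]{muro_2011_homotopy_theory_nonsymmetric} (with the correction \cite{muro_2017_correction_articles_homotopy}) for the transfer to algebras over nonsymmetric operads, of which $\O{A}$ and $u\O{A}$ are the basic examples. What you have written is essentially the content of the first citation unpacked, plus the observation that $\SS$-freeness is what upgrades the semi-model structure to an honest one; that diagnosis is correct and is exactly the division of labour in the paper.

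Two points in your verification deserve attention. First, in the Künneth step you deduce the absence of a Tor term from degreewise projectivity of $C$ together with projectivity of $H^{\mathrm v}(C)$. This is not quite enough over a general $\kk$: the Künneth exact sequence itself already requires the vertical boundaries (or cycles) of $C$ to be flat before one can ask whether the Tor term vanishes. The statement is nevertheless true, because cofibrant objects of $\bichainCE$ are retracts of cell complexes on the generating cofibrations and therefore have $\kk$-projective vertical cycles, boundaries and homology in every bidegree --- this is precisely the Cartan--Eilenberg ``proper projective'' condition from \cite[\S4]{muro_roitzheim_2019_homotopy_theory_bicomplexes} --- so each column of $C$ splits as its homology plus a contractible complex of projectives and the Künneth isomorphism holds on the nose. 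You should invoke this full characterization rather than only degreewise projectivity. Second, for the non-unital case $\O{A}$ the Schwede--Shipley monoid theorem does not apply verbatim; one needs the pushout filtration for algebras over a general nonsymmetric operad, which is the content of \cite[Theorem 1.2]{muro_2011_homotopy_theory_nonsymmetric}. Neither point changes the outcome, but both are where the cited references do real work that your sketch elides.
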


This follows from \cite[Proposition 4.2]{muro_roitzheim_2019_homotopy_theory_bicomplexes} and \cite[Theorem 1.2]{muro_2011_homotopy_theory_nonsymmetric}, see also \cite{muro_2017_correction_articles_homotopy}.

We should mention that related model structures on twisted complexes have been considered in \cite{livernet_whitehouse_ziegenhagen_2020_spectral_sequence_associated,fu_guan_livernet_whitehouse_2020_model_category_structures,cirici_santander_livernet_whitehouse_undefined/ed_model_category_structures}. Weak equivalences are defined in terms of the pages of the spectral sequence of the twisted complex. None of these model structures coincides with any our (semi-)model structures here or in \cite{muro_roitzheim_2019_homotopy_theory_bicomplexes}.

\section{Algebras in bicomplexes and their operads}\label{derived_operads_section}

Let $\O{O}$ be a graded operad. Here we describe $\O{O}$-algebras in $\bichainu$ as operadic algebras in the following category of graded complexes. For the associative operad, this was done in \cite{livernet_roitzheim_whitehouse_2013_derived_infty_algebras}. This will allow us to define minimal models from the Cartan--Eilenberg resolutions given by Proposition \ref{semi-model}.

\begin{definition}
    The category $\grchain$ of \emph{graded complexes} is the product category $\chain^\Z$. An object $X$ is a sequence of complexes
    \[X=(X_{p,*})_{p\in\Z}=(\dots, X_{p,*},X_{p+1,*},\dots ).\]
    The \emph{horizontal}, \emph{vertical}, and \emph{total} degrees, and the \emph{bidegree}, are defined as in Definition \ref{bicomplexes}. The differential $d$ of $X$ preserves the horizontal degree and decreases the vertical degree by one,
    \[d\colon X_{p,q}\To X_{p,q-1}.\]
    A graded complex $X$ is \emph{bounded} if it is concentrated in non-negative horizontal degrees, i.e.~$X_{p,*}=0$ if $p<0$.


    The category $\grchain$ is equipped with a closed symmetric monoidal structure with tensor product defined also as in Definition \ref{bicomplexes}.
\end{definition}

\begin{remark}
    The full inclusion in horizontal degree $0$,
    \[\chain\hookrightarrow\grchain,\]
    strictly preserves tensor products and related constraints.
    An unbounded bicomplex $X$ has an underlying graded complex $(X,d_v)$.
\end{remark}


\begin{definition}
    The quadratic algebra of \emph{dual numbers} $\O{D}=\kk[\epsilon]/(\epsilon^2)$ is the monoid in $\grchain$ generated by $\epsilon$ in bidegree $(-1,0)$ with relation and $\epsilon^2=0$ and differential $d(\epsilon)=0$. We regard it as an operad concentrated in arity $1$.
\end{definition}

\begin{remark}\label{dual_numbers}
    A $\O{D}$-algebra $X$ in $\grchain$ is an unbounded bicomplex. The vertical differential $d_v$ is the graded complex differential of $X$ and $d_h(x)=\epsilon(x)$, $x\in X$.
\end{remark}

Recall the general notion of \emph{operadic distributive law}, see e.g.~\cite[\S8.6.1]{loday_vallette_2012_algebraic_operads}. They are used to construct a new operad from two existing ones. This concept goes back to \cite{beck_1969_distributive_laws}.

\begin{definition}\label{distributive}
    Let $\O{O}$ be a graded operad.
    The operadic distributive law
    \[\varphi\colon \O{D}\circ\O{O}\To\O{O}\circ\O{D}\]
    is the morphism of $\SS$-modules defined as follows for any $\mu\in\O{O}(r)$,
    \begin{align*}
        \varphi(1;\mu)        & =(\mu;1,\dots,1),                                                                                  \\
        \varphi(\epsilon;\mu) & =(-1)^{\abs{\mu}}\sum_{i=1}^{r}(\mu;1,\stackrel{i-1}{\dots},1,\epsilon,1,\stackrel{r-i}{\dots},1).
    \end{align*}

    A \emph{bigraded operad} is an operad ind $\grchain$ with trivial differential. The \emph{derived operad} $\D{\O{O}}$ is the bigraded operad $\O{O}\circ_{\varphi}\O{D}$ with underlying $\SS$-module $\O{O}\circ\O{D}$ and composition
    \begin{center}
        \begin{tikzcd}[column sep = 12mm]
            \gamma_{\D{\O{O}}}\colon\O{O}\circ\O{D}\circ \O{O}\circ\O{D}\ar[r,"\id{}\circ\varphi\circ\id{}"]&\O{O}\circ\O{O}\circ \O{D}\circ\O{D}
            \ar[r,"\gamma_{\O{O}}\circ\gamma_{\O{D}}"]&
            \O{O}\circ\O{D}.
        \end{tikzcd}
    \end{center}
    The operadic unit is $(1;1)$.
\end{definition}

\begin{lemma}\label{distributive_is}
    The morphism $\varphi$ is indeed an operadic distributive law.
\end{lemma}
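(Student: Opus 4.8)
The plan is to verify directly the four axioms defining an operadic distributive law in \cite[\S8.6.1]{loday_vallette_2012_algebraic_operads}: compatibility with the two units $\eta_{\O{D}}$, $\eta_{\O{O}}$ and with the two composition maps $\gamma_{\O{D}}$, $\gamma_{\O{O}}$. The simplification I would exploit throughout is that $\O{D}$ is concentrated in arity $1$ and is generated as a monoid by $\epsilon$, subject only to $\epsilon^2=0$. Consequently every composite $\SS$-module occurring in the axioms is small, $\varphi$ is completely determined by the two displayed formulas, and, by $\SS$-linearity and additivity, it suffices to evaluate each axiom on elements of the shape $(\epsilon^a;\mu)$, $(\epsilon^a;\epsilon^b;\mu)$ and $(\epsilon^a;\nu;\mu_1,\dots,\mu_k)$ with $a,b\in\{0,1\}$.

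The two unit axioms are immediate from the formulas. The unit $1\in\O{D}(1)$ gives $\varphi(1;\mu)=(\mu;1,\dots,1)$, which is precisely the comparison map $\O{O}\circ\eta_{\O{D}}$, so $\eta_{\O{D}}$-compatibility holds on the nose. For $\eta_{\O{O}}$ I would evaluate the second formula on the operadic unit $\id{}\in\O{O}(1)$: as this has arity $1$ and $\abs{\id{}}=0$, the single-term sum yields $\varphi(\epsilon;\id{})=(\id{};\epsilon)$, which agrees with $\eta_{\O{O}}\circ\O{D}$.

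For the $\gamma_{\O{D}}$-axiom, generation of $\O{D}$ by $\epsilon$ with $\epsilon^2=0$ reduces everything to the instance based at $(\epsilon;\epsilon;\mu)$. Composing with $\gamma_{\O{D}}$ first gives $\varphi(\epsilon^2;\mu)=\varphi(0;\mu)=0$. Along the other composite one distributes the inner $\epsilon$ and then the outer one, obtaining a double sum indexed by the slots $i,j$ into which the two copies of $\epsilon$ are pushed. After the final $\O{O}\circ\gamma_{\O{D}}$ the diagonal terms $i=j$ acquire a factor $\epsilon^2=0$ and vanish, while for $i\neq j$ the two orderings cancel in pairs: reassembling the two $\O{D}$-factors into one forces one copy of $\epsilon$ to cross the other, and since $\abs{\epsilon}=-1$ is odd the Koszul sign rule contributes $(-1)^{\abs{\epsilon}\abs{\epsilon}}=-1$. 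Hence both composites vanish. The two prefactors $(-1)^{\abs{\mu}}$ multiply to $1$ and are irrelevant here; a quick check with $\mu\in\O{O}(2)$ already exhibits this cancellation.

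Finally, the $\gamma_{\O{O}}$-axiom encodes that $\epsilon$ acts as an odd derivation for operadic composition in $\O{O}$. Evaluating on $(\epsilon;\nu;\mu_1,\dots,\mu_k)$ and setting $\theta=\gamma_{\O{O}}(\nu;\mu_1,\dots,\mu_k)$, the direct composite produces $(-1)^{\abs{\theta}}\sum_l(\theta;\dots,\epsilon,\dots)$, the sum running over the global inputs $l$ of $\theta$. The other composite pushes $\epsilon$ first through $\nu$ and then through the factor $\mu_a$ it meets, so that $\epsilon$ lands in the same global inputs and the two sums match term by term. The only thing left is to confirm that the accumulated sign equals $(-1)^{\abs{\theta}}=(-1)^{\abs{\nu}+\sum_i\abs{\mu_i}}$; here the prefactors $(-1)^{\abs{\nu}}$ and $(-1)^{\abs{\mu_a}}$ combine with the Koszul signs produced by sliding the odd element $\epsilon$ past the operations $\mu_i$ with $i\neq a$ while flattening the two-fold composite. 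I expect this sign bookkeeping in the last axiom to be the one genuinely delicate point; the unit axioms and the $\gamma_{\O{D}}$-axiom are essentially formal.
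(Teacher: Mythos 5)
Your proposal is correct and follows essentially the same route as the paper's proof: both verify the four axioms of \cite[\S8.6.1]{loday_vallette_2012_algebraic_operads} by direct evaluation on the basis elements $1,\epsilon$ of $\O{D}$, with the unit axioms read off from the defining formulas, the $\gamma_{\O{D}}$-axiom handled by the vanishing of the diagonal terms via $\epsilon^2=0$ together with the pairwise Koszul-sign cancellation of the off-diagonal terms, and the $\gamma_{\O{O}}$-axiom reduced to the derivation property of $\epsilon$ with the sign bookkeeping you describe. The paper simply carries out the sign computation in the last axiom explicitly, which is the one step you leave as an expectation, but the mechanism you identify is exactly the one that makes it work.
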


\begin{proof}
    According to \cite[\S8.6.1]{loday_vallette_2012_algebraic_operads} we have to check that four diagrams, called (i), (ii), (I), and (II), commute. The commutativity of (i) is equivalent to the first formula in Definition \ref{distributive}. For (ii) we use that $\varphi(\epsilon;1)=(1;\epsilon)$ by the second formula.

    In order to check (I) we take an element of $\O{D}\circ\O{O}\circ\O{O}$, that we can denote in either way
    \[
        (\epsilon;(\mu;\nu_{1},\dots, \nu_{r}))
        =
        ((\epsilon;\mu);\nu_{1},\dots, \nu_{r})
    \]
    according to how we associate the triple composite. Here $\nu_i\in\O{O}(s_i)$.
    On the one hand, if we apply the operad composition $\gamma_{\O{O}}\colon \O{O}\circ\O{O}\to \O{O}$ to the part in $\O{O}\circ\O{O}$ and then $\varphi$ we obtain
    \begin{align*}
        \varphi(\epsilon;\mu(\nu_{1},\dots, \nu_{r}) ) & =                                                                                               (-1)^{\abs{\mu}+\sum_{i=1}^{r}\abs{\nu_i}}\sum_{i=1}^{\sum_{j=1}^{r}s_j}\mu(\nu_{1},\dots, \nu_{r})(1,\stackrel{i-1}{\dots},1,\epsilon,1,\dots,1).
    \end{align*}
    On the other hand,
    \begin{multline*}
        (\varphi(\epsilon;\mu);\nu_{1},\dots, \nu_{r})=
        (-1)^{\abs{\mu}}\sum_{i=1}^{r}((\mu;1,\stackrel{i-1}{\dots},1,\epsilon,1,\stackrel{r-i}{\dots},1);\nu_{1},\dots, \nu_{r})\\
        =
        \sum_{i=1}^{r}(-1)^{\abs{\mu}+\sum_{j=1}^{i-1}\abs{\nu_j}}(\mu;(1;\nu_1),\dots,(1;\nu_{i-1}),(\epsilon;\nu_i),(1;\nu_{i+1}),\dots,(1;\nu_{r})).
    \end{multline*}
    If we now apply $\varphi$ to all factors in $\O{D}\circ\O{O}$ we obtain
    \begin{multline*}
        \sum_{i=1}^{r}\sum_{j=1}^{s_i}(-1)^{\abs{\mu}+\sum_{j=1}^{i}\abs{\nu_j}}(\mu;(\nu_1;1,\dots,1),\dots,(\nu_i;1,\stackrel{j-1}{\dots},\epsilon,\dots,1),\dots,(\nu_{r};1,\dots,1))\\
        =(-1)^{\abs{\mu}+\sum_{i=1}^{r}\abs{\nu_i}}\sum_{i=1}^{\sum_{j=1}^{r}s_j}((\mu;\nu_{1},\dots, \nu_{r});(1,\stackrel{i-1}{\dots},1,\epsilon,1,\dots,1)).
    \end{multline*}
    If we now apply $\gamma_{\O{O}}$ we obtain the same result as above.

    Finally, for (II) we consider
    \[(\epsilon;(\epsilon;\mu))=((\epsilon;\epsilon);\mu).\]
    I we apply the operad composition $\gamma_{\O{D}}$ of $\O{D}$ we obtain $0$ because $\epsilon^2=0$. Moreover,
    \begin{align*}
        (\epsilon;\varphi(\epsilon;\mu)) & =(-1)^{\abs{\mu}}\sum_{i=1}^{r}(\epsilon;(\mu;1,\stackrel{i-1}{\dots},\epsilon,\dots,1))  \\
                                         & =(-1)^{\abs{\mu}}\sum_{i=1}^{r}((\epsilon;\mu);1,\stackrel{i-1}{\dots},\epsilon,\dots,1),
    \end{align*}
    and
    \begin{multline*}
        (-1)^{\abs{\mu}}\sum_{i=1}^{r}(\varphi(\epsilon;\mu);1,\stackrel{i-1}{\dots},\epsilon,\dots,1)
        =\sum_{i,j=1}^{r}((\mu;1,\stackrel{j-1}{\dots},\epsilon,\dots,1);1,\stackrel{i-1}{\dots},\epsilon,\dots,1)\\
        =\sum_{1\leq j<i\leq n} (\mu;(1;1),\dots,\underbrace{(\epsilon;1)}_{j \text{ slot}},\dots,\underbrace{(1;\epsilon)}_{i \text{ slot}},\dots,(1;1))\\
        +(\mu;(1;1),\dots,(\epsilon;\epsilon),\dots,(1;1))\\
        -\sum_{1\leq i<j\leq n} (\mu;(1;1),\dots,\underbrace{(1;\epsilon)}_{i \text{ slot}},\dots,\underbrace{(\epsilon;1)}_{j \text{ slot}},\dots,(1;1)).
    \end{multline*}
    If we now apply $\gamma_{\O{D}}$ and use that $\epsilon^2=0$ the result is again
    \[\sum_{1\leq j<i\leq n} (\mu; 1,\dots, \overset{j}{\epsilon},\dots, \overset{i}{\epsilon},\dots, 1)
        -\sum_{1\leq i<j\leq n} (\mu; 1,\dots, \overset{i}{\epsilon},\dots, \overset{j}{\epsilon},\dots, 1)=0.\]
\end{proof}


\begin{proposition}\label{dO-algebra}
    A $\D{\O{O}}$-algebra in $\grchain$ is the same as an $\O{O}$-algebra in the category of unbounded bicomplexes.
\end{proposition}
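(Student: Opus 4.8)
The plan is to deduce this from the general correspondence between algebras over an operad built from a distributive law and pairs of compatible algebra structures. By Lemma \ref{distributive_is} and \cite[\S8.6.1]{loday_vallette_2012_algebraic_operads}, $\D{\O{O}}=\O{O}\circ_{\varphi}\O{D}$ is a genuine operad in $\grchain$, and the standard consequence of having a distributive law $\varphi\colon\O{D}\circ\O{O}\to\O{O}\circ\O{D}$ is that a $\D{\O{O}}$-algebra $X$ in $\grchain$ is the same as an object of $\grchain$ equipped simultaneously with an $\O{O}$-algebra structure and a $\O{D}$-algebra structure, subject to the single compatibility that the two ways of acting $\O{D}\circ\O{O}\circ X\rightrightarrows X$---act by $\O{O}$ and then by $\O{D}$, versus transport along $\varphi$ and then act by $\O{D}$ and then by $\O{O}$---agree. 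I would first record this reduction, so that it remains only to interpret the three pieces of data.

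Next I would translate the two algebra structures separately. A $\O{D}$-algebra structure on $X$ in $\grchain$ is, by Remark \ref{dual_numbers}, exactly the structure of an unbounded bicomplex: the graded-complex differential becomes the vertical differential $d_v$, the action of $\epsilon$ becomes the horizontal differential $d_h$, the relation $\epsilon^2=0$ gives $d_h^2=0$, and the Koszul sign attached to $\abs{\epsilon}=-1$ turns the chain-map condition for $\epsilon$ into the anticommutation $d_hd_v+d_vd_h=0$. An $\O{O}$-algebra structure on $X$ in $\grchain$ provides operations $\mu\colon X^{\otimes r}\to X$ which, being morphisms of graded complexes, automatically satisfy the Koszul-signed Leibniz rule with respect to $d_v$; the relation $\varphi(1;\mu)=(\mu;1,\dots,1)$ merely expresses that these operations embed into $\D{\O{O}}$ in the evident way and imposes nothing further.

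The heart of the argument is to show that the $\varphi$-compatibility is exactly the Leibniz rule for $d_h$. Unwinding the two composites on the summand indexed by $\mu\in\O{O}(r)$: acting by $\O{O}$ and then by $\O{D}$ produces $d_h(\mu(x_1,\dots,x_r))$, while transporting along $\varphi$ using $\varphi(\epsilon;\mu)=(-1)^{\abs{\mu}}\sum_i(\mu;1,\dots,\epsilon,\dots,1)$ and then acting produces $(-1)^{\abs{\mu}}\sum_i(-1)^{\abs{x_1}+\cdots+\abs{x_{i-1}}}\mu(x_1,\dots,d_h(x_i),\dots,x_r)$. Their equality is precisely the statement that $\mu$ commutes with the horizontal differentials of $X^{\otimes r}$ and $X$ up to the Koszul sign $(-1)^{\abs{\mu}}$, i.e.~that each $\mu$ is a morphism of bicomplexes in the horizontal direction. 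Combined with the already-automatic compatibility with $d_v$, this says exactly that the operations $\mu$ are morphisms in $\bichainu$, so that $X$ together with them is an $\O{O}$-algebra in $\bichainu$. Every step is reversible, and morphisms on either side are precisely the maps of underlying objects preserving all operations and both differentials, so this yields the desired isomorphism of categories.

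I expect the main obstacle to be the sign bookkeeping in this last step: one must verify that the global sign $(-1)^{\abs{\mu}}$ from $\varphi(\epsilon;\mu)$ together with the slot-dependent signs produced when $\varphi$ moves $\epsilon$ past the inputs (exactly as in the computation of diagram (I) in Lemma \ref{distributive_is}) reproduces the Koszul signs appearing in the horizontal differential of the tensor product $X^{\otimes r}$ from Definition \ref{bicomplexes}, and that nothing beyond the Leibniz rule is imposed. Once the signs are pinned down the correspondence is a routine, if slightly tedious, verification.
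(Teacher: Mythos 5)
Your proposal is correct and follows essentially the same route as the paper: identify the $\O{D}$-action as the horizontal differential via Remark \ref{dual_numbers}, the $\O{O}$-action as the operations, and unwind the second formula of Definition \ref{distributive} to obtain the Koszul-signed Leibniz rule $d_h(\mu(x_1,\dots,x_r))=\sum_{s}(-1)^{\abs{\mu}+\sum_{t<s}\abs{x_t}}\mu(x_1,\dots,d_h(x_s),\dots,x_r)$, with matching signs. The only cosmetic difference is that you invoke the general ``pair of compatible structures'' description of algebras over a distributive-law operad, whereas the paper gets the two substructures directly from the operad morphisms $\O{O}\to\O{O}\circ_\varphi\O{D}\leftarrow\O{D}$ induced by the units; the content is the same.
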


\begin{proof}
    Let $\unit$ be the initial operad in $\grchain$, which is the monoidal unit for the circle product $\circ$ in $\grchain^{\SS}$, and let $\eta_{\O{P}}\colon\unit\to\O{P}$ be the unit of a generic operad $\O{P}$ in $\grchain$.

    We have operad morphisms induced by the units of $\O{O}$ and $\O{D}$, respectively,
    \begin{center}
        \begin{tikzcd}[column sep = 14mm]
            \O{O}\cong
            \O{O}\circ\unit
            \ar[r,"\id{\O{O}}\circ\eta_{\O{D}}"]&
            \O{O}\circ_{\varphi}\O{D}&
            \unit\circ\O{D}\cong\O{D}\ar[l,"\eta_{\O{O}}\circ\id{\O{O}}"'].
        \end{tikzcd}
    \end{center}
    Therefore, any $\D{\O{O}}$-algebra $A$ is an $\O{O}$-algebra in $\grchain$ as well as a bicomplex, see Remark \ref{dual_numbers}.

    The second equation of Definition \ref{distributive} implies
    \[d_h(\mu(x_1,\dots,x_r))=\sum_{s=1}^r(-1)^{\abs{\mu}+\sum_{t=1}^{s-1}\abs{x_t}}\mu(x_1,\dots, d_h(x_s),\dots,x_r),\]
    This shows that $A$ is not only an $\O{O}$-algebra as a graded complex but as an unbounded bicomplex. The converse is essentially the same.
\end{proof}

Recall that the \emph{quadratic operad} $\P{E}{R}$ generated by a \textit{reduced} $\SS$-module $E$, i.e.~$E(0)=0$, is the quotient $\F(E)/(R)$ of the free operad $\F(E)$ generated by $E$ by a given sub-$\SS$-module $R\subset E\circ_{(1)}E=\F(E)^{(2)}$ of weight $2$ relations, see \cite[\S5.2.5]{fresse_2004_koszul_duality_operads} and \cite[\S7.1.2]{loday_vallette_2012_algebraic_operads}.

\begin{proposition}\label{derived_presentation}
    The derived operad of a reduced quadratic operad $\O{O}=\P{E}{R}$ is quadratic and has the following presentation
    \[\D{\O{O}}=\P{E\oplus\kk\cdot\epsilon}{R\oplus D\oplus \kk\cdot\epsilon^2}.\]
    Here $D\subset  \kk\cdot\epsilon\circ_{(1)}E\oplus E\circ_{(1)}\kk\cdot\epsilon$ is the sub-$\SS$-module defined as
    \[D(r)=\Big\{\epsilon\circ_1\mu-(-1)^{\abs{\mu}}\sum_{i=1}^r\mu\circ_i\epsilon;\quad \mu\in E(r)\Big\}.\]
\end{proposition}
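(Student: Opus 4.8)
The plan is to exploit the operadic distributive law $\varphi$ established in Lemma \ref{distributive_is}, together with the general fact that a distributive law $\O{D}\circ\O{O}\to\O{O}\circ\O{D}$ between two quadratic operads produces a quadratic operad whose presentation combines the generators and relations of the two factors with a mixed relation encoding the distributive law. Concretely, I would first record that $\O{O}=\P{E}{R}$ and that $\O{D}=\kk[\epsilon]/(\epsilon^2)$ is itself quadratic, generated by the arity-one, bidegree-$(-1,0)$ element $\epsilon$ subject to the weight-$2$ relation $\epsilon^2=0$; that is, $\O{D}=\P{\kk\cdot\epsilon}{\kk\cdot\epsilon^2}$. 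Thus the candidate presentation $\P{E\oplus\kk\cdot\epsilon}{R\oplus D\oplus\kk\cdot\epsilon^2}$ has exactly the expected shape: the generators are the disjoint union of the two generating $\SS$-modules, and the relations are those of $\O{O}$, those of $\O{D}$, plus a mixed piece $D$ that lives in the weight-$2$ component $\kk\cdot\epsilon\circ_{(1)}E\oplus E\circ_{(1)}\kk\cdot\epsilon$.

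Next I would identify the mixed relation $D$ with the graph of $\varphi$ restricted to generators. The second formula in Definition \ref{distributive} reads $\varphi(\epsilon;\mu)=(-1)^{\abs{\mu}}\sum_{i=1}^r(\mu;1,\dots,\epsilon,\dots,1)$, i.e. $\varphi(\epsilon\circ_1\mu)=(-1)^{\abs{\mu}}\sum_{i=1}^r\mu\circ_i\epsilon$ for $\mu\in E(r)$. Hence the generating set of $D(r)$ is precisely $\{\epsilon\circ_1\mu-\varphi(\epsilon\circ_1\mu)\}$, the "straightening" relations that let one push every $\epsilon$ past every operation of $\O{O}$, turning an arbitrary word into a normal form in $\O{O}\circ\O{D}$ (an $\O{O}$-operation with $\epsilon$-factors plugged into its inputs). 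This is the standard mechanism by which a distributive law yields, on the level of underlying $\SS$-modules, the isomorphism $\O{O}\circ_\varphi\O{D}\cong\O{O}\circ\O{D}$ that is built into Definition \ref{distributive}.

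The core of the argument is then to promote this to an isomorphism of operads $\P{E\oplus\kk\cdot\epsilon}{R\oplus D\oplus\kk\cdot\epsilon^2}\xrightarrow{\ \cong\ }\D{\O{O}}$. I would construct the natural map in the direction sending generators to generators: the inclusions of $\O{O}$ and $\O{D}$ into $\D{\O{O}}$ from Proposition \ref{dO-algebra} assemble into an operad morphism out of the free operad $\F(E\oplus\kk\cdot\epsilon)$, and one checks that $R$, $D$, and $\kk\cdot\epsilon^2$ all map to zero in $\D{\O{O}}$ — for $R$ and $\kk\cdot\epsilon^2$ because the two factor operads embed, and for $D$ by the very definition of $\gamma_{\D{\O{O}}}$ via $\varphi$. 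This yields a well-defined surjective operad morphism, surjectivity being clear since the images of $E$ and $\epsilon$ generate. For injectivity I would argue that the straightening relations in $D$ already suffice to rewrite every element of the presented operad into the normal form $\O{O}\circ\O{D}$, giving a surjection from $\O{O}\circ\O{D}$ onto the presented operad; composing with the isomorphism $\O{O}\circ\O{D}\cong\D{\O{O}}$ shows the two surjections are mutually inverse. This is essentially the content of \cite[\S8.6]{loday_vallette_2012_algebraic_operads} on distributive laws, which guarantees $\O{O}\circ_\varphi\O{D}\cong\P{E\oplus\kk\cdot\epsilon}{R\oplus D\oplus\kk\cdot\epsilon^2}$ whenever $\varphi$ respects the quadratic data.

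The step I expect to be the main obstacle is verifying that the mixed relations $D$ are \emph{exactly} of weight $2$ and that no further relations are forced — in other words, checking the hypotheses of the distributive-law-for-quadratic-operads result rather than merely invoking it. One must confirm that $D$ lands in the correct weight-$2$ summand (it does, since $\epsilon$ and each $\mu\in E$ have weight $1$ and $D$ couples one of each), and that the associativity coherence diagrams (I) and (II) verified in Lemma \ref{distributive_is} are precisely what is needed for the quadratic data $(E\oplus\kk\cdot\epsilon,\,R\oplus D\oplus\kk\cdot\epsilon^2)$ to define an operad isomorphic to $\O{O}\circ_\varphi\O{D}$. The bookkeeping of signs coming from $\abs{\mu}$ in the distributive law must be carried consistently into the relation $D$; I would double-check that the sign $(-1)^{\abs{\mu}}$ appearing in Proposition \ref{derived_presentation} matches the one dictated by $\varphi$, which it does by construction.
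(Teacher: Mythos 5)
Your proposal is correct and takes essentially the same route as the paper: an operad morphism from the presented operad to $\D{\O{O}}$ defined on generators, an $\SS$-module surjection $\O{O}\circ\O{D}\to\P{E\oplus\kk\cdot\epsilon}{R\oplus D\oplus \kk\cdot\epsilon^2}$ given by the normal form that the relations $D$ enforce, and the observation that the composite $\O{O}\circ\O{D}\to\O{P}\to\D{\O{O}}$ is the identity on the underlying $\SS$-module, which forces both maps to be isomorphisms. The one thing to drop is the appeal to \cite[\S 8.6]{loday_vallette_2012_algebraic_operads} as a general guarantee that a distributive law yields this quadratic presentation: the paper warns in Remark \ref{loday_vallette_gap} that the proof of \cite[Theorem 8.6.4]{loday_vallette_2012_algebraic_operads}, on which the presentation statements of that section rest, contains a gap, but your direct argument does not need that citation.
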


\begin{proof}
    Let $\O{P}$ be the operad defined by the presentation in the statement.
    On the one hand, there is an operad morphism $\O{P}\to\D{\O{O}}$ defined on generators by the inclusions
    \[E\cong E\circ\unit\To \O{O}\circ_{\varphi}\O{D}\longleftarrow\unit\circ\kk\cdot\epsilon\cong\kk\cdot\epsilon.\]
    These arrows are given by the inclusions of generators $E\subset\O{O}$ and $\kk\cdot\epsilon\subset\O{D}$ and the corresponding operadic units.

    On the other hand, it is easy to check that
    \[(\O{O}\circ\O{D})(r)=\O{O}(r)\otimes\O{D}^{\otimes^r}\]
    as $\SS_r$-modules, with the permutation action on the tensor power and the diagonal action on the tensor product. Hence, there is an $\SS$-module morphism
    \begin{align*}
        \O{O}\circ\O{D}                           & \To\O{P},   \\
        (\mu;\epsilon^{i_1},\dots,\epsilon^{i_r}) & \;\mapsto\;
        \mu(\epsilon^{i_1},\dots,\epsilon^{i_r}).
    \end{align*}
    Here, each $i_j\in\{0,1\}$. This morphism is surjective because of the relations in $D$.

    The composite
    \[\O{O}\circ\O{D}\To\O{P}\To\D{\O{O}}\]
    is clearly the identity on the underlying $\SS$-module $\O{O}\circ\O{D}$. Since the first one is surjective, both are isomorphisms.
\end{proof}

\section{Koszul duality}\label{koszul}


We here prove that, if $\O{O}$ is a Koszul graded operad, then its derived operad $\D{\O{O}}$ from Definition \ref{distributive} is also Koszul. Moreover, we compute its Koszul dual $\K{\D{\O{O}}}$ in terms of the Koszul dual $\K{\O{O}}$ of $\O{O}$. We will use it in the next section to compute the structure of a minimal model of a differential graded $\O{O}$-algebra arising from a Cartan--Eilenberg resolution.

We always consider cooperads with respect to the usual composite $\circ$ of $\SS$-modules since all of them are reduced in this paper, see \cite[\S5.1.15]{loday_vallette_2012_algebraic_operads} and \cite[Proposition 1.1.15]{fresse_2000_homotopy_simplicial_algebras}.

Given a graded operad $\O{O}$ and a graded cooperad $\O{C}$, a \emph{twisting morphism} $\tm\colon\O{C}\to\O{O}$, called twisting cochain in \cite[\S4.5]{fresse_2004_koszul_duality_operads}, is a degree $-1$ morphism of $\SS$-modules such that the composite
\begin{center}
    \begin{tikzcd}
        \tm\star\tm\colon \O{C}\ar[r,"\Delta_{(1)}"]&
        \O{C}\circ_{(1)}\O{C}\ar[r,"\tm\circ_{(1)}\tm"]&
        \O{O}\circ_{(1)}\O{O}\ar[r,"\gamma_{(1)}"]&
        \O{O}
    \end{tikzcd}
\end{center}
vanishes, see \cite[\S6.4]{loday_vallette_2012_algebraic_operads}. Here $\Delta_{(1)}$ is the infinitesimal decomposition of the cooperad $\O{C}$ and $\gamma_{(1)}$ is the infinitesimal composition of the operad $\O{O}$, see \cite[\S6.1.4, \S6.1.2]{loday_vallette_2012_algebraic_operads}.

We will denote the decomposition $\Delta$ of a cooperad $\O{C}$ by using a Sweedler notation like in \cite[\S5.8.1]{loday_vallette_2012_algebraic_operads},
\begin{equation}\label{sweedler_decomposition}
    \Delta\colon\K{\O{O}}\To \K{\O{O}}\circ\K{\O{O}},\qquad
    \Delta(\mu)=\sum_{[\mu]}(\nu;\nu^{1},\dots,\nu^{l})\cdot\tau.
\end{equation}

The \emph{(left) twisted composite product} $\O{C}\circ_\tm\O{O}$ \cite[\S6.4.5]{loday_vallette_2012_algebraic_operads} is the $\SS$-module $\O{C}\circ\O{O}$ equipped with the following differential
\begin{multline}\label{twisted_complex_differential}
    d(\rho;\mu_1,\dots,\mu_r)=\sum_{i=1}^r \sum_{[\mu]}(-1)^{\abs{\rho}+\abs{\nu_i}\sum_{j=1}^{i-1}\abs{\mu_j}}(\rho\circ_i\tm(\nu_i);\dots,\mu_{i-1},\nu_i^1,\dots\\\dots,\nu_i^l,\mu_{i+1},\dots)\cdot (\id{}\times \tau\times \id{}).
\end{multline}
Here the two $\id{}$'s are identity permutations, the first one on the sum of the arities of the $\mu_k$ for $k<i$, and similarly the second one for $k>i$. In terms of labeled trees, this differential can be better described as
\begin{center}
    \begin{tikzpicture}[grow'=up,sibling distance=10mm, level distance=10mm]
        \node [draw=none] {} child{ node {$\mathcal{O}$}
                child{ node {$\mathcal{C}$} }
                child{ node (A) {$\mathcal{C}$} }
                child{ node {$\mathcal{C}$} }
            };
        \node at (3,0) [draw=none] {} child{ node {$\mathcal{O}$}
                child{ node {$\mathcal{C}$} }
                child{ node (a) {$\mathcal{C}$}
                        child{ node (b) {$\mathcal{C}$} }
                        child{ node (c) {$\mathcal{C}$} }
                    }
                child{ node {$\mathcal{C}$} }
            };
        \node at (6,0) [draw=none] {} child{ node (d) {$\mathcal{O}$}
                child{ node {$\mathcal{C}$} }
                child{ node (e) {$\mathcal{O}$}
                        child{ node{$\mathcal{C}$} }
                        child{ node {$\mathcal{C}$} }
                    }
                child{ node {$\mathcal{C}$} }
            };
        \node at (9.5,0) [draw=none] {} child{ node (D) {$\mathcal{O}$}
                child{ node {$\mathcal{C}$} }
                child{ node{$\mathcal{C}$} }
                child{ node {$\mathcal{C}$} }
                child{ node {$\mathcal{C}$} }
            };
        \node [draw=gray, thick, rounded corners, inner sep=-1pt, fit=(a) (b) (c)] (B) {};
        \node [draw=gray, thick, rounded corners, inner sep=0pt, fit=(d) (e)] (C) {};
        \draw[->, draw=gray, thick] (A) edge [bend left=5mm] node [above] {$\scriptstyle \Delta$} (B);
        \draw[->, draw=gray, thick] (a) edge [bend left=5mm] node [above] {$\scriptstyle \tm$} ($(e)+(-.1,.1)$);
        \draw[->, draw=gray, thick] (C) edge [bend right=5mm] node [above] {$\scriptstyle \gamma_{(1)}$} (D);
    \end{tikzpicture}
\end{center}
Beware that the formula for the differential of $\O{O}\circ_{\zeta}\O{C}$ in \cite[\S4.5.1]{fresse_2004_koszul_duality_operads} is not fully correct as written (it does not even have degree $-1$). Here, we have used the correct definition from \cite[\S6.4.5]{loday_vallette_2012_algebraic_operads}. We also borrow the tree notation notation from \cite[6.4.5]{loday_vallette_2012_algebraic_operads}, where it is used to describe the differential of the right twisted composite product $\O{C}\circ_{\zeta}\O{O}$ which we do not use here.

Suppose both $\O{O}$ and $\O{C}$ are weighted, \emph{reduced}, and \emph{connected}, i.e.~their weight $0$ part is $\unit$, e.g.~quadratic (co)operads generated by a reduced $\SS$-module. Then we ask $\tm$ to be weight-preserving and we say that it is \emph{Koszul} if the twisted composite product $\O{O}\circ_\tm\O{C}$ is acyclic in positive weight. It is nevertheless customary to just say that it is \emph{acyclic} since in weight zero it is always $\unit\circ\unit\cong\unit$, which is obviously not acyclic.

The \emph{suspension} $\suspension X$ of a chain complex is defined by shifting one degree up and changing the sign of the differential. Equivalently, $\suspension$ is defined as a functor by the existence of a natural degree $1$ isomorphism $\suspension\colon X\to \suspension X$.

The \emph{Koszul dual cooperad} $\K{\O{O}}=\C{sE}{s^2R}$ of a reduced quadratic operad $\P{E}{R}$ is cogenerated by $sE$ with corelations $s^2R$. The \emph{canonical twisting morphism} \[\ctm\colon\K{\O{O}}\to\O{O}\] is the composite
\[\K{\O{O}}\twoheadrightarrow sE\stackrel{s^{-1}}{\To}E\hookrightarrow\O{O}\]
given by the the projection onto the cogenerators (the weight $1$ part), the desuspension, and the inclusion of the generators (again the weight $1$ part). The operad $\O{O}$ is \emph{Koszul} if both $\O{O}$ and $\K{\O{O}}$ are $\kk$-projective and $\ctm$ is Koszul, i.e.~the \emph{operadic Koszul complex}, which is the the twisted composite product $\O{O}\circ_{\ctm}\K{\O{O}}$, is acyclic. We will use the following characterization instead.

\begin{lemma}\label{characterization_koszul}
    A reduced quadratic operad $\O{O}$ in $\grmodules$ is Koszul if and only if there exists a weighted, reduced, and connected cooperad $\O{C}$ in $\grmodules$ and a weight preserving twisting morphism $\tm\colon\O{C}\to\O{O}$ such that both $\O{O}$ and $\O{C}$ are $\kk$-projective, $\tm$ is acyclic, and $\tm$ vanishes in weight $\neq 1$. In this case, the Koszul dual cooperad of $\O{O}$ is $\K{\O{O}}=\O{C}$.
\end{lemma}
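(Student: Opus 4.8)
The plan is to prove the two implications separately, the forward one being essentially tautological and the backward one carrying all the content.

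For the \emph{only if} direction, suppose $\O{O}$ is Koszul. Then take $\O{C}=\K{\O{O}}=\C{sE}{s^2R}$ and $\tm=\ctm$. By the definition of Koszulness both $\O{O}$ and $\K{\O{O}}$ are $\kk$-projective and $\ctm$ is acyclic. It remains to observe that $\ctm$ vanishes in weight $\neq 1$: this is immediate from its construction as the composite of the projection $\K{\O{O}}\twoheadrightarrow sE$ onto the weight-$1$ cogenerators, the desuspension, and the inclusion $E\hookrightarrow\O{O}$ of the weight-$1$ generators. So all the listed properties hold, and trivially $\K{\O{O}}=\O{C}$.

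For the \emph{if} direction I would first extract the quadratic data from $\tm$ in low weights. Since $\O{O}$ and $\O{C}$ are graded (no internal differential), the twisting equation reduces to $\tm\star\tm=0$, and since $\tm$ is weight-preserving, of degree $-1$, and concentrated in weight $1$, it is determined by its weight-$1$ component $\tm_1\colon\O{C}^{(1)}\To E$. Restricting the twisted composite product to total weight $1$ yields the two-term complex $\O{C}^{(1)}\xrightarrow{\tm_1}E$, whose acyclicity forces $\tm_1$ to be an isomorphism of degree $-1$; hence $\O{C}^{(1)}\cong sE$ and $\tm_1=s^{-1}$, matching $\ctm$. The cogenerator projection $\O{C}\twoheadrightarrow\O{C}^{(1)}\cong sE$ then induces, by the universal property of the cofree cooperad, a morphism of cooperads $\O{C}\To\F^c(sE)$; the weight-$2$ part of the equation $\tm\star\tm=0$ says precisely that its weight-$2$ image lands in $s^2R$, so it factors through a morphism
\[\Phi\colon\O{C}\To\K{\O{O}}=\C{sE}{s^2R}\]
of connected weight-graded cooperads which is an isomorphism in weights $\le 1$ and satisfies $\ctm\circ\Phi=\tm$.

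The heart of the proof, and the step I expect to be the main obstacle, is upgrading $\Phi$ to an isomorphism in all weights using the acyclicity hypothesis. Because $\ctm\circ\Phi=\tm$, the map $\id{\O{O}}\circ\Phi$ is a morphism of twisted composite products $\O{O}\circ_\tm\O{C}\To\O{O}\circ_{\ctm}\K{\O{O}}$. I would run an induction on the weight $n$, with inductive hypothesis that $\Phi$ is an isomorphism and that $\O{O}\circ_{\ctm}\K{\O{O}}$ is acyclic in all positive weights below $n$. Filtering each twisted complex by the total weight of its cooperad factors, the connectivity of $\O{C}$ and $\K{\O{O}}$ shows that in total weight $n$ the comparison map is an isomorphism on the subcomplex of cooperadic weight $\le n-1$ (there each factor has weight $<n$, where $\Phi$ is already invertible) and equals $\Phi^{(n)}$ on the top quotient $\O{C}^{(n)}\To\K{\O{O}}^{(n)}$. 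A mapping-cone computation then identifies the homology of $\O{O}\circ_{\ctm}\K{\O{O}}$ in weight $n$ with that of the cone of $\Phi^{(n)}$, that is, with $\ker\Phi^{(n)}$ and $\operatorname{coker}\Phi^{(n)}$. The delicate point is to conclude from here that $\Phi^{(n)}$ is bijective: this rules out spurious cogenerators or corelations of $\O{C}$ in weight $n$, and it is exactly where the acyclicity of $\O{O}\circ_\tm\O{C}$ is indispensable. Once $\Phi$ is an isomorphism it identifies $\O{C}$ with $\K{\O{O}}$ compatibly with the twisting morphisms, whence $\O{O}\circ_{\ctm}\K{\O{O}}\cong\O{O}\circ_\tm\O{C}$ is acyclic and $\O{O}$ is Koszul with $\K{\O{O}}=\O{C}$. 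As an alternative to the bare-hands induction, I would consider invoking the comparison lemma for weight-graded twisting morphisms of \cite[\S6.7]{loday_vallette_2012_algebraic_operads}, which packages precisely this filtered acyclicity argument.
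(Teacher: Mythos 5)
Your strategy for the ``if'' direction is genuinely different from the paper's. The paper passes to the bar construction: the adjoint cooperad morphism $\bar{\tm}\colon\O{C}\to B\O{O}$ is a quasi-isomorphism because $\tm$ is acyclic (the comparison theorem for twisting morphisms), $\O{C}$ has trivial differential and is therefore the homology of $B\O{O}$, the condition that $\tm$ vanishes in weight $\neq 1$ forces $\bar{\tm}$ to land in the maximal-weight (diagonal) part of each bar degree, and Fresse's characterization of Koszulness via the diagonal of $H_*(B\O{O})$ \cite[\S5.2.3]{fresse_2004_koszul_duality_operads} finishes the argument. You instead construct $\Phi\colon\O{C}\to\K{\O{O}}$ by hand and try to prove it is an isomorphism by induction on weight using only the Koszul complex. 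Your ``only if'' direction and your low-weight analysis are correct: acyclicity in weight $1$ does force $\O{C}^{(1)}\cong sE$, and the weight-$2$ component of $\tm\star\tm=0$ does produce $\Phi$ via the universal property of $\C{sE}{s^2R}$.

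The inductive step, however, has a genuine gap exactly at the point you flag as delicate, and the mechanism you propose cannot close it. Your mapping-cone computation is correct but proves the wrong implication: using that $\O{O}\circ_{\tm}\O{C}$ is acyclic and that $\id{}\circ\Phi$ is an isomorphism below the top filtration stage, what you obtain is an isomorphism between $H_*\bigl((\O{O}\circ_{\ctm}\K{\O{O}})^{(n)}\bigr)$ and $\ker\Phi^{(n)}\oplus\operatorname{coker}\Phi^{(n)}$. To deduce that $\Phi^{(n)}$ is bijective you would need the left-hand side to vanish, i.e.\ acyclicity of the Koszul complex of $\O{O}$ in weight $n$ --- which is the conclusion being proven; the acyclicity of $\O{O}\circ_{\tm}\O{C}$ has already been spent in identifying the two homologies and cannot be invoked a second time. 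The step can be repaired, but only with an extra structural input about $\K{\O{O}}$ that you never use: injectivity of $\Phi^{(n)}$ follows because the top differential of $(\O{O}\circ_{\tm}\O{C})^{(n)}$ is injective (acyclicity at the top filtration stage) and factors through $\Phi^{(n)}$, while surjectivity additionally requires that the component $\K{\O{O}}^{(n)}\to sE\circ\K{\O{O}}$ of the decomposition --- equivalently the top differential of the Koszul complex --- is injective, a conilpotency property of the sub-cooperad $\K{\O{O}}$ of the cofree cooperad on $sE$, not a consequence of your acyclicity hypothesis. Your fallback, the comparison lemma of \cite[\S6.7]{loday_vallette_2012_algebraic_operads}, does not apply in the configuration you describe either: it lets you conclude that $\Phi$ is a quasi-isomorphism only if you already know that the induced map of twisted composite products is one, which again presupposes acyclicity of $\O{O}\circ_{\ctm}\K{\O{O}}$. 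Applied instead against the bar construction, whose twisted composite product with $\O{O}$ is unconditionally acyclic, it becomes the paper's proof.
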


\begin{proof}
    The ``only if" part follows by definition. If we now assume the existence of $\O{C}$ with the required properties then there is an adjoint cooperad morphism $\bar{\tm}\colon\O{C}\to B\O{O}$ \cite[Theorem 6.5.7]{loday_vallette_2012_algebraic_operads}. Here $B\O{O}$ is the bar construction, see \cite[\S3.1.9]{fresse_2004_koszul_duality_operads} and \cite[\S6.5.1]{loday_vallette_2012_algebraic_operads}. The morphism $\bar{\tm}$ is a quasi-isomorphism by the left module version of \cite[Theorem 2.1.13]{loday_vallette_2012_algebraic_operads} and \cite[Lemma 4.7.2]{loday_vallette_2012_algebraic_operads}, see also \cite[Theorem 6.6.1]{loday_vallette_2012_algebraic_operads}. Since $\O{C}$ has trivial differential (it is a cooperad in $\grmodules$), it is the homology of the bar construction $B\O{O}$. Moreover, since $\tm$ vanishes in weights $\neq 1$, $\bar{\tm}$ maps to the maximal weight part of $B\O{O}$ on each bar degree, see \cite[Lemma 5.2.2]{fresse_2004_koszul_duality_operads}. Therefore, the lemma follows, see \cite[\S5.2.3]{fresse_2004_koszul_duality_operads}.
\end{proof}

We can regard a graded complex $X$ as a complex equipped with a splitting
\[X_n=\bigoplus_{p+q=n}X_{p,q}\]
such that $d(X_{p,q})\subset X_{p,q-1}$. In this way, it is easy to see that, if we start with an object in $\grchain$,
most constructions in $\chain$ performed in Koszul duality theory remain in $\grchain$. Therefore, we can freely use notions and results from Koszul duality theory in \cite{fresse_2004_koszul_duality_operads,fresse_2009_operadic_cobar_constructions,loday_vallette_2012_algebraic_operads} in this graded context.

The suspension of a graded complex, regarded as a sequence of complexes, is defined component-wise. This means shifting the vertical degree and changing the sign of the differential. The suspension construction extends to $\SS$-modules in the obvious way.

We would like to use Lemma \ref{characterization_koszul} to show that the derived operad $\D{\O{O}}$ of a reduced quadratic operad $\O{O}=\P{E}{R}$ is Koszul, and compute its Koszul dual $\K{\D{\O{O}}}$. In order to find a candidate for this cooperad, we define the distributive law in Definition \ref{codistributive} below.

\begin{proposition}\label{dual_numbers_koszul}
    The algebra $\O{D}$, regarded as an operad concentrated in arity $1$, is Koszul. Its Koszul dual $\K{\O{D}}=\kk[\delta]$ is the polynomial coalgebra on one generator $\delta$ of bidegree $(-1,1)$ with trivial differential.
\end{proposition}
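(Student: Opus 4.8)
The plan is to verify that $\O{D}=\kk[\epsilon]/(\epsilon^2)$ is Koszul by exhibiting its Koszul dual directly and applying Lemma~\ref{characterization_koszul}. Since $\O{D}$ is concentrated in arity $1$, all the operadic machinery collapses to the case of an augmented associative (graded) algebra, where the circle product $\circ$ becomes ordinary tensor product over $\kk$ and the infinitesimal structure maps are the usual ones. So the strategy reduces to classical Koszul duality for the algebra of dual numbers: the quadratic dual of $\kk[\epsilon]/(\epsilon^2)$ is the divided-power (here polynomial) coalgebra $\kk[\delta]$, and the Koszul complex is acyclic.

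First I would set up the candidate cooperad. Following the suspension conventions, the generator $\epsilon$ of $\O{D}$ has bidegree $(-1,0)$, so its suspension $s\epsilon$ has bidegree $(-1,1)$; I name this $\delta$. Since $\O{D}=\P{\kk\cdot\epsilon}{\kk\cdot\epsilon^2}$ is quadratic with the single relation $\epsilon^2=0$, the Koszul dual cooperad is $\K{\O{D}}=\C{s\kk\cdot\epsilon}{s^2(\kk\cdot\epsilon^2)}$. In arity $1$ this is the quadratic coalgebra cogenerated by $\delta$ with no corelations killing the higher powers, i.e.\ the free (polynomial) coalgebra $\kk[\delta]=\bigoplus_{n\geq 0}\kk\cdot\delta^n$, where $\delta^n$ sits in weight $n$ and bidegree $(-n,n)$. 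Its decomposition is the deconcatenation $\Delta(\delta^n)=\sum_{i+j=n}\delta^i\otimes\delta^j$, and the differential is trivial because $\O{D}$ has trivial differential and suspension preserves this. I would define the twisting morphism $\ctm\colon\K{\O{D}}\to\O{D}$ as the composite projecting onto the weight-$1$ cogenerator $\kk\cdot\delta$, desuspending $\delta\mapsto\epsilon$, and including into $\O{D}$; by construction it vanishes in weights $\neq 1$, and both $\O{D}$ and $\kk[\delta]$ are visibly $\kk$-projective (each graded piece is a free rank-one $\kk$-module).

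The substantive step, and the one I expect to be the main obstacle, is checking that $\ctm$ is acyclic, i.e.\ that the Koszul complex $\O{D}\circ_{\ctm}\K{\O{D}}=\O{D}\otimes\kk[\delta]$ is acyclic in positive weight. Concretely, the weight-$n$ part is spanned by $\epsilon^i\otimes\delta^j$ with $i\in\{0,1\}$ and $i+j=n$, so for $n\geq 1$ it is the two-term complex $\kk\cdot(1\otimes\delta^n)\to\kk\cdot(\epsilon\otimes\delta^{n-1})$. Unwinding the differential \eqref{twisted_complex_differential} in this arity-$1$ situation, $d$ sends $1\otimes\delta^n$ to $\pm\,\epsilon\otimes\delta^{n-1}$ via the infinitesimal decomposition $\delta^n\mapsto\delta\otimes\delta^{n-1}$ followed by applying $\ctm$ to the left factor. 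I would verify this differential is an isomorphism $\kk\xrightarrow{\cong}\kk$ on each positive-weight piece, making the complex acyclic there; the only real care needed is tracking the Koszul signs and the suspension shifts so that the map is indeed $\pm\id{}$ rather than $0$. Once acyclicity is confirmed, Lemma~\ref{characterization_koszul} applies directly and gives that $\O{D}$ is Koszul with $\K{\O{D}}=\kk[\delta]$, completing the proof.
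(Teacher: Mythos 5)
Your proposal is correct and follows essentially the same route as the paper: exhibit the twisting morphism $\kk[\delta]\to\kk[\epsilon]/(\epsilon^2)$ sending $\delta\mapsto\epsilon$ and vanishing in other weights, compute the Koszul complex differential $1\otimes\delta^r\mapsto\pm\,\epsilon\otimes\delta^{r-1}$, and observe that each positive-weight piece is a two-term complex with an isomorphism as differential, hence acyclic. The paper's proof is merely terser (it leaves the identification of $\kk[\delta]$ as the quadratic dual and the appeal to Lemma~\ref{characterization_koszul} implicit), so there is nothing substantive to add.
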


\begin{proof}
    Let $\ctm\colon\kk[\delta]\to\kk[\epsilon]/(\epsilon^2)$ be the morphism of vertical degree $-1$ defined by $\ctm(\delta)=\epsilon$ and $\ctm(\delta^r)=0$ for $r\neq 1$. This is a twisting morphism. The differential of the corresponding Koszul complex $\kk[\epsilon]/(\epsilon^2)\otimes_{\ctm}\kk[\delta]$ is $d(1\otimes\delta^r)=\epsilon\otimes\delta^{r-1}$ for $r>0$, $d(1\otimes 1)=0$, and $d(\epsilon\otimes\delta^r)=0$ for $r\geq 0$. This is clearly acyclic.
\end{proof}





A \emph{cooperadic distributive law} is the dual categorical notion of operadic distributive law. They can be used to construct a new cooperad from two given ones. A \textit{(bi)graded cooperad} is a cooperad in $\operatorname{(Gr)Ch}$ with trivial differential.

\begin{definition}\label{codistributive}
    Let $\O{C}$ be a graded cooperad.
    The cooperadic distributive law
    \[\K{\varphi}\colon \K{\O{D}}\circ\O{C}
        \To \O{C}\circ\K{\O{D}}\]
    is the morphism of $\SS$-modules in $\grchain$ defined as follows for any $\mu\in\O{C}(r)$ and $i\geq 0$,
    \[\K{\varphi}(\delta^i;\mu)=\sum_{j_1+\cdots+j_r=i}(\mu;\delta^{j_1},\dots,\delta^{j_r}).\]

    The \emph{derived cooperad} $\D{\O{O}}$ of $\O{C}$ is the bigraded cooperad $\K{\O{D}}\circ_{\K{\varphi}}\O{C}$, with underlying $\SS$-module $\K{\O{D}}\circ\O{C}$ and decomposition
    \begin{center}
        \begin{tikzcd}[column sep=14mm]
            \K{\O{D}}\circ\O{C}\ar[r,"\Delta_{\K{\O{D}}}\circ\Delta_{\O{C}}"]& \K{\O{D}}\circ\K{\O{D}}\circ\O{C}\circ\O{C}
            \ar[r,"\id{}\circ\K{\varphi}\circ\id{}"]&\K{\O{D}}\circ\O{C}\circ \K{\O{D}}\circ\O{C}.
        \end{tikzcd}
    \end{center}
    If $\varepsilon_{\O{C}}\colon\O{C}\to\unit$ denotes the counit of $\O{C}$, the counit of $\K{\O{D}}\circ_{\K{\varphi}}\O{C}$ is
    $\varepsilon_{\K{\O{D}}}\circ\varepsilon_{\O{C}}$. 
\end{definition}

\begin{lemma}\label{is_codistributive}
    The morphism of $\SS$-modules $\K{\varphi}$ in Definition \ref{codistributive} is indeed a cooperadic distributive law.
\end{lemma}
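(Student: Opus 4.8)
The plan is to verify directly the four coherence diagrams that define a cooperadic distributive law, namely the formal duals of the diagrams (i), (ii), (I), and (II) checked in the proof of Lemma \ref{distributive_is} following \cite[\S8.6.1]{loday_vallette_2012_algebraic_operads}. Reversing all arrows, these express the compatibility of $\K{\varphi}$ with the two counits and with the two decompositions $\Delta_{\K{\O{D}}}$ and $\Delta_{\O{C}}$: diagrams (i) and (ii) dualize to statements about the counits $\varepsilon_{\O{C}}$ and $\varepsilon_{\K{\O{D}}}$, while (I) and (II) dualize to the compatibility of $\K{\varphi}$ with $\Delta_{\O{C}}$ and with $\Delta_{\K{\O{D}}}$ respectively.

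First I would dispose of the counit conditions. Evaluating the defining formula gives $\K{\varphi}(1;\mu)=(\mu;1,\dots,1)$, and for $\mu\in\O{C}(1)$ it gives $\K{\varphi}(\delta^i;\mu)=(\mu;\delta^i)$, so applying $\varepsilon_{\O{C}}$ or $\varepsilon_{\K{\O{D}}}$ to the appropriate factor recovers the identity-like behaviour demanded by the two unit-dual diagrams. These are immediate.

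A key simplification relative to Lemma \ref{distributive_is} is that $\delta$ has bidegree $(-1,1)$, hence total degree $\abs{\delta}=0$: unlike $\epsilon$, which is odd, $\delta$ is even, so no Koszul signs appear when permuting factors. This is already reflected in the sign-free formula for $\K{\varphi}$, and it means the two remaining diagrams reduce to purely combinatorial bijections of index sets. For the compatibility with $\Delta_{\O{C}}$ (dual to (I)), write $\Delta(\mu)=\sum_{[\mu]}(\nu;\nu^{1},\dots,\nu^{l})\cdot\tau$ with $\nu^{m}\in\O{C}(s_m)$ and $r=\sum_m s_m$; both routes of the diagram distribute the total power $i$ over the $r$ leaves of $\mu$, and the equality amounts to the identity that choosing $(j_1,\dots,j_r)$ with $\sum_a j_a=i$ is the same as first splitting $i=k_1+\dots+k_l$ among the blocks $\nu^{1},\dots,\nu^{l}$ and then distributing each $k_m$ within its block, the permutation $\tau$ being transported unchanged. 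For the compatibility with $\Delta_{\K{\O{D}}}$ (dual to (II)), one uses $\Delta_{\K{\O{D}}}(\delta^i)=\sum_{a+b=i}\delta^a\otimes\delta^b$; the two routes reduce to the convolution identity that enumerating pairs of tuples $(a_1,\dots,a_r)$, $(b_1,\dots,b_r)$ with $\sum_c a_c=a$, $\sum_c b_c=b$, $a+b=i$ coincides with enumerating $(j_1,\dots,j_r)$ with $\sum_c j_c=i$ together with a splitting $a_c+b_c=j_c$ for each $c$.

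The main obstacle is bookkeeping rather than mathematics: setting up the dual diagrams correctly and tracking the Sweedler indices and the symmetric-group action $\tau$ through $\K{\varphi}$. Once the sign-freeness coming from $\abs{\delta}=0$ is observed, both decomposition diagrams collapse to the elementary set bijections above, exactly mirroring---and in fact simpler than---the corresponding computations (I) and (II) in Lemma \ref{distributive_is}, where the relation $\epsilon^2=0$ is here replaced by the coassociativity of deconcatenation on $\K{\O{D}}$.
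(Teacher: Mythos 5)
Your proposal is correct and follows essentially the same route as the paper: a direct verification of the four dual diagrams, in which the counit conditions are immediate from the defining formula and the two decomposition compatibilities reduce, sign-free because $\abs{\delta}=0$, to exactly the two combinatorial identities you name (splitting $i$ among blocks and then within blocks for $\Delta_{\O{C}}$, and the convolution identity for $\Delta_{\K{\O{D}}}$). The only discrepancy is cosmetic: the paper attaches the labels (I) and (II) to these two compatibilities in the opposite way from your proposal, which does not affect the argument.
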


\begin{proof}
    We have to check that the categorical duals of the four diagrams in  \cite[\S8.6.1]{loday_vallette_2012_algebraic_operads} commute. These diagrams are called (i), (ii), (I), and (II). By the very definition,
    \begin{align*}
        \K{\varphi}(1;\mu)      & =(\mu;1,\dots,1), &
        \K{\varphi}(\delta^i;1) & =(1;\delta^i).
    \end{align*}
    We choose an element $(\delta^i;\mu)\in(\K{\O{D}}\circ\K{\O{O}})(r)$ and decompose the $\K{\O{O}}$ term, using the Sweedler notation in Remark \ref{infinity_morphisms},
    \begin{align*}
        (\delta^i,\Delta(\mu)) & =\sum_{[\mu]}(\delta^i;(\nu;\nu^{1},\dots,\nu^{l})\cdot\tau)  \\
                               & =\sum_{[\mu]}((\delta^i;\nu);\nu^{1},\dots,\nu^{l})\cdot\tau.
    \end{align*}
    We apply $\K{\varphi}$ to the $\K{\O{D}}\circ\K{\O{O}}$ component
    \begin{align*}
        \sum_{[\mu]}(\K{\varphi}(\delta^i;\nu);\nu^{1},\dots,\nu^{l})\cdot\tau & =
        \sum_{[\mu]}\sum_{j_1+\cdots+j_l=i}((\nu;\delta^{j_1},\dots,\delta^{j_l});\nu^{1},\dots,\nu^{l})\cdot\tau                                                                    \\
                                                                               & =\sum_{[\mu]}\sum_{j_1+\cdots+j_l=i}(\nu;(\delta^{j_1};\nu^1),\dots,(\delta^{j_l};\nu^l))\cdot\tau.
    \end{align*}
    We now apply $\K{\varphi}$ to all factors in $\K{\O{D}}\circ\K{\O{O}}$. If $\nu^j\in\K{\O{O}}(s_j)$, this yields
    \begin{multline*}
        \hspace{-8pt}\sum_{[\mu]}\sum_{j_1+\cdots+j_l=i}\big(\nu;\hspace{-10pt}\sum_{k_{1,1}+\dots+k_{1,s_1}=j_1}\hspace{-16pt}(\nu^1;\delta^{k_{1,1}},\dots,\delta^{k_{1,s_1}}),\dots,\hspace{-16pt}\sum_{k_{l,1}+\dots+k_{l,s_l}=j_l}\hspace{-16pt}(\nu^l;\delta^{k_{l,1}},\dots,\delta^{k_{l,s_l}})\big)\cdot\tau.
    \end{multline*}

    The composite $\O{C}\circ\K{\O{D}}$ is given by
    \[(\O{C}\circ\K{\O{D}})(r)=\O{C}(r)\otimes(\K{\O{D}})^{\otimes^r}\]
    as $\SS_r$-modules, with the permutation action on the tensor power and the diagonal action on the tensor product, like in the proof of Proposition \ref{derived_presentation}.

    Since necessarily $\sum_{j=1}^ls_j=r$, the previous summation coincides with
    \begin{multline*}
        \sum_{[\mu]}\sum_{j_1+\cdots+j_r=i}((\nu;\nu^{1},\dots,\nu^{l});\delta^{j_1},\dots,\delta^{j_r})\cdot\tau\\
        =\sum_{j_1+\cdots+j_r=i}\sum_{[\mu]}((\nu;\nu^{1},\dots,\nu^{l})\cdot\tau;\delta^{j_{\tau(1)}},\dots,\delta^{j_{\tau(r)}}).
    \end{multline*}
    Changing variables, this equals
    \[\sum_{j_1+\cdots+j_r=i}\sum_{[\mu]}((\nu;\nu^{1},\dots,\nu^{l})\cdot\tau;\delta^{j_{1}},\dots,\delta^{j_{r}}).\]
    This is the same as if we first consider
    $\K{\varphi}(\delta^i,\mu)$ in Definition \ref{codistributive} and then decompose $\mu\in\K{\O{O}}(r)$. This proves the dual of (II).

    We can also start by applying the decomposition to the $\K{\O{D}}$ term $\delta^i$,
    \begin{align*}
        (\Delta(\delta^i),\mu) & =\sum_{j+k=i}((\delta^j;\delta^k);\mu)  \\
                               & =\sum_{j+k=i}(\delta^j;(\delta^k;\mu)),
    \end{align*}
    then we apply $\varphi$,
    \begin{align*}
        \sum_{j+k=i}(\delta^j;\varphi(\delta^k;\mu)) & =
        \sum_{j+k=i}\sum_{l_1+\dots+l_r=k}(\delta^j;(\mu;\delta^{l_1},\dots,\delta^{l_r})) \\
                                                     & =
        \sum_{j+k=i}\sum_{l_1+\dots+l_r=k}((\delta^j;\mu);\delta^{l_1},\dots,\delta^{l_r})),
    \end{align*}
    and $\varphi$ again,
    \begin{multline*}
        \sum_{j+k=i}\sum_{l_1+\dots+l_r=k}(\varphi(\delta^j;\mu);\delta^{l_1},\dots,\delta^{l_r})\\
        =\sum_{j+k=i}\sum_{l_1+\dots+l_r=k}\sum_{m_1+\dots+m_r=j}((\mu;\delta^{m_1},\dots,\delta^{m_r});\delta^{l_1},\dots,\delta^{l_r})\\
        =\sum_{j+k=i}\sum_{l_1+\dots+l_r=k}\sum_{m_1+\dots+m_r=j}(\mu;(\delta^{m_1};\delta^{l_1}),\dots,(\delta^{m_r};\delta^{l_r}))\\
        =\sum_{l_1+\dots+l_r+m_1+\dots+m_r=i}(\mu;(\delta^{m_1};\delta^{l_1}),\dots,(\delta^{m_r};\delta^{l_r})).
    \end{multline*}
    This is the same as if we take $\varphi(\delta^i;\mu)$ in Definition \ref{codistributive} and decompose all terms in $\K{\O{D}}$,
    \begin{multline*}
        \sum_{j_1+\cdots+j_r=i}\big(\mu;\sum_{m_1+l_1=j_1}(\delta^{m_1};\delta^{l_1}),\dots,\sum_{m_r+l_r=j_r}(\delta^{m_r};\delta^{l_r})\big)\\
        =\sum_{m_1+l_1+\cdots+m_r+l_r=i}(\mu;(\delta^{m_1};\delta^{l_1}),\dots,(\delta^{m_r};\delta^{l_r})).
    \end{multline*}
    This checks the dual of (I).

    In order to show the commutativity of the dual of (i), we take $\varphi(\delta^i;\mu)$ in Definition \ref{codistributive}. Then we apply the counit $\varepsilon_{\K{\O{D}}}\colon\K{\O{D}}\to\unit$ to the $\K{\O{D}}$ factors, which yields $(\mu;1,\dots,1)$ if $i=0$ and $0$ otherwise. We finally apply the unit isomorphism $\K{\O{O}}\circ\unit\cong\K{\O{O}}$, which yields $\mu$ in the first case. The result is the same as if we first apply the counit $\varepsilon_{\K{\O{D}}}$ to the $\K{\O{D}}$ factor of $(\delta^i;\mu)$, which yields $(1;\mu)$ if $i=0$ and $0$ otherwise, and then the unit isomorphism $\unit\circ\K{\O{O}}\cong\K{\O{O}}$.

    The commutativity of the dual of (ii) is easy because the only $\K{\O{O}}$ term appearing in $\varphi(\delta^i;\mu)$ is $\mu$ itself, see Definition \ref{codistributive}. Therefore, on the one hand, if $\mu$ has positive weight, we obtain zero either if we apply $\varepsilon_{\K{\O{O}}}$ to all $\mu$'s in $\varphi(\delta^i;\mu)$ or if we directly apply it to the only $\mu$ in $(\delta^i;\mu)$. On the other hand, if $\mu=1$ is the operadic unit, then $\varepsilon_{\K{\O{O}}}(1)=1$, $\varphi(\delta^i;1)=(1,\delta^i)$, and both $(\delta^i;1)$ and $(1,\delta^i)$ map to $\delta^i$ via the unit isomorphisms $\K{\O{D}}\circ\unit\cong\K{\O{D}}\cong\unit\circ\K{\O{D}}$.
\end{proof}

We are almost ready to prove the main theorem of this section. Before, we need to recall a spectral sequence comparison result which will be used several times.

\begin{definition}\label{filtered_complex}
    A \emph{filtered complex} is a chain complex $X$ equipped with an increasing filtration $F_*X$ by subcomplexes indexed by the integers,
    \[\cdots\subset F_nX\subset F_{n+1}X\subset\cdots\subset X.\]
    This filtration is \emph{exhaustive} if $X=\bigcup_nF_nX$ and \emph{bounded below} if $F_{N}X=0$ for certain $N$. A \emph{morphism} $f\colon X\to Y$ of filtered complexes is a chain map such that $f(F_nX)\subset F_nY$.
\end{definition}

\begin{remark}\label{spectral_sequence_remark}
    Bounded below filtered complexes are \emph{complete} in the sense of \cite[Definitions 3.1 and 3.8]{mccleary_2001_user_guide_spectral}.

    A filtered complex $X$ has an associated spectral sequence with $E^0$-term
    \[E^0_{p,q}(X)=\frac{F_pX_{p+q}}{F_{p-1}X_{p+q}},\]
    see \cite[\S2.2]{mccleary_2001_user_guide_spectral}.
    A morphism of filtered complexes 
    induces (functorially) a map between the corresponding spectral sequences.

    We will also consider decreasingly filtered cochain complexes in some proofs below, and their associated cohomological spectral sequences.
\end{remark}

\begin{lemma}[{\cite[Theorem 3.9]{mccleary_2001_user_guide_spectral}}]\label{spectral}
    If $f\colon X\to Y$ is a morphism of complete and exhaustive filtered complexes which induces an isomorphism on some page $E^n$ of the corresponding spectral sequences, then $f$ is a quasi-isomorphism.
\end{lemma}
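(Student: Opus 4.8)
The plan is to prove the statement in three stages: first propagate the isomorphism from page $E^n$ to the limit page $E^\infty$, then identify $E^\infty$ with the associated graded of homology by invoking convergence, and finally lift the isomorphism on associated graded back to an isomorphism on homology itself.

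For the first stage I would use naturality of the spectral sequence differentials. Since $f$ is a morphism of filtered complexes, it induces a morphism of the associated spectral sequences (Remark \ref{spectral_sequence_remark}) commuting with each differential $d^r$; thus on page $r$ it is a chain map $(E^r,d^r)\to(E^r,d^r)$. If $f$ is an isomorphism on $E^n$, it is then an isomorphism of complexes in page $n$, so it induces an isomorphism on homology, which is by definition page $n+1$. Iterating this observation shows that $f$ is an isomorphism on $E^r$ for every $r\geq n$, and hence on $E^\infty$.

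For the second stage I would invoke convergence. By hypothesis the filtrations are complete and exhaustive, and by Remark \ref{spectral_sequence_remark} (in our applications they are moreover bounded below) the spectral sequence converges strongly to the homology of its source, so that $E^\infty_{p,q}\cong\operatorname{gr}_pH_{p+q}$ for the induced filtration $F_pH_n=\operatorname{im}(H_n(F_pX)\to H_n(X))$, which is again exhaustive and inherits completeness. Under this identification the first stage yields an isomorphism $\operatorname{gr}_*H_*(X)\to\operatorname{gr}_*H_*(Y)$ induced by $f$.

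The final stage is where I expect the only genuine difficulty: lifting this isomorphism on associated graded to an isomorphism $H_*(X)\to H_*(Y)$. Comparing, for source and target, the short exact sequences $0\to F_{p-1}H_n\to F_pH_n\to\operatorname{gr}_pH_n\to 0$ and applying the five lemma, one runs an induction on the filtration degree $p$: completeness is exactly what makes this work, ensuring on the one hand that the descent for injectivity terminates (with $\bigcap_pF_pH_n=0$) and on the other that the successive approximations used for surjectivity assemble into an actual preimage, while exhaustiveness ($H_n=\bigcup_pF_pH_n$) then upgrades an isomorphism on each $F_pH_n$ to one on $H_n$. When the filtration is bounded below, as in all of our applications, this induction is finite and elementary. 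The point to stress is that this passage from $E^\infty$ to $H_*$ genuinely requires the completeness hypothesis and can fail without it, so completeness is used precisely here and not merely in asserting convergence.
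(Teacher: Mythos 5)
The paper offers no proof of this lemma: it is quoted directly from McCleary \cite[Theorem 3.9]{mccleary_2001_user_guide_spectral}, whose argument goes back to Eilenberg--Moore and Boardman. Your three-stage outline is the standard textbook route, and it is a complete proof whenever the filtrations are bounded below --- which, as Remark \ref{spectral_sequence_remark} notes, is the only case the paper actually uses: there all pages stabilize degreewise, $E^\infty\cong\operatorname{gr}H_*$ holds trivially, and your five-lemma induction is finite.

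Under the stated hypotheses (complete and exhaustive, not necessarily bounded below), however, two of your intermediate assertions are genuine gaps. First, an isomorphism on every finite page $E^r$, $r\geq n$, does not formally give an isomorphism on $E^\infty$: since $E^\infty_{p,q}=Z^\infty_{p,q}/B^\infty_{p,q}$ with $Z^\infty=\bigcap_rZ^r$, one is taking an inverse limit over the pages, and surjectivity can be lost there --- this is exactly where the derived limit of the cycle towers intervenes. Second, and more seriously, the identification $E^\infty_{p,q}\cong\operatorname{gr}_pH_{p+q}$ invoked in your second stage is \emph{not} a consequence of completeness and exhaustiveness alone; these give only conditional convergence in Boardman's sense, and strong convergence requires in addition the vanishing of the derived limit term $RE^\infty$, which can fail. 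The actual proof is arranged to avoid asserting strong convergence for either side separately: one either compares the towers $\operatorname{im}(H_*(F_pX)\to H_*(X))$ directly, using the isomorphism on $E^n$ to identify the limit and derived-limit error terms of source and target so that they cancel, or one reduces to a vanishing statement for the mapping cone and proves that a complete, exhaustive filtered complex with $E^r=0$ is acyclic by the successive-approximation argument you describe --- but applied at the level of cycles and boundaries in the complex itself, not to the already-converged associated graded of homology. So you have located the role of completeness correctly, but placed it one step too late; as written the argument establishes the lemma only under the stronger bounded-below hypothesis.
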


\begin{theorem}\label{main_main}
    If $\O{O}$ is graded Koszul and arity-wise projective over the corresponding symmetric group, then $\D{\O{O}}$ is Koszul and its Koszul dual is the derived cooperad of the Koszul dual of $\O{O}$, $\K{(\D{\O{O}})}=\D{(\K{\O{O}})}$.
\end{theorem}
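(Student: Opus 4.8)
The plan is to apply Lemma~\ref{characterization_koszul} with $\O{C}=\D{(\K{\O{O}})}$, the derived cooperad of the Koszul dual of $\O{O}=\P{E}{R}$. By Lemma~\ref{is_codistributive} this is a cooperad in $\grmodules$, and it is weighted, reduced, and connected: arity-wise it is isomorphic to $\K{\O{O}}(r)\otimes\K{\O{D}}^{\otimes r}$ as in the proof of Lemma~\ref{is_codistributive}, and its weight $0$ part is $\unit$ (forcing the $\K{\O{O}}$-factor to be the counit and every $\delta$-power to be $1$). Both $\D{\O{O}}$ and $\D{(\K{\O{O}})}$ are $\kk$-projective, since arity-wise they are $\O{O}(r)\otimes\O{D}^{\otimes r}$ and $\K{\O{O}}(r)\otimes\K{\O{D}}^{\otimes r}$, where $\O{O}(r)$ and $\K{\O{O}}(r)$ are $\kk$-projective by the hypothesis and by Koszulness of $\O{O}$, while $\O{D}$ and $\K{\O{D}}$ are $\kk$-free. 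It then remains to exhibit a weight-preserving twisting morphism $\tm\colon\D{(\K{\O{O}})}\to\D{\O{O}}$ that vanishes in weight $\neq 1$ and is acyclic; Lemma~\ref{characterization_koszul} will then deliver both assertions simultaneously, namely that $\D{\O{O}}$ is Koszul and that $\K{(\D{\O{O}})}=\D{(\K{\O{O}})}$.

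I set $\tm$ to be zero outside weight $1$. The weight $1$ part of $\D{(\K{\O{O}})}$ is $sE\oplus\kk\cdot\delta$, and there I let $\tm$ act as the canonical twisting morphism $\ctm_{\O{O}}$ on $sE$ (desuspend and include $E\hookrightarrow\O{O}\hookrightarrow\O{O}\circ\O{D}$) and as $\delta\mapsto\epsilon$ on $\kk\cdot\delta$ (included via $\unit\circ\O{D}\hookrightarrow\O{O}\circ\O{D}$). This is weight preserving of degree $-1$ and manifestly vanishes in weight $\neq 1$. Because $\tm$ is concentrated in weight $1$, the star square $\tm\star\tm$ is supported on the weight $2$ part of $\D{(\K{\O{O}})}$, which splits into the three families dual to the relations $R$, $\kk\cdot\epsilon^2$, and $D$ of Proposition~\ref{derived_presentation}. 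On the first two, $\tm\star\tm$ restricts to the twisting morphism conditions $\ctm_{\O{O}}\star\ctm_{\O{O}}=0$ and $\ctm_{\O{D}}\star\ctm_{\O{D}}=0$ (the latter by Proposition~\ref{dual_numbers_koszul}); on the mixed family dual to $D$ the vanishing is a direct computation that unwinds the explicit distributive laws $\varphi$ and $\K{\varphi}$ of Definitions~\ref{distributive} and~\ref{codistributive}. Hence $\tm$ is a twisting morphism.

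For acyclicity I must show the operadic Koszul complex $\D{\O{O}}\circ_\tm\D{(\K{\O{O}})}$ is acyclic in positive weight. By associativity of $\circ$ and since $\O{D}$, $\K{\O{D}}$ are concentrated in arity $1$, its underlying $\SS$-module is $\O{O}\circ(\O{D}\circ\K{\O{D}})\circ\K{\O{O}}$: an operation $\mu\in\O{O}(r)$ at the root, on each of its $r$ edges a pair $(\epsilon^{i},\delta^{k})$ spanning the dual-numbers Koszul complex $\O{D}\circ_{\ctm_{\O{D}}}\K{\O{D}}$, and a cooperation of $\K{\O{O}}$ grafted below. The twisted differential splits as $d_{\O{D}}+d_{\O{O}}$ according to the two summands of $\tm$: $d_{\O{D}}$ acts as the dual-numbers Koszul differential on each edge, while $d_{\O{O}}$ transfers a cogenerator of $\K{\O{O}}$ into the root $\O{O}$. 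I filter by the total weight of the $\K{\O{O}}$-factor. Since $d_{\O{D}}$ preserves this weight and $d_{\O{O}}$ lowers it by exactly one, the $E^0$-differential is $d_{\O{D}}$. By Proposition~\ref{dual_numbers_koszul} each edge complex is acyclic except for $\kk$ in weight $0$, so $H(E^0,d_{\O{D}})$ collapses every edge to its trivial class and yields $E^1=\O{O}\circ\K{\O{O}}$ with induced differential the $\O{O}$-Koszul differential. Thus $E^1$ is the operadic Koszul complex $\O{O}\circ_{\ctm_{\O{O}}}\K{\O{O}}$, whose homology vanishes in positive weight because $\O{O}$ is Koszul, so $E^2=0$ there. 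The filtration is bounded below and exhaustive (finite in each fixed arity, weight, and bidegree), hence complete, so Lemma~\ref{spectral} applied to $0\to\D{\O{O}}\circ_\tm\D{(\K{\O{O}})}$ in positive weight shows the complex is acyclic. Lemma~\ref{characterization_koszul} now proves the theorem.

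The main obstacle is the distributive-law bookkeeping in the passage from $E^0$ to $E^1$: one must verify that transferring a $\K{\O{O}}$-cogenerator up to the root via $d_{\O{O}}$ interacts with the $\epsilon$'s and $\delta$'s on the intervening edges strictly through $\varphi$ and $\K{\varphi}$, so that once the edges are collapsed to weight $0$ the induced map is exactly the $\O{O}$-Koszul differential, with the correct Koszul signs and no surviving mixed terms; the same care pins down the mixed weight $2$ component of $\tm\star\tm$.
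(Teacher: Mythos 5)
Your proposal is correct and follows essentially the same route as the paper: the same twisting morphism $\tm$ concentrated in weight $1$ (acting as $\ctm_{\O{O}}$ on $sE$ and sending $\delta\mapsto\epsilon$), the same three-family verification of $\tm\star\tm=0$ in weight $2$, the same filtration of the Koszul complex by the weight of $\K{\O{O}}$ collapsing the edge factors $\O{D}\circ_{\ctm}\K{\O{D}}$ to give $E^1=\O{O}\circ_{\ctm}\K{\O{O}}$, and the same conclusion via Lemmas \ref{spectral} and \ref{characterization_koszul}. The only point worth sharpening is that the arity-wise $\SS_r$-projectivity hypothesis is needed not just for $\kk$-projectivity but precisely at the $E^0\to E^1$ step, to ensure $\O{O}(r)\otimes_{\SS_r}-$ preserves the homology of $(\O{D}\circ_{\ctm}\K{\O{D}}\circ\K{\O{O}})^{\otimes r}$.
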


\begin{proof}
    Let $\tm\colon\K{\O{D}}\circ_{\K{\varphi}}\K{\O{O}}\to\O{O}\circ_{\varphi}\O{D}=\D{\O{O}}$ be the composite
    \[\K{\O{D}}\circ_{\K{\theta}}\K{\O{O}}\twoheadrightarrow
        \unit\circ sE\oplus\kk\cdot\delta\circ\unit\cong sE\oplus\kk\cdot\delta\stackrel{s^{-1}}{\To}
        E\oplus\kk\cdot\epsilon\cong
        E\circ\unit\oplus\unit\circ\kk\cdot\epsilon\hookrightarrow
        \O{O}\circ_{\theta}\O{D}.\]
    These arrows are the projection onto the weight $1$ part, the desuspension, and the inclusion of the weight $1$ part, respectively, i.e.~$\zeta$ vanishes in weight $\neq 1$, and in weight $1$ it is given by
    \begin{align*}
        \tm(\delta;1)          & = (1;\epsilon),                   &
        \tm(1;\suspension \mu) & = (\mu;1,\dots,1),\quad \mu\in E.
    \end{align*}
    This degree $-1$ morphism is a twisting morphism, i.e.~the composite $\zeta\star\zeta$
    \begin{center}
        \begin{tikzcd}[column sep=7mm]
            \K{\O{D}}\circ_{\K{\varphi}}\K{\O{O}}\ar[r,"\Delta_{(1)}"]&
            (\K{\O{D}}\circ_{\K{\varphi}}\K{\O{O}})\circ_{(1)}(\K{\O{D}}\circ_{\K{\varphi}}\K{\O{O}})\ar[r,"\tm\circ_{(1)}\tm"]&
            \O{O}\circ_{\varphi}\O{D}\circ_{(1)}\O{O}\circ_{\varphi}\O{D}\ar[r,"\gamma_{(1)}"]&
            \O{O}\circ_{\varphi}\O{D}
        \end{tikzcd}
    \end{center}
    vanishes. Let us check this claim.

    The infinitesimal decomposition $\Delta_{(1)}$ of a cooperad $\O{C}$ will be denoted by
    \begin{equation}\label{sweedler_infinitesimal_decomposition}
        \Delta_{(1)}\colon\O{C}\To \O{C}\circ_{(1)}\O{C},\qquad
        \nonumber \Delta_{(1)}(\mu)=\sum_{(\mu)}(\mu^{(1)}\circ_{l}\mu^{(2)})\cdot\sigma,
    \end{equation}
    like in \cite[\S10.1.2]{loday_vallette_2012_algebraic_operads}. See also \eqref{sweedler_decomposition}.

    With this notation, the infinitesimal decomposition of $\K{\O{D}}\circ_{\K{\varphi}}\K{\O{O}}$ is
    \[\Delta_{(1)}(\delta^i;\mu)=\sum_{j+k=i}\sum_{(\mu)}((\delta^j;\mu^{(1)})\circ_l(\delta^k;\mu^{(2)}))\cdot\sigma,\qquad\mu\in\K{\O{O}}.\]
    Therefore, $\tm\star\tm$ vanishes except possibly in weight $2$. In weight $2$ it also vanishes by the following formulas,
    \begin{align*}
        \tm\star\tm(\delta^2;1) & = \tm(\delta;1)\circ_1 \tm(\delta;1)=(1;\epsilon)\circ_1(1;\epsilon)=(1;\epsilon^2)=0.
    \end{align*}
    Given $\mu\in E$,
    \begin{align*}
        \tm\star\tm(\delta;s\mu) & = \tm(\delta;1)\circ_{1}\tm(1;s\mu)-(-1)^{\abs{\mu}}\sum_{i=1}^r \tm(1;s\mu)\circ_{i}\tm(\delta;1)                        \\
                                 & = (1;\epsilon)\circ_{1}(\mu;1,\dots,1)-(-1)^{\abs{\mu}}\sum_{i=1}^r (\mu;1,\dots,1)\circ_{i}(1;\epsilon)                  \\
                                 & = \varphi(\epsilon;\mu)-(-1)^{\abs{\mu}}\sum_{i=1}^r (\mu;1,\stackrel{i-1}{\dots},1,\epsilon,1,\stackrel{r-i}{\dots},1)=0
    \end{align*}
    by definition of $\varphi$.
    We have that $(\K{\O{O}})^{(2)}=s^2R$, and
    given $\sum \mu\circ_l\nu\in R\subset E\circ_{(1)}E$,
    \begin{align*}
        \tm\star\tm\left(1;\sum (-1)^{\abs{\mu}}s\mu\circ_ls\nu\right) & = -\sum \tm(1;s\mu)\circ_l\tm(1;s\nu)          \\
                                                                       & = -\sum (\mu;1,\dots,1)\circ_l(\nu;1,\dots,1)  \\
                                                                       & = \left(-\sum \mu\circ_l\nu;1,\dots,1\right)=0
    \end{align*}
    by the relations $R$ in $\O{O}$.

    We now show that the twisted composite product $(\O{O}\circ_{\varphi}\O{D})\circ_{\tm}(\K{\O{D}}\circ_{\K{\varphi}}\K{\O{O}})$ is acyclic in positive weights. The decomposition of $\K{\O{D}}\circ_{\K{\varphi}}\K{\O{O}}$, which plays a role in the definition of the previous twisted complex, is
    \[\Delta(\delta^i;\mu)=\sum_{[\mu]}\sum_{j+k_1+\cdots+k_l=i}((\delta^{j};\nu);(\delta^{k_1};\nu^{1}),\dots,(\delta^{k_l};\nu^{l}))\cdot\tau.\]
    Here we use the Sweedler formula for the decomposition of $\K{\O{O}}$ in \eqref{sweedler_decomposition}.
    We increasingly filter $(\O{O}\circ_{\varphi}\O{D})\circ_{\tm}(\K{\O{D}}\circ_{\K{\varphi}}\K{\O{O}})$ according to the weight of $\K{\O{O}}$. This is indeed a filtration because, according to the previous formulas
    \begin{multline*}
        (\zeta\circ\id{})\Delta(\delta^i;\mu)=((1;\epsilon);(\delta^{i-1};\mu))
        \\+\sum_{\substack{[\mu]\\\nu\in sE}}\sum_{k_1+\cdots+k_l=i}(s^{-1}\nu;1,\dots,1);(\delta^{k_1};\nu^{1}),\dots,(\delta^{k_l};\nu^{l}))\cdot\tau,
    \end{multline*}
    and the sum of the weights of the $\nu^i$ is one less than the weight of $\mu$. This filtration is bounded below and exhaustive. Moreover, the previous formula also shows that $E^0((\O{O}\circ_{\varphi}\O{D})\circ_{\tm}(\K{\O{D}}\circ_{\K{\varphi}}\K{\O{O}}))=\O{O}\circ(\O{D}\circ_{\ctm}\K{\O{D}})\circ\K{\O{O}}$, with $d_0$ given just by the Koszul complex differential of $\O{D}$.
    We have that
    \[\O{O}\circ\O{D}\circ\K{\O{D}}\circ\K{\O{O}}
        =
        \bigoplus_{r\geq 0}
        \O{O}(r)\otimes_{\SS_r}(\O{D}\circ\K{\O{D}}\circ\K{\O{O}})^{\otimes^r}.\]
    The homology of $\O{D}\circ_{\ctm}\K{\O{D}}$ is $\unit$ by Proposition \ref{dual_numbers_koszul}. Therefore, the homology of $\O{D}\circ\K{\O{D}}\circ\K{\O{O}}$ is $\K{\O{O}}$. Since each $\O{O}(r)$ is projective as an $\SS_r$-module, we conclude that the homology of $\O{O}\circ(\O{D}\circ_{\ctm}\K{\O{D}})\circ\K{\O{O}}$ is $\O{O}\circ\K{\O{O}}$. Moreover, as a graded complex $E^1((\O{O}\circ_{\varphi}\O{D})\circ_{\tm}(\K{\O{D}}\circ_{\K{\varphi}}\K{\O{O}}))=\O{O}\circ_{\ctm}\K{\O{O}}$ is the Koszul complex of $\O{O}$. Indeed, the spectral sequence differential $d_1$ is the Koszul complex differential because $\O{O}$ is a sub-operad of $\D{\O{O}}=\O{O}\circ_{\varphi}\O{D}$, $\K{\O{O}}$ is a sub-cooperad of $\K{\O{D}}\circ_{\K{\varphi}}\K{\O{O}}$, and the twisting morphism $\zeta$ restricts to the canonical twisting morphism $\kappa\colon \K{\O{O}}\to\O{O}$ on $\K{\O{O}}$. Since $\O{O}$ is Koszul, $E^1$ is acyclic in positive weights, hence so is $(\O{O}\circ_{\varphi}\O{D})\circ_{\tm}(\K{\O{D}}\circ_{\K{\varphi}}\K{\O{O}})$, see Lemma \ref{spectral}. Now this theorem follows from Lemma \ref{characterization_koszul}.
\end{proof}

Henceforth, the Koszul dual of $\D{\O{O}}$ will simply be denoted by $\K{\D{\O{O}}}$.

\begin{remark}\label{loday_vallette_gap}
    In \cite{livernet_roitzheim_whitehouse_2013_derived_infty_algebras}, the authors prove the previous theorem for $\O{O}=\O{A}$ the associative operad using the standard quadratic presentations of $\O{A}$ and $\O{D}$ and \cite[Theorem 8.6.5 and Proposition 8.6.6]{loday_vallette_2012_algebraic_operads}. These results are based on \cite[Theorem 8.6.4]{loday_vallette_2012_algebraic_operads}, whose proof contains a gap. Loday and Vallette use a filtration of the bar construction by number of inversions. This filtration is not compatible with the differential, as we now show with a simple example.

    We consider two copies of the ring of dual numbers, $\kk[x]/(x^2)$ and $\kk[y]/(y^2)$. We regard them as operads in $\chain$ concentrated in arity $1$ and degree $0$. Let \[\lambda\colon \kk\cdot y\otimes\kk\cdot x\To \kk\cdot x\otimes\kk\cdot y\] be the homomorphism (rewriting rule) defined by $\lambda(y\otimes x)=x\otimes y$. With the notation in \cite{loday_vallette_2012_algebraic_operads},
    \[\kk[x]/(x^2)\vee_{\lambda}\kk[y]/(y^2)=\kk[x,y]/(x^2,y^2).\]
    A $\kk$-linear basis of this commutative algebra is $\{1,x,y,xy\}$. The bar construction $\bbar(\kk[x,y]/(x^2,y^2))$ is the (non-commutative) polynomial coalgebra generated by $\{s(x),s(y),s(xy)\}$ in degree $1$. Here, the number of inversions of a monomial consists of counting the occurrences of
    \[\begin{array}{cccc}
            \cdots s(y)s(x)\cdots,  &
            \cdots s(xy)s(x)\cdots, &
            \cdots s(y)s(xy)\cdots, &
            \cdots s(xy)s(xy)\cdots.
        \end{array}\]
    The filtration of the bar construction is by number of inversions. The monomial $s(y)s(y)s(x)s(x)$ has one inversion but its differential $s(y)s(xy)s(x)$ has two.
\end{remark}

\section{Derived homotopy algebras and infinity-morphisms}\label{presentation}

Recall that, given an operad $\O{O}$, its infinity operad $\Oinf{\O{O}}=\cobar\K{\O{O}}$ is obtained by applying the cobar construction to its Koszul dual cooperad.

\begin{definition}
    Let $\O{O}$ be a graded operad. A \emph{derived homotopy $\O{O}$-algebra} or \emph{derived $\Oinf{\O{O}}$-algebra} is a $\Oinf{\D{\O{O}}}$-algebra with underlying bounded graded complex.
\end{definition}

Here and elsewhere $\Oinf{\D{\O{O}}}$ is to be understood as the infinity operad of the derived operad $\D{\O{O}}$, not the derived operad of the infinity operad $\Oinf{\O{O}}$, i.e.~$\Oinf{\D{\O{O}}}=\Oinf{(\D{\O{O}})}\neq\D{(\Oinf{\O{O}})}$. The latter is related but considerably smaller. Hence, if $\O{O}$ is graded Koszul $\Oinf{\D{\O{O}}}$ is the cobar construction of the derived cooperad $\D{\K{\O{O}}}$, see Theorem \ref{main_main}.

We would like to characterize derived homotopy $\Oinf{\D{\O{O}}}$-algebras as split filtered $\Oinf{\O{O}}$-algebras, which is a more familiar structure. For this, we need a down-to-earth description of $\Oinf{\D{\O{O}}}$-algebras in terms of operations and equations.

\begin{remark}\label{alternative}
    For $\O{O}$ a graded operad, an $\Oinf{\O{O}}$-algebra can be described in terms of the infinitesimal decomposition $\Delta_{(1)}$ of the coaugmented cooperad $\K{\O{O}}$, see \cite[\S10.1.2]{loday_vallette_2012_algebraic_operads}. We use the Sweedler notation for $\Delta_{(1)}$ in \eqref{sweedler_infinitesimal_decomposition}.

    An $\Oinf{\O{O}}$-algebra is a complex $A$ equipped with structure morphisms
    \begin{equation*}\label{new_structure_maps}
        \begin{split}
            \K{\O{O}}(r)_{n_0}\otimes A_{n_1}\otimes\cdots\otimes A_{n_r} & \To A_{n_0-1+n_1+\cdots+n_r},  \\
            \mu\otimes x_1\otimes\cdots\otimes x_r                                  & \;\mapsto\;\mu(x_1,\dots,x_r),
        \end{split}
    \end{equation*}
    satisfying
    \begin{equation}\label{alfa}
        (\mu\cdot\sigma)(x_1,\dots,x_r)=(-1)^{\alpha_\sigma}\mu(x_{\sigma^{-1}(1)},\dots,x_{\sigma^{-1}(r)}),\qquad \alpha_\sigma =\sum_{\substack{s<t\\\sigma(s)>\sigma(t)}}\abs{x_s}\abs{x_t},
    \end{equation}
    for any permutation $\sigma\in\SS_r$,
    \begin{multline}\label{equation_infinity}
        d(\mu(x_1,\dots,x_r))+\sum_{s=1}^r(-1)^{\beta}\mu(x_1,\dots,d(x_s),\dots,x_r)
        \\
        +\sum_{(\mu)}(-1)^{\gamma}\mu^{(1)}(x_{\sigma^{-1}(1)},\dots,\mu^{(2)}(x_{\sigma^{-1}(l)},\dots),\dots)=0,
    \end{multline}
    where
    \begin{align}\label{beta_gamma}
        \beta  & =\abs{\mu}+\sum_{t=1}^{s-1}\abs{x_t},                                                       &
        \gamma & =\alpha_\sigma+\abs{\mu^{(1)}}+(\abs{\mu^{(2)}}-1)\sum_{m=1}^{l-1}\abs{x_{\sigma^{-1}(m)}},
    \end{align}
    and
    \[1(x)=0,\]
    where $1\in\K{\O{O}}(1)_0$ is given by the coaugmentation.

    Moreover, $\O{O}$-algebras are the same as $\Oinf{\O{O}}$-algebras with $\mu(x_1,\dots,x_r)=0$ for $\mu\in\K{\O{O}}$ of weight $\geq 2$. This reflects the definition of the canonical quasi-isomorphism $\Oinf{\O{O}}\twoheadrightarrow\O{O}$.

    This description also works in the category of graded complexes. In that case, the structure morphisms have bidegree $(0,-1)$ and the signs are computed by using the total degree.
\end{remark}

If the graded operad $\O{O}$ is Koszul, any $\Oinf{\D{\O{O}}}$-algebra has an underlying $\Oinf{\O{D}}$-algebra since $\K{\O{D}}\subset\D{\K{\O{O}}}$, see Theorem \ref{main_main}. We will start by describing $\Oinf{\O{D}}$-algebras. This was done in \cite[10.3.7]{loday_vallette_2012_algebraic_operads} for the non-bigraded version of $\O{D}$. The bigraded version is very similar.

\begin{definition}\label{twisted}
    A \emph{twisted complex} $X$ is a bigraded module equipped with module morphisms
    \[d_i\colon X_{p,q}\To X_{p-i,q+i-1},\quad i\geq 0,\]
    satisfying
    \begin{equation}\label{twisted_complex_equation}
        \sum_{j+k=i}d_jd_k=0,\quad i\geq 0.
    \end{equation}
\end{definition}

\begin{proposition}\label{d_infinity}
    A $\Oinf{\O{D}}$-algebra is the same thing as a twisted complex.
\end{proposition}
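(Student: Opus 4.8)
The plan is to identify the two structures by comparing their data and equations directly, via the Koszul dual cooperad $\K{\O{D}}=\kk[\delta]$ computed in Proposition \ref{dual_numbers_koszul}. Since $\O{D}$ is Koszul, $\Oinf{\O{D}}=\cobar\K{\O{D}}$, so by Remark \ref{alternative} an $\Oinf{\O{D}}$-algebra is a graded complex $A$ equipped with structure operations indexed by the cogenerators of $\K{\O{D}}$. As $\O{D}$ is concentrated in arity $1$, so is $\K{\O{D}}=\kk[\delta]$, and its weight-$i$ part is spanned by the single element $\delta^i$ of bidegree $(-i,i)$. First I would therefore extract from Remark \ref{alternative} the arity-$1$ operations
\[
    \delta^i\colon A\To A,\qquad i\geq 0,
\]
and record their bidegrees: the structure maps have bidegree $(0,-1)$ in $\grchain$ and $\delta^i$ has bidegree $(-i,i)$, so the induced operation shifts bidegree by $(-i,i-1)$, i.e.\ $\delta^i\colon X_{p,q}\to X_{p-i,q+i-1}$. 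Setting $d_i$ to be the operation induced by $\delta^i$ matches exactly the data of a twisted complex in Definition \ref{twisted}.

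Next I would translate the structure equation. Since everything is in arity $1$, the symmetry condition \eqref{alfa} is vacuous (the only permutation is the identity), and the unit condition $1(x)=0$ simply says that the coaugmentation element $\delta^0=1$ acts as zero, consistent with $d_0$ being the internal differential absorbed into the sum. The heart is equation \eqref{equation_infinity}: for a single input the terms $d(\mu(x))$ and $\mu(d(x))$ combine with the quadratic term coming from the infinitesimal decomposition $\Delta_{(1)}$ of $\K{\O{D}}$. I would compute $\Delta_{(1)}(\delta^i)=\sum_{j+k=i}\delta^j\circ_1\delta^k$ (with $j,k\geq 1$), which is the arity-$1$ specialization of the polynomial-coalgebra decomposition; the boundary terms $d(\delta^i(x))$ and $\delta^i(d(x))$ account precisely for the extremal summands $j=0$ and $k=0$. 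Collecting all contributions, \eqref{equation_infinity} becomes $\sum_{j+k=i}d_j d_k=0$ for each $i\geq 0$, which is exactly \eqref{twisted_complex_equation}.

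The only genuinely delicate point is the bookkeeping of signs: I must verify that the Koszul signs $\beta$ and $\gamma$ in \eqref{beta_gamma}, together with the desuspension signs built into the cobar construction, produce the unsigned relation $\sum_{j+k=i}d_jd_k=0$ rather than a signed variant. In arity $1$ the permutation sum is trivial and $\alpha_\sigma=0$, so $\gamma$ reduces to $\abs{\mu^{(1)}}$ plus a term that vanishes because there are no inputs to the left of the single argument; I expect the remaining sign to be absorbed uniformly, as it is in the classical computation in \cite[\S10.3.7]{loday_vallette_2012_algebraic_operads}, the difference here being only that $\delta$ carries the extra horizontal degree which merely tracks the index shift $i$ and contributes no new sign relative to the total-degree convention stated at the end of Remark \ref{alternative}. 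Once the signs are checked, the correspondence $\delta^i\leftrightarrow d_i$ is a bijection between $\Oinf{\O{D}}$-algebra structures on $A$ and twisted complex structures on the underlying bigraded module, functorial in $A$, which proves the proposition.
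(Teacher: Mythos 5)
Your proposal is correct and follows essentially the same route as the paper: identify $\K{\O{D}}=\kk[\delta]$ via Proposition \ref{dual_numbers_koszul}, compute $\Delta_{(1)}(\delta^i)=\sum_{j+k=i}\delta^j\circ_1\delta^k$, and match the data and equation \eqref{equation_infinity} of Remark \ref{alternative} against Definition \ref{twisted}, with $d_0$ the internal differential and $d_i=\delta^i(-)$ for $i>0$. Your handling of the extremal summands $j=0$, $k=0$ (absorbed by the $d(\mu(x))$ and $\mu(d(x))$ terms together with $1(x)=0$) is exactly the paper's implicit bookkeeping.
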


\begin{proof}
    By Proposition \ref{dual_numbers_koszul}, $\K{\O{D}}=\kk[\delta]$ is the polynomial coalgebra on one generator of bidegree $(-1,1)$ with trivial differential. Its infinitesimal decomposition is
    \[\Delta_{(1)}(\delta^i)=\sum_{j+k=i}\delta^j\circ_1\delta^k.\]
    Applying the graded complex version of Remark \ref{alternative}, the correspondence between twisted complexes and $\Oinf{\O{D}}$-algebras is given by the equations
    \begin{align*}
        d_0(x) & =d(x),                  &
        d_i(x) & =\delta^i(x),\quad i>0.
    \end{align*}
    Equation \eqref{twisted_complex_equation} corresponds to \eqref{equation_infinity} for $\K{\O{D}}$ with the previous infinitesimal decomposition.
\end{proof}

\begin{remark}
    Proposition \ref{d_infinity} extends to an equivalence of categories if we define a morphism of twisted complexes $f\colon X\to Y$ as a family of module morphisms $f_{p,q}\colon X_{p,q}\to Y_{p,q}$, $p,q\in\Z$, commuting with all the $d_i$.
\end{remark}

\begin{proposition}\label{dha_char}
    Let $\O{O}$ be a graded Koszul operad.
    A $\Oinf{\D{\O{O}}}$-algebra $A$ is the same as a twisted complex equipped with module morphisms, $i\geq 0$,
    \begin{align*}
        \K{\O{O}}(r)_s\otimes A_{p_1,q_1}\otimes\cdots\otimes A_{p_r,q_r} & \To A_{p_1+\cdots+p_r-i,s-1+q_1+\cdots+q_r+i}, \\
        \mu\otimes x_1\otimes\cdots\otimes x_r                            & \;\mapsto\; \mu_i(x_1,\dots,x_r),
    \end{align*}
    satisfying the following equations for $i\geq 0$:
    \begin{equation}\label{symmetry_equation}
        (\mu\cdot\sigma)_i(x_1,\dots,x_r)=(-1)^{\alpha_\sigma}\mu_i(x_{\sigma^{-1}(1)},\dots,x_{\sigma^{-1}(r)}),\quad \sigma\in\SS_r,
    \end{equation}
    \begin{multline}\label{big_equation}
        \sum_{j+k=i}d_j(\mu_k(x_1,\dots,x_r))+\sum_{s=1}^r(-1)^\beta\sum_{j+k=i}\mu_j(x_1,\dots,d_k(x_s),\dots,x_r)\\
        +\sum_{(\mu)}(-1)^\gamma\sum_{j+k=i}\mu^{(1)}_j(x_{\sigma^{-1}(1)},\dots,\mu^{(2)}_k(x_{\sigma^{-1}(l)},\dots),\dots)=0,
    \end{multline}
    and
    \begin{equation}\label{small_equation}
        1_i(x)=0.
    \end{equation}
    Here, $\alpha,\beta,\gamma$ are as in \eqref{alfa} and \eqref{beta_gamma}.
\end{proposition}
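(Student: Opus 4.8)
The plan is to combine the known description of $\Oinf{\D{\O{O}}}$-algebras as $\D{\K{\O{O}}}$-algebra structures with the explicit decomposition $\K{\D{\O{O}}}=\D{\K{\O{O}}}=\K{\O{D}}\circ_{\K{\varphi}}\K{\O{O}}$ furnished by Theorem~\ref{main_main}. By Remark~\ref{alternative} (in its graded-complex form), a $\Oinf{\D{\O{O}}}$-algebra is the same as a bigraded module $A$ together with structure maps indexed by elements of $\K{\D{\O{O}}}$, subject to the symmetry equation, the master equation~\eqref{equation_infinity}, and the counit normalisation. The whole proof is then a matter of rewriting these three conditions using a $\kk$-basis of $\K{\D{\O{O}}}$ adapted to the distributive-law structure. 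First I would record that, as an $\SS$-module, $(\K{\O{D}}\circ_{\K{\varphi}}\K{\O{O}})(r)=\K{\O{D}}\otimes\big(\K{\O{O}}(r)\otimes\K{\O{D}}^{\otimes r}\big)$, but the more useful description is that every element can be written as $(\delta^i;\mu)$ for a unique $i\geq 0$ and $\mu\in\K{\O{O}}(r)$; this is precisely the underlying-$\SS$-module identification $\K{\O{D}}\circ\K{\O{O}}$ used throughout \S\ref{koszul}. A structure map on $(\delta^i;\mu)$ is exactly the operation $\mu_i(x_1,\dots,x_r)$ in the statement, and its bidegree $(-i,i-1)$ contribution of $\delta^i$ (recall $\delta$ has bidegree $(-1,1)$, so $\delta^i$ has vertical degree $+i$ and horizontal degree $-i$) plus the bidegree of $\mu$ gives exactly the target indices $A_{p_1+\cdots+p_r-i,\,s-1+q_1+\cdots+q_r+i}$.

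The second step is to unravel the three conditions of Remark~\ref{alternative} on these basis elements. The symmetry equation~\eqref{alfa} applied to $(\delta^i;\mu)\cdot\sigma=(\delta^i;\mu\cdot\sigma)$ (the permutation acts only on the $\K{\O{O}}$-factor, since $\delta^i$ sits in arity $1$) yields~\eqref{symmetry_equation} directly, with the same Koszul sign $\alpha_\sigma$. The counit normalisation $1(x)=0$ becomes~\eqref{small_equation}, where now $1_i$ ranges over the images of $\delta^i$ paired with the cooperadic unit of $\K{\O{O}}$: this is where the $\Oinf{\O{D}}$-substructure, i.e.\ the twisted-complex maps $d_i$ from Proposition~\ref{d_infinity}, gets identified. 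The heart of the matter is the master equation~\eqref{equation_infinity}, which I would evaluate using the infinitesimal decomposition $\Delta_{(1)}$ of $\K{\D{\O{O}}}$. That decomposition was essentially computed inside the proof of Theorem~\ref{main_main}: for $(\delta^i;\mu)$ it splits, via the cooperadic distributive law $\K{\varphi}$ and $\Delta_{(1)}(\delta^i)=\sum_{j+k=i}\delta^j\circ_1\delta^k$ together with $\Delta_{(1)}(\mu)$, into terms of the form $(\delta^j;\mu^{(1)})\circ_l(\delta^k;\mu^{(2)})$ with $j+k=i$. Substituting this into~\eqref{equation_infinity} produces precisely the three convolution sums of~\eqref{big_equation}: the differential term $d(\mu(\cdots))$ expands against the $\delta$-splitting to give $\sum_{j+k=i}d_j(\mu_k(\cdots))$; the inner-differential term produces $\sum_{s}(-1)^\beta\sum_{j+k=i}\mu_j(\cdots,d_k(x_s),\cdots)$; and the genuine cooperadic term gives the final $\sum_{(\mu)}(-1)^\gamma\sum_{j+k=i}\mu^{(1)}_j(\cdots,\mu^{(2)}_k(\cdots),\cdots)$.

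The main obstacle I anticipate is bookkeeping the signs $\beta$ and $\gamma$, and checking that the Koszul signs coming from the bigraded suspension in $\K{\D{\O{O}}}$ agree with those from the total degree used in Remark~\ref{alternative}. Since $\delta$ carries \emph{total} degree $0$ (bidegree $(-1,1)$), passing the $\delta^j$ factors past the inputs $x_s$ in the infinitesimal decomposition contributes no extra Koszul sign, which is what allows $\beta$ and $\gamma$ to remain literally the formulas~\eqref{beta_gamma} with total degrees. I would verify this carefully: the distributive law $\K{\varphi}$ moves $\delta$-powers to the right without sign (its definition in Definition~\ref{codistributive} has no signs), and the only signs are the usual ones from reordering the $x_s$ and from the cooperadic coproduct of $\K{\O{O}}$. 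One must also confirm that no operations are lost: the identification is a bijection of structure maps because $\{(\delta^i;\mu)\}$, with $i\geq 0$ and $\mu$ ranging over a basis of $\K{\O{O}}$, is a $\kk$-basis of $\K{\D{\O{O}}}$. Finally, I would note that the argument is symmetric in the sense of Remark~\ref{alternative}'s last paragraph: everything takes place in $\grchain$ with structure maps of bidegree $(0,-1)$ for $\mu\in\K{\O{O}}$ alone, the horizontal shifts being entirely carried by the $\delta$-exponents, and this is exactly the source of the index shift by $-i$ in the horizontal direction and $+i$ in the vertical direction recorded in the proposition.
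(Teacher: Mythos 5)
Your proposal is correct and follows essentially the same route as the paper: translate the three conditions of Remark \ref{alternative} for $\K{\D{\O{O}}}=\K{\O{D}}\circ_{\K{\varphi}}\K{\O{O}}$ using the basis elements $(\delta^i;\mu)$ and the infinitesimal decomposition $\Delta_{(1)}(\delta^i;\mu)=\sum_{j+k=i}\sum_{(\mu)}((\delta^j;\mu^{(1)})\circ_l(\delta^k;\mu^{(2)}))\cdot\sigma$, with no extra Koszul signs because $\delta$ has total degree $0$. The one step you should make explicit is the change of variables the paper performs at the end: the raw translation only forces $1_0(x)=0$, while the operations $(\delta^i;1)$ for $i>0$ are genuinely nonzero and must be renamed as the twisted-complex differentials $d_i$ (and then declared to satisfy $1_i=0$) before equation \eqref{small_equation} holds for all $i\geq 0$ as stated.
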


\begin{proof}
    We can split the Sweedler notation for the infinitesimal decomposition of $\O{O}$ in \eqref{sweedler_infinitesimal_decomposition} as follows.
    If $\mu\in\K{\O{O}}$ has positive weight,
    \[\Delta_{(1)}(\mu)=1\circ_i\mu+\sum_{(\mu)'}(\mu^{(1)}\circ_{l}\mu^{(2)})\cdot\sigma+\sum_{s=1}^r\mu\circ_s1,\]
    where $(\mu)'$ stands for the summands where both $\mu^{(1)}$ and $\mu^{(2)}$ have positive weight, and
    \[\Delta_{(1)}(1)=1\circ_11.\]
    The infinitesimal de composition of $\K{\D{\O{O}}}$ is then
    \begin{align*}
        \Delta_{(1)}(\delta^i;\mu)  ={} & \sum_{j+k=i}(\delta^j;1)\circ_1(\delta^k;\mu)
        +\sum_{s=1}^r\sum_{j+k=i}(\delta^j;\mu)\circ_s(\delta^k;1)                                                                 \\
                                        & + \sum_{(\mu)'}\sum_{j+k=i}((\delta^j;\mu^{(1)})\circ_l(\delta^k;\mu^{(2)}))\cdot\sigma,
    \end{align*}
    see Theorem \ref{main_main}.

    By the graded version of Remark \ref{alternative}, a $\Oinf{\D{\O{O}}}$-algebra is a graded complex $A$ with differential $d_0$ equipped with module morphisms
    \begin{align*}
        (\K{\O{D}}\circ_{\K{\varphi}}\K{\O{O}})(r)_{p_0,q_0}\otimes A_{p_1,q_1}\otimes\cdots\otimes A_{p_r,q_r} & \To A_{p_0+p_1+\cdots+p_r,q_0-1+q_1+\cdots+q_r},
    \end{align*}
    satisfying certain equations. Note that $\K{\O{D}}\circ\K{\O{O}}=k[\delta]\otimes\K{\O{O}}$. If we denote
    \[(\delta^i\otimes\mu)(x_1,\cdots, x_r)=\mu_i(x_1,\dots,x_r),\]
    the equations are
    \[(\mu_i\cdot\sigma)(x_1,\dots,x_r)=(-1)^{\alpha_\sigma}\mu_i(x_{\sigma^{-1}(1)},\dots,x_{\sigma^{-1}(r)})\]
    for any permutation $\sigma\in\SS_r$, which coincides with \eqref{symmetry_equation},
    \begin{multline}\label{big_equation_2}
        d_0(\mu_i(x_1,\dots,x_r))+\sum_{t=1}^r(-1)^{\beta}\mu_i(x_1,\dots,d_0(x_t),\dots,x_r)
        \\
        +\sum_{\substack{j+k=i\\j>0}}1_j\mu_k(x_1,\dots, x_r)
        +\sum_{s=1}^r(-1)^\beta\sum_{\substack{j+k=i\\k>0}}\mu_j(x_1,\dots,1_k(x_s),\dots,x_r)\\
        +\sum_{(\mu)'}(-1)^\gamma\sum_{j+k=i}\mu_j^{(1)}(x_{\sigma^{-1}(1)},\dots,\mu_k^{(2)}(x_{\sigma^{-1}(l)},\dots),\dots)=0,
    \end{multline}
    if $\mu$ has positive weight,
    \begin{equation}\label{twisted_complex_equation_2}
        d_0(1_i(x_1))+1_i(d_0(x_1))+\sum_{\substack{j+k=i\\j,k>0}}1_j(1_k(x_1))=0,
    \end{equation}
    and
    \begin{equation}\label{small_equation_2}
        1_0(x)=0.
    \end{equation}
    If we make the following change of variables,
    \begin{align*}
        d_i(x) & = 1_i(x),\quad i>0; \\
        1_i(x) & = 0,\quad i>0;
    \end{align*}
    then \eqref{big_equation_2} turns into \eqref{big_equation}, \eqref{twisted_complex_equation_2} turns into \eqref{twisted_complex_equation}, and \eqref{small_equation_2} turns into \eqref{small_equation}.
\end{proof}

\begin{remark}
    For $\O{O}$ graded Koszul, any $\Oinf{\D{\O{O}}}$-algebra has an underlying $\O{O}$-algebra in $\grchain$ since $\K{\O{O}}\subset\D{\K{\O{O}}}$ by Theorem \ref{main_main}, so $\Oinf{\O{O}}\subset\Oinf{\D{\O{O}}}$. The graded complex differential of this underlying structure is $d_0$ and the structure operations are the $\mu_0$ for $\mu\in\K{\O{O}}$.
\end{remark}

\begin{example}\label{examples_derived_homotopy_algebras}
    We apply the previous proposition to the classical operads $\O{A}$, $\O{C}$, and $\O{L}$, whose algebras are associative, commutative, and Lie algebras, respectively, as well as to the initial operad $\unit$, which is $\kk$ concentrated in arity and degree $0$ (generated by the operadic unit).
    \begin{enumerate}
        \item A \emph{derived homotopy associative algebra} $A$ is a twisted complex equipped with bidegree $(-i,r-2+i)$ operations $m_{i,r}\colon A^{\otimes^r}\to A$, $r\geq 2$, $i\geq 0$,
              satisfying
              \begin{multline*}
                  \sum_{j+k=i}d_j(m_{k,r}(x_1,\dots,x_r))+\sum_{s=1}^r(-1)^{r-1+\sum_{t=1}^{s-1}\abs{x_t}}\sum_{j+k=i}m_{j,r}(x_1,\dots,d_k(x_s),\dots,x_r)\\
                  +\sum_{s+t=r+1}\sum_{l=1}^s(-1)^{(s-l)(t-1)+s-1+t\sum_{n=1}^{l-1}\abs{x_n}}\sum_{j+k=i}m_{j,s}(x_{1},\dots,m_{k,t}(x_{l},\dots),\dots)=0,
              \end{multline*}
              compare \cite[\S9.2.1 and the formula in the proof of Proposition 9.2.4]{loday_vallette_2012_algebraic_operads}. As noticed in the previous remark, $A$ equipped with the differential $d_0$ and the operations $m_{0,r}$ is a usual $A$-infinity algebra, hence $m_{0,2}$ is a binary product, associative up to the chain homotopy $m_{0,3}$, $d_0$ satisfies the Leibniz rule, etc. Among the new operations, $d_1$ satisfies the Leibniz rule up to the homotopy $\pm m_{1,2}$,
              \begin{multline*}
                  d_1m_{0,2}(x_1,x_2)-m_{0,2}(d_1(x_1),x_2)-(-1)^{\abs{x_1}}m_{0,2}(x_1,d_1(x_2))\\
                  =d_0(-m_{1,2}(x_1,x_2))+m_{1,2}(d_0(x_1),x_2)+(-1)^{\abs{x_1}}m_{1,2}(x_1,d_0(x_2)).
              \end{multline*}
              Our derived $\Oinf{\O{A}}$-algebras differ from Sagave's \cite{sagave_2010_dgalgebras_derived_algebras} in some signs. This is because Sagave works with bicomplexes with commuting differentials and uses the Koszul sign rule with respect to the horizontal and vertical degrees separately, while we use bicomplexes with anti-commuting differentials and apply the Koszul sign rule with respect to the total degree. Both approaches are equivalent by \cite[Remarks 2.2 and 2.8]{muro_roitzheim_2019_homotopy_theory_bicomplexes}.
        \item Recall that a (non-trivial) \emph{$(p,q)$-shuffle}, $p,q\geq1$, is a permutation $\sigma\in\SS_{p+q}$ of the form
              \[\begin{pmatrix}
                      1   & \cdots & p   & p+1 & \cdots & p+q \\
                      u_1 & \cdots & u_p & v_1 & \cdots & v_q
                  \end{pmatrix}\]
              where $u_1<\cdots<u_p$ and $v_1<\dots<v_q$. They form a subset $\SS_{p,q}\subset\SS_{p+q}$. We say that a map $g\colon X^{\otimes^r}\to Y$ \emph{vanishes on shuffles} if
              \[\sum_{\sigma\in\SS_{s,r-s}}(-1)^{\sum_{u_p>v_q}\abs{x_{u_p}}\abs{x_{v_q}}}g(x_{\sigma^{-1}(1)},\dots,x_{\sigma^{-1}(r)})=0\]
              for all $1< s< r$. The exponent of $-1$ is indeed $\alpha_\sigma$.
              A \emph{derived homotopy commutative algebra} is a derived homotopy associative algebra such that the $m_{i,r}$ vanish on shuffles, compare \cite[\S5]{cheng_getzler_2008_transferring_homotopy_commutative}. In particular, the binary product $m_{0,2}$ is commutative.
        \item A map $g\colon X^{\otimes^r}\to Y$ is \emph{skew-symmetric} if
              \[g(x_1,\dots,x_r)=(-1)^{\alpha_\sigma}\operatorname{sign}(\sigma)g(x_{\sigma^{-1}(1)},\dots,x_{\sigma^{-1}(r)})\]
              for any permutation $\sigma\in\SS_r$.
              A \emph{derived homotopy Lie algebra} $L$ is a twisted complex equipped with skew-symmetric bidegree $(-i,r-2+i)$ operations $\ell_{i,r}\colon A^{\otimes^r}\to A$, $r\geq 2$, $i\geq 0$,
              satisfying the following generalized Jacobi identity
              \begin{multline*}
                  \sum_{j+k=i}d_j(\ell_{k,r}(x_1,\dots,x_r))+\sum_{s=1}^r(-1)^{r-1+\sum_{t=1}^{s-1}\abs{x_t}}\sum_{j+k=i}\ell_{j,r}(x_1,\dots,d_k(x_s),\dots,x_r)\\
                  +\sum_{\substack{s+t=r+1\\\sigma\in\SS_{t,s-1}}}(-1)^{(s-1)t+\alpha_\sigma}\operatorname{sign}(\sigma)\sum_{j+k=i}\ell_{j,s}(\ell_{k,t}(x_{u_1},\dots,x_{u_t}),x_{v_1},\dots,x_{v_{s-1}})=0,
              \end{multline*}
              compare \cite{lada_markl_1995_strongly_homotopy_lie}. Here, $\alpha_\sigma$ can be computed as in (2).
              Like in previous cases, $L$ equipped with the differential $d_0$ and the operations $\lambda_{0,r}$ is a usual $L$-infinity algebra. Hence, $\ell_{0,2}$ is a binary skew-symmetric product which satisfies the Jacobi identity up to the chain homotopy $\ell_{0,3}$, $d_0$ satisfies the Leibniz rule, and $d_1$ too up to the skew-symmetric homotopy $\pm\ell_{1,2}$,
              \begin{multline*}
                  d_1\ell_{0,2}(x_1,x_2)-\ell_{0,2}(d_1(x_1),x_2)-(-1)^{\abs{x_1}}\ell_{0,2}(x_1,d_1(x_2))\\
                  =d_0(-\ell_{1,2}(x_1,x_2))+\ell_{1,2}(d_0(x_1),x_2)+(-1)^{\abs{x_1}}\ell_{1,2}(x_1,d_0(x_2)).
              \end{multline*}
        \item A \emph{derived homotopy $\unit$-algebra} is just a twisted complex.
    \end{enumerate}
\end{example}

\begin{definition}\label{derived_homotopy_algebra}
    A \emph{split filtered $\Oinf{\O{O}}$-algebra} $A$ is an $\Oinf{\O{O}}$-algebra in $\chain$ such that:
    \begin{enumerate}
        \item For each $n\in\Z$, the degree $n$ module splits as
              \[A_n=\bigoplus_{p+q=n}A_{p,q}\]
              with $A_{p,q}=0$ if $p<0$. 

        \item If we denote
              \[F_mA_n=\bigoplus_{\substack{p+q=n\\p\leq m}}A_{p,q},\]
              the differential of $A$ satisfies $d(F_mA_n)\subset F_mA_{n-1}$. 

        \item The $\Oinf{\O{O}}$-algebra structure is compatible with the filtration, i.e.~the structure maps (co)restrict to
              \[\K{\O{O}}(r)\otimes F_{m_1}A\otimes\cdots\otimes F_{m_r}A\To F_{m_1+\cdots+m_r}A.\]
    \end{enumerate}
\end{definition}

\begin{remark}
    Derived homotopy algebras are examples of filtered complexes which are exhaustive and bounded below, see Definition
    \ref{filtered_complex}, so we can use Lemma \ref{spectral}.
\end{remark}

\begin{corollary}\label{dha_char_2}
    A derived homotopy $\O{O}$-algebra $A$ is the same as a split filtered $\Oinf{\O{O}}$-algebra.
\end{corollary}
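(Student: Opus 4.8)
The plan is to deduce everything from the concrete description of $\Oinf{\D{\O{O}}}$-algebras already established in Proposition \ref{dha_char}, and to show that the only difference between that description and Definition \ref{derived_homotopy_algebra} is whether the bigraded family of operations is kept separate or assembled into single operations on the total complex. Recall that, by Proposition \ref{dha_char} (with $\O{O}$ graded Koszul), a derived homotopy $\O{O}$-algebra is a bigraded module $A=(A_{p,q})$ with $A_{p,q}=0$ for $p<0$, carrying a twisted complex differential $\{d_i\}_{i\geq 0}$ of bidegrees $(-i,i-1)$ and operations $\{\mu_i\}_{i\geq 0}$ for $\mu\in\K{\O{O}}$, each $\mu_i$ lowering the horizontal degree by $i$, subject to \eqref{symmetry_equation}, \eqref{big_equation}, \eqref{small_equation} and the twisted identity \eqref{twisted_complex_equation}. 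I would set up the correspondence with split filtered $\Oinf{\O{O}}$-algebras by totalization in one direction and decomposition by horizontal degree in the other.

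For the forward direction I would put $A_n=\bigoplus_{p+q=n}A_{p,q}$, which is condition (1) of Definition \ref{derived_homotopy_algebra}, filter by $F_mA_n=\bigoplus_{p+q=n,\,p\leq m}A_{p,q}$, and collect the bigraded pieces into
\[
d=\sum_{i\geq 0}d_i,\qquad \mu(x_1,\dots,x_r)=\sum_{i\geq 0}\mu_i(x_1,\dots,x_r).
\]
Because $A$ is concentrated in non-negative horizontal degrees and each $d_i,\mu_i$ lowers the horizontal degree by $i$, only finitely many summands act nontrivially on a given element, so these sums are well defined and carry the correct total degrees $-1$ and $\abs{\mu}-1$. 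Conditions (2) and (3) then hold automatically, since lowering the horizontal degree by $i\geq 0$ means $d$ and the $\mu$ never raise it. Finally, I would substitute $d=\sum_j d_j$, $\mu=\sum_k\mu_k$ into the $\Oinf{\O{O}}$-axioms \eqref{alfa} and \eqref{equation_infinity} and project onto the component that lowers the output horizontal degree by a fixed $i$; this produces precisely \eqref{symmetry_equation} and \eqref{big_equation}, while $1(x)=\sum_i 1_i(x)=0$ gives \eqref{small_equation}. Hence $A$ becomes a split filtered $\Oinf{\O{O}}$-algebra.

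For the converse I would start from a split filtered $\Oinf{\O{O}}$-algebra: condition (1) supplies the bigrading with $A_{p,q}=0$ for $p<0$, and the splitting lets me decompose $d=\sum_i d_i$ and each $\mu=\sum_i\mu_i$ into horizontal-degree-lowering components. Conditions (2) and (3) say exactly that these components vanish for $i<0$, yielding a twisted complex differential and operations indexed by $i\geq 0$; projecting the $\Oinf{\O{O}}$-axioms onto fixed horizontal degree recovers \eqref{symmetry_equation}, \eqref{big_equation}, \eqref{small_equation}, and $d^2=0$ gives \eqref{twisted_complex_equation}, so by Proposition \ref{dha_char} this is a derived homotopy $\O{O}$-algebra. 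These two constructions are visibly mutually inverse, and defining morphisms on both sides in the evident degreewise, filtration-preserving way upgrades the bijection to an isomorphism of categories.

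I expect no step to be deep; the content is bookkeeping. The two points needing genuine care are: (a) confirming that the totalized operations $\sum_i d_i$ and $\sum_i\mu_i$ are well defined, which is exactly where the boundedness hypothesis $A_{p,*}=0$ for $p<0$ (equivalently the bounded-below exhaustive filtration) enters; and (b) tracking the Koszul signs $\alpha_\sigma,\beta,\gamma$ of \eqref{alfa}--\eqref{beta_gamma} through totalization. The latter is harmless because those signs depend only on the total degree, hence are unaffected when an operation is split into its horizontal-degree components, so the comparison of \eqref{equation_infinity} with the $i$-indexed family \eqref{big_equation} is a direct term-by-term match.
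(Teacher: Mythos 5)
Your proof is correct and follows essentially the same route as the paper: decompose the differential and structure maps of a split filtered $\Oinf{\O{O}}$-algebra into their horizontal-degree components (finitely many per element thanks to $A_{p,*}=0$ for $p<0$), observe that the filtration conditions force the components with $i<0$ to vanish, and match the equations of Remark \ref{alternative} term by term with those of Proposition \ref{dha_char}, with no change of variables needed. The paper states only the decomposition direction and leaves the rest as "a mere translation," whereas you spell out both directions and the sign bookkeeping explicitly, but the mathematical content is identical.
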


\begin{proof}
    Since $A$ is filtered, the restriction of the differential to $A_{p,q}$ is determined by its components,
    \[d_i\colon A_{p,q}\To A_{p-i,q+i-1},\quad i\geq 0.\]
    There are finitely many for each $p$ since $A_{p-i,q+i-1}=0$ for $i>p$. It is well known that \eqref{twisted_complex_equation} is equivalent to the differential of $A$ squaring to zero.

    Similarly, the restriction to $\K{\O{O}}(r)_s\otimes A_{p_1,q_1}\otimes\cdots\otimes A_{p_r,q_r}$ of the structure maps in Remark \ref{alternative} are determined by their components
    \[\K{\O{O}}(r)_s\otimes A_{p_1,q_1}\otimes\cdots\otimes A_{p_r,q_r} \To A_{p_1+\cdots+p_r-i,s-1+q_1+\cdots+q_r+i},\qquad i\geq0,\]
    since these structure maps are compatible with the filtration of $A$. If we denote these components by
    \[\mu\otimes x_1\otimes\cdots\otimes x_r\mapsto \mu_i(x_1,\dots,x_r),\]
    the equations in Proposition \ref{dha_char} are a mere translation of those in Remark \ref{alternative}. Unlike in the proof of Proposition \ref{dha_char}, no change of variables is needed here.
\end{proof}

\begin{remark}
    Proposition \ref{dha_char} extends to an equivalence of categories if we define morphisms of split filtered $\Oinf{\O{O}}$-algebras $f\colon A\to B$ as $\Oinf{\O{O}}$-algebra morphisms preserving the horizontal and the vertical degree, $f(A_{p,q})\subset B_{p,q}$, $p,q\in\Z$.
\end{remark}

Now, we would like to do the same for $\infty$-morphisms between derived homotopy $\O{O}$-algebras, i.e.~we want to characterize them as filtration preserving $\infty$-morphisms between split filtered $\Oinf{\O{O}}$-algebras. In order to achieve this goal we need a down-to-earth description of $\Oinf{\D{\O{O}}}$-algebra $\infty$-morphisms.

\begin{remark}\label{infinity_morphisms}
    For a graded operad $\O{O}$, an $\infty$-morphism between $\Oinf{\O{O}}$-algebras can be described in terms of the decomposition $\Delta$ and the infinitesimal decomposition $\Delta_{(1)}$ of $\K{\O{O}}$.  We use the Sweedler notations in \eqref{sweedler_decomposition} and \eqref{sweedler_infinitesimal_decomposition} for these $\Delta$ and $\Delta_{(1)}$.

    An \emph{$\infty$-morphism} between $\Oinf{\O{O}}$-algebras $f\colon A\leadsto B$ is given by structure morphisms
    \begin{align*}
        \K{\O{O}}(r)_{n_0}\otimes A_{n_1}\otimes \cdots\otimes A_{n_r} & \To B_{n_0+n_1+\cdots+n_r},       \\
        \mu\otimes x_1\otimes\cdots\otimes x_r                         & \;\mapsto\;f(\mu)(x_1,\dots,x_r),
    \end{align*}
    satisfying the following equations, where we borrow notation from Remark \ref{alternative} for signs:
    \[f(\mu\cdot\sigma)(x_1,\dots,x_r)=(-1)^{\alpha_\sigma} f(\mu)(x_{\sigma^{-1}(1)},\dots,x_{\sigma^{-1}(r)}),\]
    for any permutation $\sigma\in\SS_r$, and
    \begin{multline*}d(f(\mu)(x_1,\dots,x_r))-\sum_{s=1}^r(-1)^{\beta}f(\mu)(x_1,\dots,d(x_s),\dots,x_r)\\
        =\sum_{(\mu)}(-1)^{\gamma}f(\mu^{(1)})(x_{\sigma^{-1}(1)},\dots,\mu^{(2)}(x_{\sigma^{-1}(l)},\dots),\dots)\\
        -\sum_{[\mu]}(-1)^{\lambda}\nu(f(\nu^1)(x_{\tau^{-1}(1)},\dots),\dots,f(\nu^l)(\dots, x_{\tau^{-1}(r)})),
    \end{multline*}
    where $\beta$ and $\gamma$ are as in Remark \ref{alternative} and $\lambda$ consists of adding up $\alpha_\tau$
    and
    \[|\nu^u| |x_{\tau^{-1}(v)}|\]
    whenever $\nu^u$ appears after $x_{\tau^{-1}(v)}$.

    The \emph{underlying morphism} of an $\infty$-morphism $f\colon A\leadsto B$ is $f(1)\colon A\to B$. The \emph{composition} of $f$ with another $\infty$-morphism $g\colon B\leadsto C$ is given by
    \begin{align*}
        (gf)(\mu)(x_1,\dots,x_r) & =\sum_{[\mu]}(-1)^\lambda g(\nu)(f(\nu_1)(x_{\tau^{-1}(1)},\dots),\dots,f(\nu_l)(\dots,x_{\tau^{-1}(r)})).
    \end{align*}
    The category of $\Oinf{\O{O}}$-algebras and $\infty$-morphisms between them will be denoted by $\inftyalgebras{\chain}{\Oinf{\O{O}}}$.

    This description also works in the category of graded complexes using the total degree for signs.
\end{remark}

\begin{definition}\label{derived_infty_morphism}
    For $\O{O}$ a graded Koszul operad, a \emph{derived $\infty$-morphism} between derived homotopy $\O{O}$-algebras is an $\infty$-morphism of $\Oinf{\D{\O{O}}}$-algebras between them.
\end{definition}

\begin{proposition}\label{dhm_char}
    Let $\O{O}$ be a graded Koszul operad.
    An $\infty$-morphism of $\Oinf{\D{\O{O}}}$-algebras $f\colon A\leadsto B$ is the same as a family of module morphisms, $i\geq 0$,
    \begin{align*}
        \K{\O{O}}(r)_s\otimes A_{p_1,q_1}\otimes\cdots\otimes A_{p_r,q_r} & \To B_{p_1+\cdots+p_r-i,s+q_1+\cdots+q_r+i}, \\
        \mu\otimes x_1\otimes\cdots\otimes x_r                            & \;\mapsto\; f_i(\mu)(x_1,\dots,x_r),
    \end{align*}
    satisfying the following equations for $i\geq 0$,
    \[f(\mu\cdot\sigma)_i(x_1,\dots,x_r)=(-1)^{\alpha_\sigma} f(\mu)_i(x_{\sigma^{-1}(1)},\dots,x_{\sigma^{-1}(r)}),\quad\sigma\in\SS_r,\]
    \begin{multline*}
        \sum_{j+k=i}d_j(f_k(\mu)(x_1,\dots,x_r))-\sum_{s=1}^r(-1)^{\beta}\sum_{j+k=i}f_j(\mu)(x_1,\dots,d_k(x_s),\dots,x_r)
        \\
        =\sum_{(\mu)}(-1)^{\gamma}\sum_{j+k=i}f_j(\mu^{(1)})(x_{\sigma^{-1}(1)},\dots,\mu_k^{(2)}(x_{\sigma^{-1}(l)},\dots),\dots)\\
        -\sum_{[\mu]}(-1)^{\lambda}\sum_{j+k_1+\cdots+k_l=i}\nu_j(f_{k_1}(\nu^1)(x_{\tau^{-1}(1)},\dots),\dots,f_{k_l}(\nu^l)(\dots, x_{\tau^{-1}(r)})).
    \end{multline*}
    Here, $\alpha,\beta,\gamma$ are as in \eqref{alfa} and \eqref{beta_gamma}.
\end{proposition}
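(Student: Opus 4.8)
The plan is to follow the proof of Proposition~\ref{dha_char} almost verbatim, replacing the description of $\Oinf{\D{\O{O}}}$-algebras in Remark~\ref{alternative} by the description of $\infty$-morphisms in Remark~\ref{infinity_morphisms}, applied to the operad $\D{\O{O}}$ and computed in $\grchain$ with the total degree governing signs. Since $\O{O}$ is graded Koszul, Theorem~\ref{main_main} identifies $\K{\D{\O{O}}}=\K{\O{D}}\circ_{\K{\varphi}}\K{\O{O}}$, whose underlying $\SS$-module is $\K{\O{D}}\circ\K{\O{O}}=\kk[\delta]\otimes\K{\O{O}}$; its elements are the $(\delta^i;\mu)$ with $i\geq 0$ and $\mu\in\K{\O{O}}$. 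Setting $f_i(\mu):=f((\delta^i;\mu))$ then produces the asserted family of module morphisms, and, since $\delta$ has bidegree $(-1,1)$, the element $(\delta^i;\mu)$ with $\mu\in\K{\O{O}}(r)_s$ has total degree $s$, so $f_i(\mu)$ induces precisely the shift $(-i,s+i)$ recorded in the statement.

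First I would read off the symmetry equation: the $\SS_r$-action on $(\delta^i;\mu)=\delta^i\otimes\mu$ is carried entirely by the factor $\mu$, so the symmetry relation of Remark~\ref{infinity_morphisms} translates immediately, with the sign $\alpha_\sigma$ of \eqref{alfa}. For the main equation I would substitute into the $\infty$-morphism equation of Remark~\ref{infinity_morphisms} the two decompositions of $\K{\D{\O{O}}}$ already recorded in the paper: the full decomposition computed in the proof of Theorem~\ref{main_main},
\[\Delta(\delta^i;\mu)=\sum_{[\mu]}\sum_{j+k_1+\cdots+k_l=i}((\delta^{j};\nu);(\delta^{k_1};\nu^{1}),\dots,(\delta^{k_l};\nu^{l}))\cdot\tau,\]
which feeds the target-composition term and produces the summands $\sum_{j+k_1+\cdots+k_l=i}\nu_j(f_{k_1}(\nu^1)(\dots),\dots,f_{k_l}(\nu^l)(\dots))$; and the infinitesimal decomposition from the proof of Proposition~\ref{dha_char},
\[\Delta_{(1)}(\delta^i;\mu)=\sum_{j+k=i}(\delta^j;1)\circ_1(\delta^k;\mu)+\sum_{s=1}^r\sum_{j+k=i}(\delta^j;\mu)\circ_s(\delta^k;1)+\sum_{(\mu)'}\sum_{j+k=i}((\delta^j;\mu^{(1)})\circ_l(\delta^k;\mu^{(2)}))\cdot\sigma,\]
which feeds the source-operation term and produces the $\mu_k^{(2)}$ summands. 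As in Proposition~\ref{dha_char}, the key move is then the change of variables $d_i:=1_i$ for $i>0$ (and $1_i:=0$ for $i>0$) in both $A$ and $B$, which turns the unary unit factors $(\delta^j;1)$ into the twisted differentials.

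The only delicate point, and the step I expect to be the main obstacle, is the sign and unit bookkeeping. On the one hand, I must check that the exponents $\beta,\gamma,\lambda$ furnished by Remark~\ref{infinity_morphisms} for $\D{\O{O}}$ agree with those in \eqref{beta_gamma}; this holds because $\delta$ has total degree $0$ and is therefore transparent to the Koszul sign rule, so all signs are literally those of the underlying $\Oinf{\O{O}}$-structure. On the other hand, I must track where each unit factor $(\delta^j;1)$ is routed: those occurring at the root of $\Delta$, i.e.~when the $\K{\O{O}}$-part $\nu$ equals $1$, become after the relabelling the target differentials appearing in $\sum_{j+k=i}d_j(f_k(\mu)(\dots))$, their global sign flipping as the composition term is carried to the left-hand side and combining with the bare $d_0$ already present there; those occurring at an inner slot of $\Delta_{(1)}$, namely the pieces $(\delta^j;\mu)\circ_s(\delta^k;1)$, become the source differentials inside $\sum_{j+k=i}f_j(\mu)(\dots,d_k(x_s),\dots)$; while the remaining unit contributions, which still carry a genuine morphism component $f_j(1)$, reorganise together with the reduced pieces into the $\sum_{(\mu)}$ and $\sum_{[\mu]}$ summands displayed in the statement. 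Verifying that this routing is exhaustive and sign-consistent, so that no spurious term survives, is the crux; once it is done the three resulting equations are exactly those asserted, completing the identification.
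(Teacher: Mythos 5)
Your proposal is correct and is precisely the argument the paper intends: the paper's own ``proof'' is the single sentence that it is analogous to Proposition~\ref{dha_char} and is skipped, and you have carried out that analogy faithfully, including the two genuinely delicate points (that the change of variables $1_j\mapsto d_j$ applies only to the algebra structures of $A$ and $B$ while the unary components $f_j(1)$ survive as honest data, and that $\delta$ has total degree $0$ so all signs reduce to those of \eqref{alfa} and \eqref{beta_gamma}). Your routing of the unit factors of $\Delta$ and $\Delta_{(1)}$ to, respectively, the target and source differential terms on the left-hand side is exactly the correct bookkeeping.
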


The proof is analogous to that of Proposition \ref{dha_char}, hence we skip it.

\begin{definition}
    Given a graded Koszul operad $\O{O}$ and two split filtered $\Oinf{\O{O}}$-algebras $A,B$ we say that an $\infty$-morphism $f\colon A\leadsto B$ is \emph{filtration preserving} if $f(F_mA)\subset F_mB$, $m\in\Z$.
\end{definition}

\begin{corollary}\label{dhm_agrees}
    Let $\O{O}$ be a graded Koszul operad.
    A derived $\infty$-morphism of derived homotopy $\O{O}$-algebras is the same as a filtration-preserving $\infty$-morphism between split filtered $\Oinf{\O{O}}$-algebras.
\end{corollary}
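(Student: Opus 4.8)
The plan is to repeat the argument of Corollary \ref{dha_char_2} at the level of morphisms rather than objects. By Definition \ref{derived_infty_morphism} a derived $\infty$-morphism is an $\infty$-morphism of the underlying $\Oinf{\D{\O{O}}}$-algebras, and Proposition \ref{dhm_char} records such a morphism as a family of bigraded components $f_i(\mu)$, $i\geq 0$. On the other side, Remark \ref{infinity_morphisms}, read in $\grchain$ with signs taken from the total degree, describes an $\infty$-morphism of the underlying $\Oinf{\O{O}}$-algebras by maps $f(\mu)$ of total degree $\abs{\mu}$. The objects themselves are already matched by Corollary \ref{dha_char_2}, so it remains only to match the two descriptions of morphisms.

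First I would restrict each $f(\mu)$ to a bigraded summand $A_{p_1,q_1}\otimes\cdots\otimes A_{p_r,q_r}$, using that $A$ and $B$ are split in the sense of Definition \ref{derived_homotopy_algebra}. As $\mu\in\K{\O{O}}(r)_s$ sits in horizontal degree $0$ and $f(\mu)$ preserves the total degree, its image lands in $\bigoplus_{i\in\Z}B_{p_1+\cdots+p_r-i,\,s+q_1+\cdots+q_r+i}$; writing $f_i(\mu)$ for the $i$-th component yields a decomposition $f(\mu)=\sum_i f_i(\mu)$ into maps with exactly the bidegrees of Proposition \ref{dhm_char}. Since $f_i$ lowers horizontal degree by $i$, the condition $f(F_mA)\subset F_mB$ is equivalent to the vanishing of all components with $i<0$. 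This identifies the underlying data on both sides.

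It then remains to see that the equations correspond. The symmetry relation of Remark \ref{infinity_morphisms} is homogeneous in horizontal degree, so it restricts summand-by-summand to the symmetry relation of Proposition \ref{dhm_char}. For the main $\infty$-morphism identity I would split each primitive operation into its horizontal components: the differentials $d$ of $A$ and $B$ into the $d_k$ and $d_j$, the operation $\mu^{(2)}$ of $A$ into the $\mu^{(2)}_k$, and the operation $\nu$ of $B$ into the $\nu_j$, together with the $f(\nu^u)$ into the $f_{k_u}(\nu^u)$. Collecting all terms whose output has a fixed horizontal degree drop $i$ then produces precisely the sums $\sum_{j+k=i}$ and $\sum_{j+k_1+\cdots+k_l=i}$ of Proposition \ref{dhm_char}. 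Because the total degree is unchanged by this refinement, every Koszul sign $\alpha_\sigma,\beta,\gamma,\lambda$ is the same in both formulations, so no change of variables or sign correction is required, exactly as in Corollary \ref{dha_char_2}. The converse assignment $f_i(\mu)\mapsto f(\mu)=\sum_i f_i(\mu)$ is filtration preserving by construction and reassembles the equations back into those of Remark \ref{infinity_morphisms}.

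The one point deserving attention is the bidegree bookkeeping governing the index sums: one must check that $\delta$ of bidegree $(-1,1)$ forces the $i$-th component of $f(\delta^i\otimes\mu)$ to have horizontal drop $i$, and that the operations entering the identity combine their horizontal drops additively, so that the constraints $j+k=i$ and $j+k_1+\cdots+k_l=i$ arise automatically. This is routine once the conventions are fixed, and is the direct morphism-level analogue of the object-level computation in Corollary \ref{dha_char_2}; it is not so much an obstacle as the single place where care with indices is needed.
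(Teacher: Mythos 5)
Your proposal is correct and is exactly the argument the paper has in mind: the paper omits this proof, stating only that it is analogous to Corollary \ref{dha_char_2}, and your decomposition of $f(\mu)$ into horizontal components $f_i(\mu)$, the identification of filtration preservation with the vanishing of the $i<0$ components, and the term-by-term matching of the equations of Remark \ref{infinity_morphisms} with those of Proposition \ref{dhm_char} (with no change of variables needed) is precisely that analogy carried out. Nothing further is required.
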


This is analogous to Corollary \ref{dha_char} so we also skip it. This correspondence is compatible with composition of (derived) $\infty$-morphisms, so it defines an equivalence of categories.

\begin{example}\label{examples_derived_infty-morphisms}
    In this remark, we explicitly describe derived $\infty$-morphisms for the operads in Example \ref{examples_derived_homotopy_algebras}.
    \begin{enumerate}
        \item A derived $\infty$-morphism of derived homotopy associative algebras $f\colon A\leadsto B$ consists of bidegree $(-i,r-1+i)$ maps $f_{i,r}\colon A^{\otimes^r}\to B$, $r\geq 1$, $i\geq 0$,
              satisfying
              \begin{multline*}
                  \sum_{j+k=i}d_j(f_{k,r}(x_1,\dots,x_r))-\sum_{s=1}^r(-1)^{r-1+\sum_{t=1}^{s-1}\abs{x_t}}\sum_{j+k=i}f_{j,r}(x_1,\dots,d_k(x_s),\dots,x_r)\\
                  =\sum_{s+t=r+1}\sum_{l=1}^s(-1)^{s-1+(s-l)(t-1)+t\sum_{n=1}^{l-1}\abs{x_n}}\sum_{j+k=i}f_{j,s}(x_{1},\dots,m_{k,t}(x_{l},\dots),\dots)\\
                  -\sum_{s=2}^r\hspace{-2pt}\sum_{\sum\limits_{l=1}^st_l=r}\hspace{-5pt}(-1)^{\sum\limits_{l=1}^s\big((t_l+1)(s-l)+(t_l-1)\hspace{-5pt}\sum\limits_{u=1}^{\sum\limits_{v=1}^{l-1}t_v}\hspace{-5pt}\abs{x_u}\big)}\hspace{-20pt}\sum_{j+k_1+\cdots+k_s=i}\hspace{-20pt}m_{j,s}(f_{k_1,t_1}(x_{1},\dots),\dots,f_{k_s,t_s}(\dots, x_{r})).
              \end{multline*}
              The morphism $f_{0,1}\colon A\to B$ is a chain map with respect to the differential $d_0$. It preserves the product up to the chain homotopy $f_{0,2}$. Moreover, $f_{0,1}$ commutes with the differential $d_1$ up to a chain homotopy defined from $f_{1,1}$, etc. These derived $\infty$-morphisms coincide with Sagave's maps between derived $\Oinf{\O{A}}$-algebras \cite{sagave_2010_dgalgebras_derived_algebras} up to the changes of sign conventions explained in Example \ref{examples_derived_homotopy_algebras} (1).
        \item A derived $\infty$-morphism of derived homotopy commutative algebras $f\colon A\leadsto B$ is a derived $\infty$-morphism of derived homotopy associative algebras such that the $f_{i,r}$ vanish on shuffles.
        \item A derived $\infty$-morphism of derived homotopy Lie algebras $f\colon L\leadsto L'$ consists of skew-symmetric bidegree $(-i,r-1+i)$ maps $f_{i,r}\colon L^{\otimes^r}\to L'$, $r\geq 1$, $i\geq 0$,
              satisfying
              \begin{multline*}
                  \sum_{j+k=i}d_j(f_{k,r}(x_1,\dots,x_r))-\sum_{j+k=i}(-1)^{r-1+\sum_{t=1}^{s-1}\abs{x_t}}f_{j,r}(x_1,\dots,d_k(x_s),\dots,x_r)\\
                  =\sum_{\substack{s+t=r+1\\\sigma\in\SS_{t,s-1}}}(-1)^{(s-1)t+\alpha_\sigma}\operatorname{sign}(\sigma)\sum_{j+k=i}f_{j,s}(\ell_{k,t}(x_{u_1},\dots,x_{u_t}),x_{v_1},\dots,x_{v_{s-1}})\\
                  -\hspace{-25pt}\sum_{\substack{2\leq s\leq n\\ t_1+\cdots+t_s=r\\\tau\in\SS_{t_1,\dots,t_s}\\w_{l,1}<w_{l+1,1}\text{ if }t_l=t_{l+1}}}\hspace{-25pt}(-1)^{\frac{s(s-1)}{2}+\sum\limits_{l=1}^{s-1}t_l(s-l)+\alpha_\tau+\lambda}\operatorname{sign}(\tau)\hspace{-10pt}\sum_{j+k_1+\cdots+k_s=i}\hspace{-10pt}\ell_{j,s}(f_{k_1,t_1}(x_{w_{1,1}},\dots),\dots\\
                  \dots,f_{k_s,t_s}(\dots,x_{w_{s,t_s}})).
              \end{multline*}
              A \emph{$(t_1,\dots,t_s)$-shuffle}, $t_l\geq 1$, is a permutation of the form
              \[\begin{pmatrix}
                      1       & \cdots & t_1       & \cdots\cdots & \sum_{l=1}^{s-1}t_l+1 & \cdots & \sum_{l=1}^{s}t_l \\
                      w_{1,1} & \cdots & w_{1,t_1} & \cdots\cdots & w_{s,1}               & \cdots & w_{s,t_s}
                  \end{pmatrix}\]
              with $w_{l,1}<\cdots <w_{l,t_l}$ for each $l$. This is the obvious generalization of $(p,q)$-shuffles. The set $\SS_{t_1,\dots,t_s}$ in the last summation is the set of $(t_1,\dots,t_s)$-shuffles such that, if $t_l=t_{l+1}$ then $w_{l,1}<w_{l+1,1}$.
              Moreover, $\lambda$ consists of adding $(t_u-1)\abs{x_{w_{p,q}}}$ for all $p<u$. Compare \cite[Definition 2.3]{allocca_2014_homomorphisms_infty_modules}.
        \item A derived $\infty$-morphism of derived homotopy $\unit$-algebras is a \emph{twisted morphism} $f\colon X\leadsto Y$ between twisted complexes. It consists of a family of module morphisms
              \[f_i\colon X_{p,q}\To X_{p-i,q+i},\quad i\geq 0,\]
              satisfying
              \[\sum_{j+k=i}f_jd_k=\sum_{j+k=i}d_jf_k,\quad i\geq 0.\]
              The composition of $f$ with another twisted morphism $g\colon B\leadsto C$ is given by
              \[(gf)_i=\sum_{j+k=i}g_jf_k.\]
    \end{enumerate}
\end{example}

\begin{remark}
    For $\O{O}$ graded Koszul, a derived $\infty$-morphism of derived $\Oinf{\O{O}}$-algebras $f\colon A\leadsto B$ has an \emph{underlying twisted morphism} $f(1)\colon A\leadsto B$ given by
    \[f(1)_i\colon A_{p,q}\To B_{p-i,q+i},\quad i\geq0.\]
\end{remark}

\section{Minimal models}\label{section:minimal_models}

Throughout this section $\O{O}=\P{E}{R}$ stands for a graded reduced quadratic Koszul operad which, arity-wise, is a projective module over the corresponding symmetric group. Hence we can apply Theorem \ref{main_main}.

\begin{definition}\label{E2-equivalence}
    A derived $\infty$-morphism between derived homotopy $\O{O}$-algebras $f\colon A\leadsto B$ is an \emph{$E_2$-equivalence} if, regarded as a filtration-preserving $\infty$-morphism between split filtered $\Oinf{\O{O}}$-algebras (see Corollary \ref{dhm_agrees}), it induces an isomorphism between the $E^2$ pages of the spectral sequences associated to the filtrations.
\end{definition}

A \textit{differential graded $\O{O}$-algebra} is an $\O{O}$-algebra in $\chain$.

\begin{definition}\label{minimal_models}
    A twisted complex $X$ is \emph{minimal} if it has trivial vertical differential $d_0=0$. A $\Oinf{\D{\O{O}}}$-algebra is \emph{minimal} if its underlying twisted complex is minimal (see Proposition \ref{dha_char}).

    A \emph{minimal model} for a differential graded $\O{O}$-algebra $A$ is an $E^2$-equivalence $f\colon \minimal{A}\leadsto A$ whose source is a $\kk$-projective minimal derived $\Oinf{\O{O}}$-algebra.
\end{definition}

\begin{remark}\label{minimal_remark_1}
    Since $E^2$-equivalences are the inverse image of isomorphisms by a functor, they are closed under composition and, moreover, they satisfy the 2-out-of-3 property.

    The homology of a graded complex $X$ is denoted by $H_*(X)$. If $A$ is a $\Oinf{\D{\O{O}}}$-algebra, then $H_*(A)$ is a $\D{\O{O}}$-algebra since $H_*(\Oinf{\D{\O{O}}})=\D{\O{O}}$ by Koszulity.

    If $A$ is minimal then $H_*(A)=A$, hence it also carries an underlying $\D{\O{O}}$-algebra structure. Regarded as an $\O{O}$-algebra in $\bichainu$ (see Proposition \ref{dO-algebra}), the vertical differential is trivial $d_v=0$, the horizontal differential is $d_h=d_1$, and given a generator $\mu\in E$ of $\O{O}$
    \[\mu(x_1,\dots,x_r)=(s\mu)_0(x_1,\dots,x_r).\] Here, the right hand side is part of the $\Oinf{\D{\O{O}}}$-algebra structure as described in Proposition \ref{dha_char}, and the left hand side is the $\O{O}$-algebra structure which is part of the underlying $\D{\O{O}}$-algebra structure

    If $f\colon A\leadsto B$ is an $\Oinf{\D{\O{O}}}$-algebra $\infty$-morphism, then the map $f(1)_0\colon A\to B$ in $\grchain$ induces a $\D{\O{O}}$-algebra morphism $H_*(A)\to H_*(B)$. In particular, if $A$ and $B$ are minimal $f(1)_0\colon A\to B$ is a morphism between the previous underlying $\D{\O{O}}$-algebra structures.
\end{remark}

\begin{proposition}\label{minimal_exist}
    Any differential graded $\O{O}$-algebra $A$ has a minimal model.
\end{proposition}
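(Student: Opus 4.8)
The plan is to produce $\minimal{A}$ by transferring the strict algebra structure of a Cartan--Eilenberg resolution of $A$ onto its vertical homology. First I would regard $A$, via the inclusion $\chain\hookrightarrow\bichain$ of Remark~\ref{vertical_inclusion}, as an $\O{O}$-algebra in bicomplexes concentrated in horizontal degree $0$ with $d_h=0$. Since $\O{O}=\P{E}{R}$ is graded with each $\O{O}(r)$ being $\SS_r$-projective, Proposition~\ref{semi-model} supplies the transferred semi-model structure on $\algebras{\bichainCE}{\O{O}}$, and I take a cofibrant resolution $p\colon\cofres{A}\to A$. By Remark~\ref{is_cartan_eilenberg} this $\cofres{A}$ is a Cartan--Eilenberg resolution of $A$: its underlying bicomplex is $\kk$-projective and its vertical cycles, vertical boundaries, and vertical homology are all $\kk$-projective. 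By Proposition~\ref{dO-algebra} I may view $\cofres{A}$ as a strict, bounded $\D{\O{O}}$-algebra whose graded-complex differential is $d_v$.

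The crucial step is to contract the underlying graded complex $(\cofres{A},d_v)$ onto its vertical homology, which I denote $\minimal{A}$ and equip with zero differential. Working in each horizontal degree $p\geq 0$ separately, so that the contraction lives in $\grchain$ and preserves the horizontal degree, I would use the $\kk$-projectivity of cycles and homology: the sequence $0\to B^v\to Z^v\to H^v\to 0$ splits, so the boundaries $B^v$ are a summand of the projective cycles $Z^v$ and hence projective, and then $0\to Z^v\to \cofres{A}\to B^v\to 0$ splits as well. These splittings assemble into a contraction $(p',i,h)$ with $p'i=\id{}$, $\id{}-ip'=d_vh+hd_v$, and the usual side conditions, all in $\grchain$. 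Note that no vertical boundedness is required, precisely because cycles and homology are projective.

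Next I would invoke the homotopy transfer theorem \cite[\S10.3]{loday_vallette_2012_algebraic_operads}. Since $\O{O}$ is graded Koszul and arity-wise $\SS$-projective, $\D{\O{O}}$ is Koszul with $\kk$-projective dual $\K{\D{\O{O}}}$ by Theorem~\ref{main_main}, so the tree-sum formulas of the transfer theorem are defined over $\kk$. Applying it in $\grchain$ to the strict $\D{\O{O}}$-algebra $\cofres{A}$ and the contraction above yields a $\Oinf{\D{\O{O}}}$-algebra structure on $\minimal{A}$ together with an $\infty$-quasi-isomorphism $i_\infty\colon\minimal{A}\leadsto\cofres{A}$ whose linear (arity-one) part is $i$. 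Because $\minimal{A}$ is the vertical homology it has $d_0=0$, hence it is minimal; it is bounded and $\kk$-projective because $\cofres{A}$ is bounded with $\kk$-projective vertical homology and the contraction preserves the horizontal degree. Thus $\minimal{A}$ is a $\kk$-projective minimal derived $\Oinf{\O{O}}$-algebra. Composing (in the sense of Remark~\ref{infinity_morphisms}) with the strict map $p$ gives a derived $\infty$-morphism $f=p\circ i_\infty\colon\minimal{A}\leadsto A$.

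It remains to check that $f$ is an $E_2$-equivalence. The spectral sequence of the horizontal filtration (Definition~\ref{E2-equivalence}) has $E^0=(\cofres{A},d_0=d_v)$, so its $E^1$-page is the vertical homology; the linear part $i$ of $i_\infty$ induces the identity $\minimal{A}\to\minimal{A}$ on $E^1$, so $i_\infty$ is already an $E^1$-, hence an $E_2$-equivalence. The strict map $p$ is an $E_2$-equivalence because its underlying bicomplex morphism is a weak equivalence of $\bichainCE$, i.e.\ an $E^2$-equivalence, and the horizontal-filtration spectral sequence is exactly the bicomplex spectral sequence. By the $2$-out-of-$3$ property of $E_2$-equivalences (Remark~\ref{minimal_remark_1}), $f$ is an $E_2$-equivalence and therefore a minimal model in the sense of Definition~\ref{minimal_models}. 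I expect the main obstacle to be the bookkeeping that guarantees the transferred structure and the comparison $\infty$-morphism genuinely stay in the bounded, horizontal-degree-preserving setting and that the transfer theorem applies verbatim over $\kk$; the existence of the contraction, although it is where the Cartan--Eilenberg hypothesis is essential, reduces to the splitting argument above once projectivity of the vertical cycles and homology is in hand.
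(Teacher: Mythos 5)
Your proposal is correct and follows essentially the same route as the paper: take a cofibrant resolution $\cofres{A}\to A$ in $\algebras{\bichainCE}{\O{O}}$, contract its underlying graded complex onto the ($\kk$-projective) vertical homology, apply the homotopy transfer theorem for $\D{\O{O}}$, and conclude by the $2$-out-of-$3$ property for $E^2$-equivalences. The only (harmless) divergence is in producing the contraction: you split the short exact sequences of vertical cycles, boundaries, and homology directly from the Cartan--Eilenberg projectivity, whereas the paper obtains it model-categorically from the cofibrancy of the underlying graded complex together with the projectivity of the homology.
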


\begin{proof}
    Any cofibrant object $X$ in $\chain$ (endowed with the projective model structure) with degree-wise projective homology $H_*(X)$ is chain homotopy equivalent to $H_*(X)$ endowed with the trivial differential. Indeed, $H_*(X)$ is also cofibrant in $\chain$ under these assumptions and all objects in $\chain$ are fibrant. Therefore, any quasi-isomorphism $H_*(X)\to X$ is a homotopy equivalence. Such a quasi-isomorphism (a cycle selection map) can be constructed because $H_*(X)$ is projective. Moreover, any homotopy equivalence $H_*(X)\to X$ is the inclusion of a deformation retract because chain homotopic endomorphisms of $H_*(X)$ are equal since $H_*(X)$ has trivial differential. We can actually make it part of a contraction in the sense of \cite[Definition 2.1 and Remark 2.1]{berglund_2014_homological_perturbation_theory}.
    This obviously extends to the category of graded complexes $\grchain=\chain^\Z$ with the product model structure.

    Let $\cofres{A}\to A$ be a cofibrant resolution in the semi-model category $\algebras{\bichainCE}{\O{O}}$ of Proposition \ref{semi-model}. The bicomplex underlying $\cofres{A}$ is a Cartan--Eilenberg resolution, see Remark \ref{is_cartan_eilenberg}, so $\cofres{A}$ has projective vertical homology. We simply denote the vertical homology by $H_*(\cofres{A})$, since it is the homology of the underlying graded complex.

    The graded complex underlying $\cofres{A}$ is cofibrant because the graded complex morphisms underlying the generating cofibrations in $\bichainCE$ are cofibrations in $\grchain$, see \cite[\S4]{muro_roitzheim_2019_homotopy_theory_bicomplexes}, and colimits are computed point-wise both in $\bichain$ and $\grchain$. Therefore, we can take a map $i\colon H_*(\cofres{A})\to \cofres{A}$ in $\grchain$ which is part of a contraction. We can now invoke the homotopy transfer theorem, see \cite[Theorem 10.3.1]{loday_vallette_2012_algebraic_operads} and \cite[Theorem 1.3]{berglund_2014_homological_perturbation_theory}. This theorem endows $H_*(\cofres{A})$ with a minimal derived homotopy $\O{O}$-algebra structure $\minimal{A}$ and enhances the map of graded complexes $i\colon H_*(\cofres{A})\to \cofres{A}$ to a derived $\infty$-morphism $i\colon\minimal{A}\leadsto \cofres{A}$. This derived $\infty$-morphism is an $E^2$-equivalence. It actually induces an isomorphism on the $E^1$ page of the corresponding spectral sequences, since this term is the vertical homology. If we compose it with $\cofres{A}\to A$ we obtain the desired minimal model $\minimal{A}\leadsto A$.

    The homotopy transfer theorem is purely combinatorial and does not depend on the underlying ground ring. We will however need further hypotheses later when we want to find an left inverse for $i\colon\minimal{A}\leadsto A$. Such a left inverse will be needed to ensure the essential uniqueness of minimal models.
\end{proof}

\begin{remark}\label{minimal_remark}
    The proof of Proposition \ref{minimal_exist} provides a method for the construction of minimal models.
    Cofibrant resolutions in $\algebras{\bichainCE}{\O{O}}$ are usually huge because their underlying bigraded $\O{O}$-algebras are free. Nevertheless, in practice, it is often possible to find an $E^2$-equivalence $\tilde{A}\to A$ in $\algebras{\bichainCE}{\O{O}}$ with $\tilde{A}$ smaller than any $\cofres{A}$ whose homology $H_*(\tilde{A})$ is $\kk$-projective and $\tilde{A}$ contracts onto $H_*(\tilde{A})$, and this suffices to enhance $H_*(\tilde{A})$ to a minimal model. This is how we obtain the minimal models in the examples below.
\end{remark}

\begin{example}\label{example_dugger_shipley}
    For $p\in\Z=\kk$ a prime, Dugger and Shipley consider in \cite{dugger_shipley_2009_curious_example_triangulatedequivalent} the differential graded unital associative algebra
    \[A=\frac{\Z\langle e,x^{\pm1}\rangle}{(e^2,ex+xe-x^2)},\qquad \abs{e}=\abs{x}=1,\]
    with differential
    \begin{align*}
        d(e) & =p, & and d(x) & =0.
    \end{align*}
    The homology is
    \[H_*(A)=\Z/(p)\langle x^{\pm1}\rangle.\]

    Consider the unital associative algebra in $\bichain$ with underlying bigraded algebra
    \[\minimal{A}=\frac{\Z\langle x^{\pm1}, c\rangle}{(c^2,cx+xc)},\qquad \abs{x}_b=(0,1),\quad\abs{c}_b=(1,0).\]
    The vertical differential is trivial $d_v=0$ and the horizontal differential is given by
    \begin{align*}
        d_h(c) & =-p, &
        d_h(x) & =0.
    \end{align*}
    Since $c$ is a square-zero element in the center, it is easy to see that the horizontal homology of $\minimal{A}$ coincides with $H_*(A)$.

    We can extend $\minimal{A}$ to a (minimal) derived homotopy $\O{A}$-algebra whose higher operations vanish except for $m_{1,2}$, i.e.~$d_1=d_h$ is the horizontal differential, $m_{0,2}$ is the binary associative product, $d_i=0$ for $i\neq 1$, and $m_{i,r}=0$ for $(i,r)\neq (0,2),(1,2)$. Moreover, $m_{1,2}$ is given by
    \begin{align*}
        m_{1,2}(x^i,x^j)   & =0, \quad i,j\in\Z;                            \\
        m_{1,2}(cx^i,x^j)  & =0, \quad i,j\in\Z;                            \\
        m_{1,2}(x^i,cx^j)  & =\left\lbrace\begin{array}{ll}
            0,         & i\text{ even}, \\
            x^{i+j+1}, & i\text{ odd};
        \end{array}\right.  \\
        m_{1,2}(cx^i,cx^j) & =\left\lbrace\begin{array}{ll}
            0,          & i\text{ even}, \\
            cx^{i+j+1}, & i\text{ odd}.
        \end{array}\right.
    \end{align*}

    We also consider the map $f\colon \minimal{A}\to A$ of bigraded unital associative algebras given by $f(x)=x$ and $f(c)=0$. This map is not compatible with horizontal differentials, but we can extend it to a derived $\infty$-morphism $f\colon \minimal{A}\leadsto A$ of derived homotopy $\O{A}$-algebras whose only non-trivial pieces are $f_{0,1}$ (the map $\minimal{A}\to A$ above) and $f_{1,1}$, i.e.~$f_{i,n}=0$ for $(i,n)=(0,1),(1,1)$, and
    \begin{align*}
        f_{1,1}(x^n)  & =0;    \\
        f_{1,1}(cx^n) & =ex^n.
    \end{align*}
    We do not include here an explicit computation showing that the derived $\infty$-morphism equations hold. However, it is clear that $f$ is an $E^2$-equivalence because of the previous computation of $H_*(A)$ and the horizontal homology of $\minimal{A}$.

    We would also like to indicate how we have obtained this minimal model by the method described in the proof of Proposition \ref{minimal_exist} and in Remark \ref{minimal_remark}.

    We consider the unital associative algebra in $\bichain$ with underlying bigraded algebra
    \[\tilde{A}=\frac{\Z\langle x^{\pm1},a,b,c\rangle}{(a^2,c^2,ax+xa-x^2,ba-ab,bx-xb,cx+xc,ac+ca,bc-cb)}.\]
    The generators have bidegrees
    \begin{align*}
        \abs{x}_b=\abs{a}_b & =(0,1), &
        \abs{b}_b           & =(0,0), &
        \abs{c}_c           & =(1,0).
    \end{align*}
    The first two relations are nilpotency relations, the third one is a twisted commutativity relation, and the rest are plain commutativity relations (the middle signs correspond to the Koszul sign rule). The bicomplex differentials are defined by
    \begin{align*}
        d_v(x) & =0,    &
        d_v(a) & =b,    &
        d_v(b) & =0,    &
        d_v(c) & =0,    &
        d_h(c) & = b-p.
    \end{align*}
    The horizontal differential of horizontal degree $0$ elements must be trivial for degree reasons. The relations $d_v^2=0=d_h^2$ and $d_vd_h+d_hd_v=0$ are clear on generators.

    We define the map $\tilde{A}\to A$ of unital associative algebras in $\bichain$ as
    \begin{align*}
        x & \mapsto x, &
        a & \mapsto e, &
        b & \mapsto p, &
        c & \mapsto 0.
    \end{align*}

    The vertical homology $H_*(\tilde{A})$ is $\minimal{A}$ above as a unital associative algebra in $\bichain$.
    We have a contraction in $\grchain$
    \begin{center}
        \begin{tikzcd}
            H_*(\tilde{A})=\minimal{A}\ar[r, yshift=2, "i"]&\tilde{A}\ar[l, yshift=-2, "g"]\ar[loop right, "h"]
        \end{tikzcd}
    \end{center}
    where $i$, as a map of unital associative bigraded algebras, is the obvious inclusion. Beware that this algebra map is not compatible with horizontal differentials (it is compatible with vertical differentials though). A $\kk$-basis of $\minimal{A}$ is $\{x^n,cx^n\}_{n\in\Z}$ and a $\kk$-basis of $\tilde{A}$ is $\{b^kx^n,b^kcx^n,ab^kx^n,ab^kcx^n\}_{n\in\Z,k\geq 0}$.
    The retraction $g$ is the identity on the basis elements $x^n,cx^n$, $n\in\Z$, and trivial on the rest, and the homotopy $h$ is given by
    \begin{align*}
        h(x^n)=h(cx^n)                 & =0;                      \\
        h(b^kx^n)                      & =ab^{k-1}x^n,\quad k>0;  \\
        h(b^kcx^n)                     & =ab^{k-1}cx^n,\quad k>0; \\
        h(ab^kx^{n-1})=h(ab^kcx^{n-1}) & =0,\quad k\geq 0.
    \end{align*}
    A direct application of the homotopy transfer theorem for the operad $\D{\O{A}}$ yields the previous minimal model.

    Since we are not carrying out all computations step by step, the $\O{A}$-algebra $\tilde{A}$ may seem counter-intuitive. However, the idea behind it is very easy. The complex underlying $A$ is a direct sum of shifted copies of
    \[\cdots\to 0\to {}^e\Z\stackrel{p}{\To}{}^1\Z\to 0\to \cdots.\]
    Here, the left superscripts indicate the generators of one of the copies, the rest are obtained by multiplication with $x^n$, $n\in \Z$.
    The smallest Cartan--Eilenberg resolution of this complex in $\bichain$ is
    \begin{equation}\label{a_cartan-eilenberg_resolution}
        \begin{tikzcd}[execute at end picture={\draw[-, ultra thick, opacity = .5] ($(A)+(1,.3)$) to ($(B)+(1,-.2)$);}]
            |[alias=A]|\vdots\ar[d]&\vdots\ar[d]&\vdots\ar[d]\\
            {}^e\Z\ar[d,"p"']&{}^a\Z\ar[d,"\left(\begin{smallmatrix}
                    0\\1
                \end{smallmatrix}\right)"]\ar[l,near start, "1"']&0\ar[d]\ar[l]&\cdots\ar[l]\\
            {}^1\Z\ar[d]&{}^1\Z\oplus{}^b\Z\ar[d]\ar[l,near end, "{(1,p)}"]&{}^c\Z\ar[d]\ar[l,"\left(\begin{smallmatrix}
                    -p\\1
                \end{smallmatrix}\right)"]&\cdots\ar[l]\\
            |[alias=B]|\vdots&\vdots&\vdots\\
        \end{tikzcd}
    \end{equation}
    The complex is to the left of the vertical line, and the Cartan--Eilenberg resolution is to the right. The non-depicted part is trivial. The $\O{A}$-algebra $\tilde{A}$ has been obtained by making an associative algebra out of this, incorporating the unit $x$, and imposing as many relations as possible to make the underlying bicomplex similar to shifted copies of this Cartan--Eilenberg resolution.

    \begin{example}\label{commutative_minimal}
        Let $\kk=\Q[t]$ and let $\O{C}$ be the (non-unital) commutative operad. We denote by $S$ the free symmetric algebra functor and consider the symmetric graded algebra
        \[A=\frac{S(x,y,x_t,y_t)}{(x^2,y^2,xy,xx_t,yy_t,x_ty)},\qquad\abs{x}=\abs{y}=2,\quad\abs{x_t}=\abs{y_t}=3,\]
        endowed with the differential defined by
        \begin{align*}
            d(x)   & =d(y)=0, &
            d(x_t) & =tx,     &
            d(y_t) & =ty.
        \end{align*}
        The compatibility of the differential with the relations is easy to check.
        A $\kk$-linear basis of $A$ is
        $\{x,y,x_t,y_t,xy_t,x_ty_t\}$.
        Hence $H_*(A)$ is a $\Q$-vector space with basis
        $\{x,y,xy_t\}$
        and trivial product.

        In this case, we have a minimal model with underlying free bigraded $\kk$-module
        \begin{align*}
            \minimal{A}         & =\kk\cdot\{x,y,z,c_x,c_y,c_z\}, &
            \abs{x}_b=\abs{y}_b & =(0,2),                         &
            \abs{z}_b           & =(0,5),                                                               \\
                                &                                 & \abs{c_x}_b=\abs{c_y}_b & =(1,2), &
            \abs{c_z}_b         & =(1,5),
        \end{align*}
        trivial vertical differential $d_v=0$, and horizontal differential
        \begin{align*}
            d_h(x)=d_h(y)=d_h(z) & =0,   &
            d_h(c_x)             & =-tx, &
            d_h(c_y)             & =-ty, &
            d_h(c_z)             & =-tz.
        \end{align*}
        All derived $\Oinf{\O{C}}$-algebra operations in $\minimal{A}$ are trivial except for $d_1=d_h$ and $m_{1,2}$ (even the product $m_{0,2}$ is trivial). The operation $m_{1,2}$ is determined by the formulas
        \begin{align*}
            m_{1,2}(x,c_x)   & =0,   &
            m_{1,2}(x,c_y)   & =z,   &
            m_{1,2}(c_x,y)   & =0,   &
            m_{1,2}(y,c_y)   & =0,     \\
            m_{1,2}(c_x,c_x) & =0,   &
            m_{1,2}(c_x,c_y) & =c_z, &
            m_{1,2}(c_y,c_y) & =0.
        \end{align*}
        The remaining cases either follow from the fact that $m_{1,2}$ is skew-symmetric, since it is a binary operation which  vanishes on shuffles, or they vanish for degree reasons.

        The maps $f_{i,r}$ defining the $E^2$-equivalence $f\colon\minimal{A}\leadsto A$ are trivial except for $f_{0,1}$ and $f_{1,1}$, which are given by
        \begin{align*}
            f_{0,1}(x)   & =x,      &
            f_{0,1}(y)   & =y,      &
            f_{0,1}(z)   & =xy_t,     \\
            f_{0,1}(c_x) & =x_t,    &
            f_{0,1}(c_y) & =y_t,    &
            f_{0,1}(c_z) & =x_ty_t.
        \end{align*}
        The remaining cases vanish for degree reasons.

        For this computation, we have used the version of the Cartan--Eilenberg resolution \eqref{a_cartan-eilenberg_resolution} for $\Q[t]$, consisting of replacing $\Z$ with $\Q[t]$ and $p$ with $t$. With this in mind, we can consider the bigraded commutative algebra $\tilde{A}$ with generators
        \begin{equation}\label{commutative_generators}
            \underbrace{u_x,b_x,u_y,b_y}_{(2,0)},\underbrace{a_x,a_y}_{(3,0)},\underbrace{c_x,c_y}_{(1,2)},
        \end{equation}
        with the indicated bidegree. The relations are that all binary products vanish except for
        \begin{equation}\label{commutative_non_relations}
            u_xa_y,b_xa_y,c_xa_y,a_xa_y
        \end{equation}
        and their symmetrics. The map $\tilde{A}\to A$ is in this case given by
        \begin{align*}
            u_x & \mapsto x,   &
            b_x & \mapsto tx,  &
            a_x & \mapsto x_t, &
            u_y & \mapsto y,   &
            b_y & \mapsto ty,  &
            a_y & \mapsto y_t,
        \end{align*}
        and zero on $c_x,c_y$.

        A $\kk$-linear basis of $\tilde{A}$ is given by the elements \eqref{commutative_generators} and \eqref{commutative_non_relations}. Hence, the underlying bicomplex of $\tilde{A}$ consists of three shifted copies of \eqref{a_cartan-eilenberg_resolution} for $\Q[t]$ and the vertical homology $H_*(\tilde{A})$ is $\minimal{A}$ above as a commutative algebra in $\bichain$.

        We have a contraction in $\grchain$
        \begin{center}
            \begin{tikzcd}
                H_*(\tilde{A})=\minimal{A}\ar[r, yshift=2, "i"]&\tilde{A}\ar[l, yshift=-2, "g"]\ar[loop right, "h"]
            \end{tikzcd}
        \end{center}
        with
        \begin{align*}
            i(x)   & =u_x,    &
            i(y)   & =u_y,    &
            i(z)   & =u_xa_y, &
            i(c_x) & =c_x,    &
            i(c_y) & =c_y,    &
            i(c_z) & =c_xa_y.
        \end{align*}
        Note that $i$ preserves products. The retraction $g$ is the identity on the image of $i$ and trivial on the rest of the basis, and the homotopy $h$ is given by the formulas
        \begin{align*}
            h(b_x)    & =a_x,    &
            h(b_y)    & =a_y,    &
            h(b_xa_y) & =a_xa_y, &
            hi        & =0,      &
            h^2       & =0.
        \end{align*}
        The minimal model $\minimal{A}$ is obtained by direct application of the homotopy transfer theorem for the operad $\D{\O{C}}$ to this contraction.
    \end{example}

    \begin{example}\label{Lie_minimal}
        Let $\kk=\Q[t]$ again and let $\O{L}$ be the Lie operad. We consider the graded Lie algebra
        generated by $x,y,x_t,y_t$ in degrees $\abs{x}=\abs{y}=2$, $\abs{x_t}=\abs{y_t}=3$ with relations all possible triple brackets and all binary brackets except for $[x,y_t],[x_t,y_t]$ and their symmetrics.
        A $\kk$-linear basis of $A$ is therefore
        $\{x,y,x_t,y_t,[x,y_t],[x_t,y_t]\}$, and $H_*(A)$ is a $\Q$-vector space with basis
        $\{x,y,[x,y_t]\}$
        and trivial bracket.

        The same minimal model $\minimal{A}$ as in Example \ref{commutative_minimal} works in this case, with trivial Lie bracket and $\ell_{1,2}$ instead of $m_{1,2}$ in this case. The $E^2$-equivalence $f\colon \minimal{A}\leadsto A$ is also defined by the same formulas.
    \end{example}

\end{example}

\section{The (co)bar construction for derived homotopy algebras}\label{cobar_section}

In this section $\O{O}=\P{E}{R}$ is a graded reduced quadratic Koszul operad. Moreover, either $\O{O}$ is non-symmetric (or rather the symmetrization of a non-symmetric operad) or $\Q\subset\kk$. In particular $\O{O}$ and $\K{\O{O}}$ are arity-wise projective over the corresponding symmetric group and we can apply Theorem \ref{main_main}.

We have already reached our goal of constructing minimal models for differential graded $\O{O}$-algebras, regardless of the ground ring. These minimal models are derived homotopy $\O{O}$-algebras. Now we would like to show that minimal models are essentially unique and, moreover, that they can be strictified so that we can recover the original differential graded $\O{O}$-algebra up to quasi-isomorphism. In the non-derived case, this is done by means of the bar-cobar adjunction. We will use a modified bar-cobar adjunction for $\D{\O{O}}$- and $\Oinf{\D{\O{O}}}$-algebras. In the derived setting, the problem is that if we start with a derived homotopy $\O{O}$-algebra and apply the bar and the cobar constructions, the resulting $\D{\O{O}}$-algebra is not bounded. We have to truncate it in a clever way so as not to change its homotopy type.

We borrow from \cite[\S0.2]{fresse_2009_operadic_cobar_constructions} the concept of perturbed complex, called twisted complex therein, but this has a different meaning here.

\begin{definition}
    A \emph{perturbed complex} is a complex $X$ with differential $d$ and a degree $-1$ endomorphism $\partial\colon X\to X$ of the underlying graded module, often called \emph{perturbation}, such that $(d+\partial)^2=0$, so the underlying graded module of $X$ with the new differential $d+\partial$ is another complex that we denote by $(X,\partial)$.
\end{definition}

\begin{remark}
    This notion extends from $\chain$ to $\grchain$ and to the corresponding categories of $\SS$-modules in the obvious way. Since $d^2=0$, $(d+\partial)^2=0$ is equivalent to $d\partial+\partial d+\partial^2=0$. Quite often $\partial$ is itself a differential, $\partial^2=0$, so the previous equation reduces to $d\partial+\partial d=0$. Some of our perturbed complexes will have additional algebraic structure, hence we will talk about perturbed  (co)algebras, etc.
\end{remark}

\begin{definition}
    A \emph{quasi-cofree $\K{\D{\O{O}}}$-coalgebra} in $\grchain$ is a perturbed $\K{\D{\O{O}}}$-coalgebra of the form $(\K{\D{\O{O}}}\circ X,\partial)$, where $X$ is a graded complex and $\K{\D{\O{O}}}\circ X$ is a cofree $\K{\D{\O{O}}}$-coalgebra (i.e.~the structure maps are given by those of the cooperad $\K{\D{\O{O}}}$).
\end{definition}

The conditions on $\partial$ for $(\K{\D{\O{O}}}\circ X,\partial)$ to be a $\K{\D{\O{O}}}$-coalgebra can be found in \cite[Propostion 4.1.4]{fresse_2009_operadic_cobar_constructions}. Moreover, by \cite[Propostion 4.1.5]{fresse_2009_operadic_cobar_constructions} or \cite[Proposition 11.4.1]{loday_vallette_2012_algebraic_operads}, there is an equivalence
\begin{equation}\label{dO_infinity_bar}
    \bbar_{\D{\O{O}}}\colon\inftyalgebras{\grchain}{\Oinf{\D{\O{O}}}}\To\coalgebrasqf{\grchain}{\K{\D{\O{O}}}}
\end{equation}
from the category of $\Oinf{\D{\O{O}}}$-algebras and $\infty$-morphisms to the category of quasi-cofree $\K{\D{\O{O}}}$-coalgebras. This equivalence is given by $\bbar_{\D{\O{O}}}A=(\K{\D{\O{O}}}\circ A,\partial^A)$ for a certain perturbation $\partial^A$.

Let us describe the functor $\bbar_{\D{\O{O}}}$ on the category $\inftyalgebras{}{\Oinf{\D{\O{O}}}}$ of derived homotopy $\O{O}$-algebras and derived $\infty$-morphisms between them, which is a full subcategory of $\inftyalgebras{\grchain}{\Oinf{\D{\O{O}}}}$ by definition.

We split the Sweedler notation in for the infinitesimal decomposition of $\O{O}$ in \eqref{sweedler_infinitesimal_decomposition} as follows,
\[\Delta_{(1)}(\mu)=\sum_{s=1}^r\mu\circ_s1+\sum_{(\mu)''}(\mu^{(1)}\circ_{l}\mu^{(2)})\cdot\sigma.\]
Here $\mu\in\O{O}(r)$ and $(\mu)''$ stands for the summands where $\mu^{(2)}$ has positive weight.

If $A$ is a derived homotopy $\O{O}$-algebra, then $\partial^A$ is given by the following formula,
\begin{multline}\label{bar_algebras_perturbation}
    \partial^A((\delta^i;\mu);x_1,\dots,x_r)=
    \sum_{s=1}^r(-1)^{\beta}\sum_{\substack{j+k=i\\k>0}}((\delta^j;\mu);x_1,\dots,d_k(x_s),\dots,x_r)\\
    +\sum_{(\mu)''}(-1)^{\gamma}\sum_{j+k=i}((\delta^j;\mu^{(1)}),x_{\sigma^{-1}(1)},\dots,\mu_k^{(2)}(x_{\sigma^{-1}(l)},\dots),\dots).
\end{multline}
%
%
Here $\beta$ and $\gamma$ are as in \eqref{beta_gamma}.

If $A$ is just a derived $\O{O}$-algebra, formula \eqref{bar_algebras_perturbation} simplifies to
\begin{multline}\label{bar_algebras_perturbation_strict}
    \partial^A((\delta^i;\mu);x_1,\dots,x_r)=
    \sum_{s=1}^r(-1)^{\beta}((\delta^{i-1};\mu);x_1,\dots,d_h(x_s),\dots,x_r)\\
    +\sum_{(\mu)''}(-1)^{\gamma}((\delta^i;\mu^{(1)});x_{\sigma^{-1}(1)},\dots,\kappa(\mu^{(2)})(x_{\sigma^{-1}(l)},\dots),\dots).
\end{multline}
Here $\kappa\colon\K{\O{O}}\to\O{O}$ is the canonical twisting morphism.

Given an $\infty$-morphism $f\colon A\leadsto B$ between derived homotopy $\O{O}$-algebras, the induced morphism $\bbar_{\D{\O{O}}}(f)\colon \bbar_{\D{\O{O}}}(A)\to \bbar_{\D{\O{O}}}(B)$ is defined as
\begin{multline}\label{bar_algebras_morphism}
    \bbar_{\D{\O{O}}}(f)((\delta^i;\mu);x_1,\dots,x_r)=\\
    \sum_{[\mu]}\sum_{j+k_1+\cdots+k_l=i}(-1)^{\gamma}((\delta^j;\nu);f(\nu^{1}_{k_1})(x_{\tau^{-1}(1)},\dots),\dots,f(\nu^{l}_{k_l})(\dots, x_{\tau^{-1}(r)})).
\end{multline}
Here we use the Sweedler formula for the decomposition of $\K{\D{\O{O}}}$ in \eqref{sweedler_decomposition}.

If we restrict $\bbar_{\D{\O{O}}}$ to the subcategory of $\D{\O{O}}$-algebras in $\grchain$, then it is the right adjoint of an adjoint pair $\cobar_{\D{\O{O}}}\dashv\bbar_{\D{\O{O}}}$,
\begin{equation}\label{bar_cobar_adjunction}
    \coalgebrasqf{\grchain}{\D{\O{O}}}\mathop{\rightleftarrows}\limits^{\cobar_{\D{\O{O}}}}_{\bbar_{\D{\O{O}}}}\algebras{\grchain}{\D{\O{O}}},
\end{equation}
such that $\cobar_{\D{\O{O}}}B=(\D{\O{O}}\circ B,\partial)$ for a certain perturbation $\partial$, see \cite[\S11.3 and \S11.5.3]{loday_vallette_2012_algebraic_operads}. Here, $\D{\O{O}}\circ B$ is a free $\D{\O{O}}$-algebra. Since \eqref{dO_infinity_bar} is an equivalence, it suffices to define $\cobar_{\D{\O{O}}}\bbar_{\D{\O{O}}}A$ for $A$ a $\Oinf{\D{\O{O}}}$-algebra. In this case $\cobar_{\D{\O{O}}}\bbar_{\D{\O{O}}}A=(\D{\O{O}}\circ\bbar_{\D{\O{O}}}A,d\circ\id{A})$ where $d$ is the differential of the Koszul complex $\D{\O{O}}\circ_{\zeta}\K{\D{\O{O}}}$.

Given a $\Oinf{\D{\O{O}}}$-algebra $A$, we have a natural $\infty$-morphism
\begin{equation}\label{infinity_unit}
    A\leadsto\cobar_{\D{\O{O}}}\bbar_{\D{\O{O}}}(A)
\end{equation}
corresponding through the equivalence \eqref{dO_infinity_bar} to the unit of the adjunction \eqref{bar_cobar_adjunction},
\[\bbar_{\D{\O{O}}}(A)\To\bbar_{\D{\O{O}}}\cobar_{\D{\O{O}}}\bbar_{\D{\O{O}}}(A),\]
see \cite[\S11.4.3]{loday_vallette_2012_algebraic_operads}.

We endow $\chain$ with the projective model structure, $\grchain=\chain^{\Z}$ with the product model structure, and $\grchain^{\SS}$ with the projective model structure. 

\begin{proposition}\label{infinity_E1}
    If $A$ is a $\Oinf{\D{\O{O}}}$-algebra $A$ which is cofibrant in $\grchain$ then the morphism in $\grchain$ underlying \eqref{infinity_unit} is a weak equivalence.
\end{proposition}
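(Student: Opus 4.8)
The plan is to compare $\iota\colon A\to\cobar_{\D{\O{O}}}\bbar_{\D{\O{O}}}(A)$, the morphism in $\grchain$ underlying \eqref{infinity_unit}, with the operadic Koszul complex of $\D{\O{O}}$ through a weight filtration, and then to conclude with the spectral sequence comparison of Lemma \ref{spectral}. Recall that the graded module underlying $\cobar_{\D{\O{O}}}\bbar_{\D{\O{O}}}(A)$ is $\D{\O{O}}\circ\K{\D{\O{O}}}\circ A$, that $\iota$ is the canonical inclusion of $A$ as the weight-zero part $\unit\circ\unit\circ A$, and that its total differential $D$ is the sum of the Koszul complex differential $d$ of $\D{\O{O}}\circ_{\zeta}\K{\D{\O{O}}}$ (acting by $\id{A}$ on the last factor), the internal differential $d_0$ of $A$, and the remaining terms of $\partial^A$ from \eqref{bar_algebras_perturbation}, namely the higher components $d_k$ with $k\geq 1$ and the structure operations $\mu_k$.

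First I would filter $\cobar_{\D{\O{O}}}\bbar_{\D{\O{O}}}(A)$ by the total weight of the factor $\D{\O{O}}\circ\K{\D{\O{O}}}$, placing $A$ in weight zero so that $\iota$ is filtration preserving. This filtration is bounded below (weight $\geq 0$) and exhaustive, hence complete by Remark \ref{spectral_sequence_remark}, so Lemma \ref{spectral} applies in each horizontal degree. A bookkeeping check shows that $d$ preserves the total weight (it carries one cogenerator from $\K{\D{\O{O}}}$ to $\D{\O{O}}$ through $\zeta$), that $d_0$ preserves it, and that every other term of $\partial^A$ strictly lowers it: the $d_k$ reduce the power of $\delta$ by $k\geq 1$, and each $\mu_k$ contracts inputs through a coproduct component of positive weight. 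Hence the associated graded differential is exactly $d+d_0$, so that
\[
E^0\big(\cobar_{\D{\O{O}}}\bbar_{\D{\O{O}}}(A)\big)=\big(\D{\O{O}}\circ_{\zeta}\K{\D{\O{O}}}\big)\circ(A,d_0),
\]
while $E^0(A)=(A,d_0)$ and $E^0(\iota)$ is the inclusion of the weight-zero summand.

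The heart of the argument is the computation of $E^1$, and I would carry it out in two steps. Taking $d_0$-homology first and using that $A$ is cofibrant in $\grchain$, hence arity-wise $\kk$-projective, the Künneth theorem gives $H_*(A^{\otimes r},d_0)=H_*(A,d_0)^{\otimes r}$ without $\mathrm{Tor}$ terms; since each $(\D{\O{O}}\circ_{\zeta}\K{\D{\O{O}}})(r)$ is projective over $\kk[\SS_r]$ by the $\SS$-projectivity hypotheses underlying Theorem \ref{main_main}, the functor $(\D{\O{O}}\circ_{\zeta}\K{\D{\O{O}}})(r)\otimes_{\SS_r}-$ is exact and commutes with this homology. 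This reduces the problem to the $d$-homology of $(\D{\O{O}}\circ_{\zeta}\K{\D{\O{O}}})\circ H_*(A,d_0)$. Now, because $\D{\O{O}}$ is Koszul (Theorem \ref{main_main}), the augmentation $\D{\O{O}}\circ_{\zeta}\K{\D{\O{O}}}\to\unit$ is a quasi-isomorphism of $\SS$-modules whose mapping cone, in each arity and each fixed horizontal degree, is a bounded acyclic complex of projective $\kk[\SS_r]$-modules (fixing the horizontal degree bounds the number of $\delta$'s and $\epsilon$'s, and $\K{\O{O}}(r)$ is concentrated in weight $r-1$), hence $\kk[\SS_r]$-linearly contractible. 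Tensoring over $\SS_r$ with $H_*(A,d_0)^{\otimes r}$ preserves contractibility, and summing over arities and horizontal degrees shows that $(\D{\O{O}}\circ_{\zeta}\K{\D{\O{O}}})\circ H_*(A,d_0)\to\unit\circ H_*(A,d_0)=H_*(A,d_0)$ is a quasi-isomorphism. Therefore $E^1(\cobar_{\D{\O{O}}}\bbar_{\D{\O{O}}}(A))\cong H_*(A,d_0)=E^1(A)$ and $E^1(\iota)$ is the identity, so Lemma \ref{spectral} yields that $\iota$ is a quasi-isomorphism in each horizontal degree, i.e.\ a weak equivalence in $\grchain$.

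The hard part, and the only place where cofibrancy of $A$ is genuinely used (beyond the standing $\SS$-projectivity of $\O{O}$ and $\K{\O{O}}$), will be this $E^1$ computation: over a general ground ring one must pass the composite product $-\circ A$ through homology, and the cofibrancy is exactly what supplies the $\mathrm{Tor}$-free Künneth isomorphism, while the $\SS_r$-projectivity guarantees the exactness of $\otimes_{\SS_r}$ against the Koszul complex and the contractibility of the cone even when $H_*(A,d_0)$ is not projective. I would therefore isolate a lemma asserting that $-\circ X$ sends quasi-isomorphisms between arity-wise $\kk[\SS_r]$-projective $\SS$-modules to quasi-isomorphisms once $X$ is cofibrant, which both streamlines the proof and makes transparent why the hypotheses of Theorem \ref{main_main} are precisely what is needed.
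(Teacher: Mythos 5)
Your overall strategy---filter by the weight of the $\D{\O{O}}\circ\K{\D{\O{O}}}$ factor, identify the associated graded with the operadic Koszul complex of $\D{\O{O}}$ composed with $(A,d_0)$, and conclude with Lemma \ref{spectral}---is the same as the paper's, which follows \cite[Theorem 11.4.4]{loday_vallette_2012_algebraic_operads}. The difference lies in how the comparison $A=\unit\circ A\to(\D{\O{O}}\circ_{\zeta}\K{\D{\O{O}}})\circ A$ is shown to be a quasi-isomorphism: the paper cites \cite[Proposition 11.5.3(b)]{fresse_2009_modules_operads_functors}, applied to the cofibrant object $A$ and to the inclusion $\unit\hookrightarrow\D{\O{O}}\circ_{\zeta}\K{\D{\O{O}}}$, after checking that the Koszul complex is cofibrant in $\grchain^{\SS}$ via a cell filtration and Lemma \ref{goods}; you instead argue by hand. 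Your by-hand argument, however, contains a false step. For a cofibrant complex $A$ over a general $\kk$ it is \emph{not} true that $H_*(A^{\otimes r},d_0)=H_*(A,d_0)^{\otimes r}$ without $\mathrm{Tor}$ terms: take $\kk=\Z$ and $A$ the cofibrant complex $\Z\stackrel{2}{\To}\Z$, so that $H_*(A)=\Z/2$ in degree $0$ while $H_1(A\otimes A)\cong\mathrm{Tor}_1^{\Z}(\Z/2,\Z/2)=\Z/2\neq 0$. Cofibrancy does not kill the K\"unneth $\mathrm{Tor}$ terms, so the reduction of the $E^1$ computation to $(\D{\O{O}}\circ_{\zeta}\K{\D{\O{O}}})\circ H_*(A,d_0)$ does not go through, and your closing diagnosis that ``cofibrancy supplies the $\mathrm{Tor}$-free K\"unneth isomorphism'' misidentifies the role of the hypothesis.

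The good news is that the second half of your $E^1$ computation repairs the first if you apply it directly to $A$ rather than to $H_*(A,d_0)$. Writing $K=\D{\O{O}}\circ_{\zeta}\K{\D{\O{O}}}$ and using that $-\circ A$ is additive and exact in the left variable, $E^0(\iota)$ is the inclusion of the direct summand $\unit\circ A$ of $K\circ A$ whose complement is $\bigoplus_{r\geq 0,\,n>0}K^{(n)}(r)\otimes_{\SS_r}A^{\otimes r}$. For $n>0$ each $K^{(n)}(r)$ is an acyclic complex of $\SS_r$-projective modules by Theorem \ref{main_main} and (the argument of) Lemma \ref{goods}, and it is bounded in the homological direction because the Koszul differential decreases the weight of the $\K{\D{\O{O}}}$-factor by exactly one and that weight lies in $[0,n]$; there is no need to fix the horizontal degree, nor to assert that $\K{\O{O}}(r)$ is concentrated in weight $r-1$, which in any case only holds when the generators $E$ are binary. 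Hence $K^{(n)}(r)$ is $\kk[\SS_r]$-linearly contractible, the contraction $h\otimes\id{}$ contracts $K^{(n)}(r)\otimes_{\SS_r}A^{\otimes r}$ for the total differential, and $E^0(\iota)$ is a quasi-isomorphism with no K\"unneth input at all. So repaired, your argument is correct and more self-contained than the paper's citation of Fresse's comparison theorem---indeed it does not visibly use the cofibrancy of $A$, which enters the paper's proof only through the hypotheses of that general result.
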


\begin{proof}
    The argument is like in \cite[Theorem 11.4.4]{loday_vallette_2012_algebraic_operads}, but here we should invoke \cite[Proposition 11.5.3 (b)]{fresse_2009_modules_operads_functors} instead of \cite[Proposition 6.2.3]{loday_vallette_2012_algebraic_operads}. We apply \cite[Proposition 11.5.3 (b)]{fresse_2009_modules_operads_functors} to $A$, which is cofibrant in $\grchain$ by hypothesis, and to the inclusion of the weight $0$ part $\unit$ into the Koszul complex $\D{\O{O}}\circ_{\tm}\K{\D{\O{O}}}$, which is a weak equivalence in $\grchain^{\SS}$ by Theorem \ref{main_main}. The object $\unit$ is obviously cofibrant in $\grchain^{\SS}$. We have to check that the Koszul complex too. For this, we need to use left modules over the operad $\D{\O{O}}$, which are just $\D{\O{O}}$-algebras in $\grchain^{\SS}$, see \cite[\S3.2.9]{fresse_2009_modules_operads_functors}. They carry a semi-model category structure transferred along
    \begin{equation*}
        \grchain^{\SS}\mathop{\rightleftarrows}^{\D{\O{O}}\circ-}_{\text{forget}}\algebras{\grchain^{\SS}}{\D{\O{O}}},
    \end{equation*}
    see \cite[Theorem 12.3.A]{fresse_2009_modules_operads_functors}. The model structure on $\grchain^{\SS}$ is combinatorial with sets of generating (trivial) cofibrations inherited from the usual ones in $\chain$.

    Let us check that the Koszul complex is cofibrant as a left $\D{\O{O}}$-module. 
    This will conclude the proof since any cofibrant left $\D{\O{O}}$-module is cofibrant in $\grchain^{\SS}$ by \cite[Proposition 12.3.2]{fresse_2009_modules_operads_functors} because $\D{\O{O}}$ is also cofibrant therein, see Lemma \ref{goods} below. If we filter the Koszul complex $\D{\O{O}}\circ_{\tm}\K{\D{\O{O}}}$ by the weight of $\K{\D{\O{O}}}$, the differential strictly decreases the filtration level. Hence $F_n(\D{\O{O}}\circ_\tm\K{\D{\O{O}}})$ is obtained from $F_{n-1}(\D{\O{O}}\circ_{\tm}\K{\D{\O{O}}})$ by freely attaching the $\SS$-module $(\K{\D{\O{O}}})^{(n)}$, compare the proof of \cite[Proposition 2.12]{fresse_2009_operadic_cobar_constructions}. The object $(\K{\D{\O{O}}})^{(n)}$ is cofibrant in $\grchain^{\SS}$ by Lemma \ref{goods}. This suffices to prove that the left $\D{\O{O}}$-module $\D{\O{O}}\circ_{\tm}\K{\D{\O{O}}}$ is cofibrant.
\end{proof}

\begin{lemma}\label{goods}
    The $\SS$-modules $\D{\O{O}}$ and $\K{\D{\O{O}}}$ are cofibrant in $\grchain^{\SS}$.
\end{lemma}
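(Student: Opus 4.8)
The plan is to reduce cofibrancy of both $\SS$-modules to an arity-wise projectivity statement, exactly as in the proof of Proposition \ref{semi-model}, exploiting that $\D{\O{O}}$ and $\K{\D{\O{O}}}$ have trivial differential.

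First I would recall the underlying $\SS$-modules. By the computation in the proof of Proposition \ref{derived_presentation}, $\D{\O{O}}(r)=\O{O}(r)\otimes\O{D}^{\otimes^r}$ as $\SS_r$-modules, with the diagonal action. On the cooperad side, $\K{\D{\O{O}}}=\D{\K{\O{O}}}$ by Theorem \ref{main_main}, whose underlying $\SS$-module is $\K{\O{D}}\circ\K{\O{O}}$ (Definition \ref{codistributive}); since $\K{\O{D}}=\kk[\delta]$ is concentrated in arity $1$, this yields $\K{\D{\O{O}}}(r)=\K{\O{D}}\otimes\K{\O{O}}(r)=\bigoplus_{i\geq 0}\delta^i\otimes\K{\O{O}}(r)$ as $\SS_r$-modules, where $\SS_r$ acts only on the $\K{\O{O}}(r)$ factor and there are no Koszul signs since $\delta$ has total degree $0$.

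The key algebraic input is the standard fact that, for a finite group $G$, if $P$ is a projective $\kk[G]$-module and $F$ is a $\kk[G]$-module which is projective over $\kk$, then $P\otimes_{\kk}F$ with the diagonal action is again a projective $\kk[G]$-module. This follows from the untwisting isomorphism $\kk[G]\otimes_{\kk}F\cong\kk[G]\otimes_{\kk}F_0$, where $F_0$ is the same $\kk$-module equipped with the trivial action, together with the fact that $-\otimes_{\kk}F$ preserves retracts; the isomorphism is defined by $g\otimes f\mapsto g\otimes(g^{-1}\cdot f)$ and so absorbs the Koszul signs carried by the action on $F$. By the running hypothesis of \S\ref{cobar_section}, $\O{O}(r)$ and $\K{\O{O}}(r)$ are $\SS_r$-projective in each bidegree, whereas $\O{D}^{\otimes^r}$ and $\K{\O{D}}=\kk[\delta]$ are $\kk$-free. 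Applying the lemma bidegree by bidegree, each bidegree component of $\D{\O{O}}(r)$ and of $\K{\D{\O{O}}}(r)$ is a projective $\kk[\SS_r]$-module.

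Finally I would upgrade this arity-wise projectivity to cofibrancy, following the pattern of Proposition \ref{semi-model}. The graded complex $\kk$ concentrated in a single bidegree $(p,q)$ with trivial differential is cofibrant in $\grchain$: in horizontal degree $p$ it is $\kk$ placed in vertical degree $q$, a single-degree complex of a projective module and hence cofibrant in the projective model structure on $\chain$, and it is $0$ in all other horizontal degrees. Consequently the free induced $\SS$-module $\kk[\SS_r]\otimes\kk$ placed in arity $r$ is cofibrant in $\grchain^{\SS}$, since the free functor is left Quillen. Because $\D{\O{O}}$ and $\K{\D{\O{O}}}$ have trivial differential, each splits as the direct sum of its bidegree components, and writing every such component as a retract of a free $\kk[\SS_r]$-module exhibits $\D{\O{O}}$ and $\K{\D{\O{O}}}$ as retracts of coproducts of free induced spheres, hence as cofibrant objects of $\grchain^{\SS}$. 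The only delicate point is the equivariant bookkeeping in this last step — checking that the bidegree-wise sections and retractions assemble into $\SS$-module maps — but triviality of the differential makes this entirely routine, since there is no differential-compatibility to verify.
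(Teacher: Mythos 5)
Your proposal is correct and follows essentially the same route as the paper: reduce cofibrancy to arity-wise $\SS_r$-projectivity (using triviality of the differential), identify $\D{\O{O}}(r)=\O{O}(r)\otimes\O{D}^{\otimes^r}$ and $\K{\D{\O{O}}}(r)=\kk[\delta]\otimes\K{\O{O}}(r)$, and invoke the untwisting isomorphism $g\otimes x\mapsto xg^{-1}\otimes g$ to handle the diagonal action. The only cosmetic difference is that the paper treats $\K{\D{\O{O}}}(r)$ directly as an infinite direct sum of copies of $\K{\O{O}}(r)$ rather than via the untwisting lemma, and states the projectivity-implies-cofibrancy step without your explicit retract-of-spheres bookkeeping.
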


\begin{proof}
    First of all, notice that any object in $\grchain^{\SS}$ with trivial differential which is arity-wise projective as a module over the corresponding symmetric group is cofibrant in $\grchain^{\SS}$. We will show that both $\D{\O{O}}$ and $\K{\D{\O{O}}}$ satisfy this property. The clearly have trivial differential. The second one is arity-wise $\K{\D{\O{O}}}(r)=(\K{\O{D}}\circ\K{\O{O}})(r)=\kk[\delta]\otimes\K{\O{O}}(r)$ by Theorem \ref{main_main}. As an $\SS_r$-module this is just an infinite direct sum of copies of $\K{\O{O}}(r)$, which is $\SS_r$-projective by the standing hypotheses of this section.

    The case of $\D{\O{O}}$ is slightly more difficult. We have seen in the proof of Proposition \ref{derived_presentation} that
    \[(\O{O}\circ\O{D})(r)=\O{O}(r)\otimes\O{D}^{\otimes^r}\]
    with $\SS_n$ acting on the tensor power by permutation of tensor factors and diagonally on the first tensor product. We want to prove that this $\SS_r$-module is projective using the fact that $\O{O}(r)$ is. The tensor power is $\kk$-projective. Hence, it suffices to show that, given a group $G$ and a $G$-module $M$, the tensor product $\kk[G]\otimes M$ with the diagonal action is isomorphic as a $G$-module to $M\otimes \kk[G]$ with $G$ acting just on the right tensor factor. It is easy to check that the map
    \begin{align*}
        k[G]\otimes M & \To M\otimes k[G],            \\
        g\otimes x    & \;\mapsto\; xg^{-1}\otimes g,
    \end{align*}
    provides the desired isomorphism.
\end{proof}




We are now ready to define the wise cobar construction in the context of derived homotopy algebras.

\begin{definition}\label{cobar_algebra}
    Let $A$ be a derived $\Oinf{\O{O}}$-algebra. The \emph{negative ideal} $I_{<0}\subset \cobar_{\D{\O{O}}}\bbar_{\D{\O{O}}}(A)$ is the $\D{\O{O}}$-algebra ideal generated by the negative horizontal degree part. The \emph{good cobar construction} is the quotient $\D{\O{O}}$-algebra
    \[\bar\cobar_{\D{\O{O}}}\bbar_{\D{\O{O}}}(A)=\cobar_{\D{\O{O}}}\bbar_{\D{\O{O}}}(A)/I_{<0}.\]
\end{definition}

By definition of $I_{<0}$, the $\D{\O{O}}$-algebra $\bar\cobar_{\D{\O{O}}}\bbar_{\D{\O{O}}}(A)$ is concentrated in horizontal degrees $\geq 0$, so it is actually an $\O{O}$-algebra in $\bichain$, see Proposition \ref{dO-algebra}.

\begin{proposition}
    Given a derived $\O{O}_\infty$-algebra $A$ and an $\O{O}$-algebra $B$ in $\bichain$, a $\D{\O{O}}$-algebra morphism $f\colon \cobar_{\D{\O{O}}}\bbar_{\D{\O{O}}}(A)\to B$ factors uniquely through the natural projection onto the good cobar construction
    \[\cobar_{\D{\O{O}}}\bbar_{\D{\O{O}}}(A)\twoheadrightarrow\bar\cobar_{\D{\O{O}}}\bbar_{\D{\O{O}}}(A)\to B.\]
\end{proposition}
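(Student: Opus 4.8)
The plan is to deduce the statement from the universal property of the quotient of a $\D{\O{O}}$-algebra by an ideal, the only real content being a short degree argument showing that any such $f$ must annihilate $I_{<0}$. Write $C=\cobar_{\D{\O{O}}}\bbar_{\D{\O{O}}}(A)$ and let $C_{<0}\subset C$ denote its negative horizontal degree part, so that $I_{<0}$ is by Definition \ref{cobar_algebra} the $\D{\O{O}}$-algebra ideal generated by $C_{<0}$. First I would observe that a $\D{\O{O}}$-algebra morphism is in particular a morphism in $\grchain$, hence a family of chain maps indexed by the horizontal degree, so it preserves horizontal degree. Since $B$ is an $\O{O}$-algebra in $\bichain$, it is a $\D{\O{O}}$-algebra concentrated in non-negative horizontal degrees (see Proposition \ref{dO-algebra}), and this forces $f(C_{<0})\subseteq B_{<0}=0$.

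Next I would propagate this vanishing from the generators to the whole ideal. Every element of $I_{<0}$ is a sum of terms $\mu(c_1,\dots,c_r)$ with $\mu\in\D{\O{O}}(r)$ and at least one input $c_j$ lying in $C_{<0}$; this description of the generated ideal is the standard one and only uses operadic associativity. Since $f$ is a $\D{\O{O}}$-algebra morphism we have $f(\mu(c_1,\dots,c_r))=\mu(f(c_1),\dots,f(c_r))$, and since $f(c_j)=0$ for that distinguished input, multilinearity of the operadic operation $\mu$ in each slot makes the whole term vanish. Therefore $f(I_{<0})=0$.

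Finally, the universal property of the quotient $C\twoheadrightarrow C/I_{<0}=\bar\cobar_{\D{\O{O}}}\bbar_{\D{\O{O}}}(A)$ in the category of $\D{\O{O}}$-algebras produces a unique $\D{\O{O}}$-algebra morphism $\bar\cobar_{\D{\O{O}}}\bbar_{\D{\O{O}}}(A)\to B$ through which $f$ factors, which is the desired factorization. There is no genuine obstacle here: the single step to state carefully is that a morphism killing a generating set of an ideal kills the ideal, and this is immediate from the linearity of operadic operations in each argument together with the fact that $f$ preserves horizontal degree while $B$ has none in negative horizontal degrees.
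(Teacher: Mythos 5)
Your argument is correct and is essentially the paper's own proof: the paper likewise observes that the generators of $I_{<0}$ lie in negative horizontal degree and hence must be killed by $f$, since $B$ vanishes there, and then invokes the quotient's universal property. Your elaboration of how the vanishing propagates from generators to the whole ideal is a harmless expansion of the same one-line idea.
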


\begin{proof}
    It suffices to notice that the generators of the $\D{\O{O}}$-ideal $I_{<0}$ must be in the kernel of $f$, since they have negative horizontal degree.
\end{proof}

\begin{remark}\label{reflection}
    By Proposition \ref{dO-algebra}, there is a full inclusion
    \[\algebras{\bichain}{\O{O}}\hookrightarrow\algebras{\grchain}{\D{\O{O}}}.\]
    Both categories are categories of algebras over a finitary monad in a locally presentable category, therefore they are locally presentable, see \cite[\S2.78]{adamek_rosicky_1994_locally_presentable_accessible}. Limits and filtered colimits in these categories are computed point-wise, as in the underlying categories, see \cite[\S4.3]{borceux_1994_handbook_categorical_algebra_A}. This implies that $\algebras{\bichain}{\O{O}}$ is closed under limits and filtered colimits in $\algebras{\grchain}{\D{\O{O}}}$, hence reflective by \cite[\S2.48]{adamek_rosicky_1994_locally_presentable_accessible}. We therefore have an adjoint pair
    \begin{equation}
        \algebras{\grchain}{\D{\O{O}}}\mathop{\rightleftarrows}\limits^R\algebras{\bichain}{\O{O}}
    \end{equation}
    where the left adjoint $R$ is the reflector and the right adjoint is the full inclusion. 

    We have the following commutative diagram of solid arrows given by the obvious forgetful functors
    \begin{center}
        \begin{tikzcd}
            \algebras{\grchain}{\D{\O{O}}} \ar[d, shift left = .5ex] \ar[r, shift left = .5ex, dashed, "R"] & \algebras{\bichain}{\O{O}} \ar[d, shift left = .5ex] \ar[l, shift left = .5ex] \\
            \grchain \ar[u, dashed, shift left = .5ex, "\D{\O{O}}\circ-"] \ar[r, dashed, shift left = .5ex, "t_{h\geq 0}\O{D}\circ-"] & \bichain \ar[l, shift left = .5ex] \ar[u, dashed, shift left = .5ex, "\O{O}\circ-"]
        \end{tikzcd}
    \end{center}
    Each of these forgetful functors has a left adjoint, in dashed arrows. The vertical left adjoints are the corresponding free algebra functors. The bottom left adjoint first sends a graded complex $X$ to the free unbounded bicomplex $\O{D}\circ X$, see Remark \ref{dual_numbers}, and then kills all the negative horizontal part by means of the naive horizontal truncation functor $t_{h\geq 0}\colon\bichainu\to\bichain$, which is the left adjoint of the full inclusion $\bichain\hookrightarrow\bichainu$. The diagram of left adjoints (dashed arrows) commutes up to natural isomorphism, by uniqueness of adjoints.
\end{remark}

\begin{corollary}\label{cofibrants}
    If $A$ is a derived $\O{O}_\infty$-algebra then $R\cobar_{\D{\O{O}}}\bbar_{\D{\O{O}}}(A)=\bar\cobar_{\D{\O{O}}}\bbar_{\D{\O{O}}}(A)$. 
\end{corollary}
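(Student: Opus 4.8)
The plan is to recognize the preceding proposition as the assertion that the good cobar construction, together with its natural projection, solves the universal problem that defines the reflector $R$ on the object $\cobar_{\D{\O{O}}}\bbar_{\D{\O{O}}}(A)$. First I would recall from Remark \ref{reflection} that $R$ is left adjoint to the full inclusion $\algebras{\bichain}{\O{O}}\hookrightarrow\algebras{\grchain}{\D{\O{O}}}$. Concretely, for any $\D{\O{O}}$-algebra $C$ in $\grchain$ the reflection $R(C)$ is equipped with a unit morphism $\eta_C\colon C\to R(C)$ having the universal property that every $\D{\O{O}}$-algebra morphism $f\colon C\to B$, with $B$ an $\O{O}$-algebra in $\bichain$, factors uniquely as $f=\bar{f}\circ\eta_C$ for some morphism $\bar{f}\colon R(C)\to B$ in $\algebras{\bichain}{\O{O}}$.

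Next I would specialize to $C=\cobar_{\D{\O{O}}}\bbar_{\D{\O{O}}}(A)$. As observed right after Definition \ref{cobar_algebra}, the good cobar construction $\bar\cobar_{\D{\O{O}}}\bbar_{\D{\O{O}}}(A)$ is an $\O{O}$-algebra in $\bichain$, and the canonical projection $\pi\colon\cobar_{\D{\O{O}}}\bbar_{\D{\O{O}}}(A)\twoheadrightarrow\bar\cobar_{\D{\O{O}}}\bbar_{\D{\O{O}}}(A)$ is a $\D{\O{O}}$-algebra morphism. The preceding proposition says precisely that $\pi$ enjoys the very same factorization property as $\eta_C$: any $\D{\O{O}}$-algebra morphism from $\cobar_{\D{\O{O}}}\bbar_{\D{\O{O}}}(A)$ to an $\O{O}$-algebra $B$ in $\bichain$ factors uniquely through $\pi$.

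Finally I would invoke the uniqueness of reflections. Two objects of the reflective subcategory $\algebras{\bichain}{\O{O}}$ receiving a morphism from $\cobar_{\D{\O{O}}}\bbar_{\D{\O{O}}}(A)$ and satisfying the same universal factorization property are canonically isomorphic, compatibly with the structure morphisms $\eta_C$ and $\pi$. This identifies $R\cobar_{\D{\O{O}}}\bbar_{\D{\O{O}}}(A)$ with $\bar\cobar_{\D{\O{O}}}\bbar_{\D{\O{O}}}(A)$, which is exactly the asserted equality, the ``$=$'' being read as this canonical identification.

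Since essentially all of the content is furnished by the preceding proposition together with the defining adjunction of $R$ in Remark \ref{reflection}, I do not expect a genuine obstacle here. The only point requiring care is the bookkeeping that the ``factors uniquely'' clause of that proposition is literally the universal property of the unit of the reflection adjunction, and that it is the \emph{uniqueness} part of that clause (not merely a natural bijection on hom-sets) which upgrades the identification to an isomorphism of $\O{O}$-algebras in $\bichain$.
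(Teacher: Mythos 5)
Your argument is correct and is precisely the one the paper intends: the corollary is stated without proof because it follows immediately from the preceding proposition, which exhibits the projection onto the good cobar construction as satisfying the universal property of the unit of the reflection adjunction from Remark \ref{reflection}. Your spelling out of the uniqueness-of-reflections step is exactly the implicit content.
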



\begin{corollary}\label{wise_bar_cobar_adjunction}
    We have an adjoint pair
    \[\coalgebrasqf{\grchain}{\K{\D{\O{O}}}}\mathop{\rightleftarrows}\limits^{\bar\cobar_{\D{\O{O}}}}_{\bbar_{\D{\O{O}}}}\algebras{\bichain}{\O{O}}.\]
\end{corollary}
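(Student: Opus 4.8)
The plan is to realize the desired adjunction as the composite of two adjunctions already established above, so that the proof becomes a formal exercise in composing adjoint pairs. First I would recall the bar-cobar adjunction \eqref{bar_cobar_adjunction}, whose left adjoint is $\cobar_{\D{\O{O}}}$ and whose right adjoint is $\bbar_{\D{\O{O}}}$, relating quasi-cofree $\K{\D{\O{O}}}$-coalgebras in $\grchain$ to $\D{\O{O}}$-algebras in $\grchain$. Second I would recall the reflection adjunction of Remark \ref{reflection}, whose left adjoint is the reflector $R$ and whose right adjoint is the full inclusion $\algebras{\bichain}{\O{O}}\hookrightarrow\algebras{\grchain}{\D{\O{O}}}$.

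Composing these two adjunctions, with $R$ applied after $\cobar_{\D{\O{O}}}$ on the left and the full inclusion applied before $\bbar_{\D{\O{O}}}$ on the right, yields an adjoint pair between $\coalgebrasqf{\grchain}{\K{\D{\O{O}}}}$ and $\algebras{\bichain}{\O{O}}$. Its left adjoint is $R\circ\cobar_{\D{\O{O}}}$ and its right adjoint is $\bbar_{\D{\O{O}}}$ precomposed with the full inclusion. The next step is to identify these composites with the functors named in the statement. On the right, precomposing $\bbar_{\D{\O{O}}}$ with the full inclusion simply means feeding an $\O{O}$-algebra in $\bichain$, viewed as a $\D{\O{O}}$-algebra in $\grchain$, into $\bbar_{\D{\O{O}}}$; this is exactly the $\bbar_{\D{\O{O}}}$ appearing in the statement. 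On the left, every object of $\coalgebrasqf{\grchain}{\K{\D{\O{O}}}}$ is of the form $\bbar_{\D{\O{O}}}(A)$ for some $\Oinf{\D{\O{O}}}$-algebra $A$ by the equivalence \eqref{dO_infinity_bar}, so Corollary \ref{cofibrants} gives $R\circ\cobar_{\D{\O{O}}}=\bar\cobar_{\D{\O{O}}}$ on all objects.

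The only point requiring a little care is that this last identification be functorial rather than merely object-wise. This is immediate, since both $R\circ\cobar_{\D{\O{O}}}$ and $\bar\cobar_{\D{\O{O}}}$ arise from $\cobar_{\D{\O{O}}}$ by passing to the quotient by the $\D{\O{O}}$-algebra ideal generated by the negative horizontal degree part: the reflector $R$ is precisely this quotient, being the left adjoint to the inclusion of algebras concentrated in non-negative horizontal degrees, while $\bar\cobar_{\D{\O{O}}}$ is this same quotient by Definition \ref{cobar_algebra}. As the quotient is a natural construction, the two functors agree on morphisms as well. I do not expect any genuine obstacle here; the content is entirely bookkeeping with the two adjunctions, with Corollary \ref{cofibrants} supplying the crucial identification of the composite left adjoint with the good cobar construction.
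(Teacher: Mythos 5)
Your proposal is correct and is exactly the argument the paper intends: the corollary is obtained by composing the bar--cobar adjunction \eqref{bar_cobar_adjunction} with the reflection adjunction of Remark \ref{reflection}, and then identifying the composite left adjoint $R\circ\cobar_{\D{\O{O}}}$ with $\bar\cobar_{\D{\O{O}}}$ as in Corollary \ref{cofibrants} (both being the quotient by the negative ideal). Your remark that this identification is functorial because $R$ is precisely the quotient by the $\D{\O{O}}$-ideal generated by the negative horizontal part is the right way to handle the only point of care.
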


We now check that the quotient performed to push the cobar construction into the category of bicomplexes does not change the homotopy type if we start with a $\kk$-projective derived homotopy algebra, e.g.~a minimal model.

\begin{proposition}\label{quotient}
    If $A$ is a $\kk$-projective derived $\Oinf{\O{O}}$-algebra, then the morphism in $\grchain$ underlying natural projection $\cobar_{\D{\O{O}}}\bbar_{\D{\O{O}}}(A)\twoheadrightarrow\bar\cobar_{\D{\O{O}}}\bbar_{\D{\O{O}}}(A)$ is a weak equivalence.
\end{proposition}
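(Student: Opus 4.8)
The plan is to realize both constructions as free $\O{O}$-algebras in bicomplexes on explicit generating graded complexes and to compare them through a single spectral sequence, the heart of the matter being that the horizontal truncation carried out by the good cobar construction is invisible to homology because the Koszul complex of the dual numbers is acyclic in every negative horizontal degree. First I would record the underlying graded modules. Using $\D{\O{O}}=\O{O}\circ\O{D}$ and $\K{\D{\O{O}}}=\K{\O{D}}\circ\K{\O{O}}$ as $\SS$-modules (Theorem \ref{main_main}), one has $\cobar_{\D{\O{O}}}\bbar_{\D{\O{O}}}(A)=\O{O}\circ W$ with generating graded complex
\[
    W=\O{D}\circ\K{\O{D}}\circ\K{\O{O}}\circ A\cong(\O{D}\otimes\K{\O{D}})\otimes(\K{\O{O}}\circ A),
\]
the identification holding because $\O{D}$ and $\K{\O{D}}$ are concentrated in arity $1$. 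By Corollary \ref{cofibrants} and the description of the reflector $R$ in Remark \ref{reflection}, $\bar\cobar_{\D{\O{O}}}\bbar_{\D{\O{O}}}(A)=\O{O}\circ t_{h\geq 0}(W)$, and the projection in the statement is induced by the horizontal truncation $W\twoheadrightarrow t_{h\geq 0}(W)$, whose kernel $W_{<0}$ is the negative horizontal part. A preliminary computation of bidegrees shows that every summand of the cobar differential, namely the Koszul differential of $\D{\O{O}}\circ_{\tm}\K{\D{\O{O}}}$ together with the bar perturbation \eqref{bar_algebras_perturbation}, preserves the total horizontal degree, so all complexes in sight split over horizontal degrees and $t_{h\geq 0}(W)$ is a quotient complex.

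Next I would filter both $\cobar_{\D{\O{O}}}\bbar_{\D{\O{O}}}(A)$ and $\bar\cobar_{\D{\O{O}}}\bbar_{\D{\O{O}}}(A)$ by the integer $\nu$ equal to the total power of $\epsilon$ and $\delta$ plus twice the weight in $\K{\O{O}}$. This filtration is bounded below, exhaustive, and preserved by the projection. Inspecting the differential against $\nu$, the only parts that leave $\nu$ unchanged are the dual-numbers piece $\delta\mapsto\epsilon$ of the Koszul differential and the internal differential $d_0$ of $A$: the piece of the Koszul differential coming from $\kappa\colon\K{\O{O}}\to\O{O}$ drops $\nu$ by $2$, whereas both sums in \eqref{bar_algebras_perturbation} strictly drop $\nu$, since the terms $d_k$ with $k>0$ spend $k$ copies of $\delta$ and the $\mu^{(2)}_k$ terms lower the $\K{\O{O}}$-weight. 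Hence the associated graded differential decouples as the Koszul differential of $\O{D}\circ_{\ctm}\K{\O{D}}$ on the first tensor factor plus $d_0$ on $A$, with $\O{O}$ and $\K{\O{O}}$ as silent spectators, giving
\[
    E^0\big(\cobar_{\D{\O{O}}}\bbar_{\D{\O{O}}}(A)\big)=\O{O}\circ\big((\O{D}\circ_{\ctm}\K{\O{D}})\otimes(\K{\O{O}}\circ A,d_0)\big).
\]

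I would then compute $E^1$. Since $A$ is $\kk$-projective, so are $\K{\O{O}}\circ A$ and $W$, and the Künneth theorem gives $H_*(W)=H_*(\O{D}\circ_{\ctm}\K{\O{D}})\otimes H_*(\K{\O{O}}\circ A,d_0)$. By Proposition \ref{dual_numbers_koszul} the first factor is $\kk$ concentrated in horizontal degree $0$, while $\K{\O{O}}\circ A$ lives in horizontal degrees $\geq 0$; therefore $H_*(W)$ is concentrated in horizontal degrees $\geq 0$, so $W_{<0}$ is acyclic and $W\to t_{h\geq 0}(W)$ is a quasi-isomorphism of $\kk$-projective complexes. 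Because each $\O{O}(r)$ is $\SS_r$-projective, hence $\SS_r$-flat, the free $\O{O}$-algebra functor $\O{O}\circ(-)=\bigoplus_r\O{O}(r)\otimes_{\SS_r}(-)^{\otimes r}$ preserves this quasi-isomorphism summand by summand, by a second application of Künneth. Thus $E^0$ of the projection is a quasi-isomorphism, i.e.\ the projection induces an isomorphism on $E^1$, and Lemma \ref{spectral} shows that the projection is a quasi-isomorphism, as desired.

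The main obstacle I anticipate is the filtration bookkeeping of the second paragraph: one must verify precisely which summands of the intertwined Koszul and bar differentials survive to $E^0$, in particular that the twisted-complex operations $d_k$ with $k>0$ drop the filtration so that the surviving dual-numbers differential genuinely decouples from $A$ as a tensor product. Everything else is driven by the two applications of Künneth, which is exactly where the $\kk$-projectivity of $A$ enters, and by the negative-degree acyclicity of the dual-numbers Koszul complex, which is what makes discarding $W_{<0}$ harmless.
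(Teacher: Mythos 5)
Your argument is essentially the paper's: the same filtration idea (the paper filters by the weight of $\K{\O{O}}$ together with the horizontal degree of $A$, which has the same effect as your $\nu$), the same identification of $E^0$ as $\O{O}$ applied to the Koszul complex of $\O{D}$ tensored with $\K{\O{O}}\circ A$, the same use of the negative-horizontal-degree acyclicity of $\O{D}\circ_{\ctm}\K{\O{D}}$ and of the $\SS_r$-projectivity of $\O{O}(r)$, and the same conclusion via Lemma \ref{spectral}. The only divergence is in the last step: the paper describes the kernel $I_{<0}$ on $E^0$ explicitly as $\bigoplus_r\O{O}(r)\otimes_{\SS_r}s(i^{\odot r})$ via pushout-products and kills its homology by Mayer--Vietoris and induction, whereas you work with the quotient $\O{O}\circ t_{h\geq 0}(W)$ and argue that $\O{O}\circ(-)$ preserves the quasi-isomorphism $W\to t_{h\geq 0}(W)$; these are two phrasings of the same computation. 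One point to tighten: for unbounded complexes over a general $\kk$, a quasi-isomorphism of $\kk$-projective complexes need not survive $(-)^{\otimes r}$ by a bare appeal to K\"unneth, so at this step you should use that $W_{<0}$ is in fact \emph{contractible} (the differential of $\O{D}\circ_{\ctm}\K{\O{D}}$ is an isomorphism in each negative horizontal degree), whence $W\to t_{h\geq 0}(W)$ is a degreewise split surjection with contractible kernel, $W^{\otimes r}\to(t_{h\geq 0}W)^{\otimes r}$ is a chain homotopy equivalence, and the exact functor $\O{O}(r)\otimes_{\SS_r}(-)$ preserves the resulting quasi-isomorphism; the same contractibility is what powers the paper's Mayer--Vietoris step.
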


\begin{proof}
    It suffices to check that the underlying graded complex of $I_{<0}$ has trivial homology. First, we filter $\cobar_{\D{\O{O}}}\bbar_{\D{\O{O}}}(A)$. Its underlying bigraded module is $\O{O}\circ\O{D}\circ\K{\O{D}}\circ\K{\O{O}}\circ A$, and we filter it according to the weight of $\K{\O{O}}$ and the horizontal degree of $A$. This filtration is bounded below and exhaustive. We have to check that this is a filtration in $\grchain$, i.e.~compatibility with the differential.

    The differential of $\cobar_{\D{\O{O}}}\bbar_{\D{\O{O}}}(A)$ consists of three pieces. This first one is induced by the  differential $d_0$ of the underlying graded complex of $A$, so it preserves the filtration. The second one, given by the perturbation $\partial^A$ of $\bbar_{\D{\O{O}}}(A)$, is compatible with the filtration because, in \eqref{bar_algebras_perturbation}, $d_k$ reduces the horizontal degree since $k>0$, and the weight of $\mu^{(1)}$ is smaller than the weight of $\mu$ because $\mu^{(2)}$ has positive weight. Actually, this piece strictly decreases the filtration level. The third part is induced by the differential of the Koszul complex of $\D{\O{O}}$. In this piece, the filtration has already been considered in the proof of Theorem \ref{main_main}. Using also the computation therein we see that
    \[E^0(\cobar_{\D{\O{O}}}\bbar_{\D{\O{O}}}(A))=\O{O}\circ(\O{D}\circ_{\kappa}\K{\O{D}})\circ\K{\O{O}}\circ A,\]
    with the differential coming from the Koszul complex of $\O{D}$ and from $d_0$ of $A$.

    We now describe the bigraded module underlying $I_{<0}$. We have that
    \[\O{O}\circ\O{D}\circ\K{\O{D}}\circ\K{\O{O}}\circ A
        =
        \bigoplus_{r\geq 0}
        \O{O}(r)\otimes_{\SS_r}(\O{D}\circ\K{\O{D}}\circ\K{\O{O}}\circ A)^{\otimes^r}\]
    with the permutation action on the tensor power.
    Denote by $C_{<0}$ the negative horizontal degree part of $\O{D}\circ\K{\O{D}}\circ\K{\O{O}}\circ A=\O{D}\otimes\K{\O{D}}\otimes(\K{\O{O}}\circ A)$ and $i\colon C_{<0}\hookrightarrow \O{D}\otimes\K{\O{D}}\otimes(\K{\O{O}}\circ A)$ its (split) inclusion in the category of graded complexes. Recall that the \emph{push-out product}  of two morphisms $f\colon X\to Y$ and $g\colon U\to V$ in a monoidal category with push-outs is the morphism $f\odot g$ in the following commutative diagram
    \begin{center}
        \begin{tikzcd}
            X\otimes U \ar[r, "\id{X}\otimes g"] \ar[d, "f\otimes \id{U}"'] \ar[rd, "\text{\scriptsize push}" description, phantom] & X\otimes V \ar[d] \ar[rdd, bend left, "f\otimes \id{V}"]& \\
            Y\otimes U \ar[r] \ar[rrd, bend right, "\id{Y}\otimes g"] & s(f\odot g) \ar[rd, "f\odot g"] & \\
            && Y\otimes V
        \end{tikzcd}
    \end{center}
    Its source will be denoted by $s(f\odot g)$, as indicated in the diagram.
    Then, the bigraded module underlying $I_{<0}$ is
    \begin{equation}\label{ideal}
        \bigoplus_{r\geq 0}\O{O}(r)\otimes_{\SS_r}s(i^{\odot^r})
    \end{equation}
    and the inclusion morphism $I_{<0}\hookrightarrow \cobar_{\D{\O{O}}}\bbar_{\D{\O{O}}}(A)$ is $\bigoplus_{r\geq 0}
        \O{O}(r)\otimes_{\SS_r}i^{\odot^r}$.

    We endow the ideal $I_{<0}\subset \cobar_{\D{\O{O}}}\bbar_{\D{\O{O}}}(A)$ with the induced filtration. We consider $(\O{D}\otimes_{\kappa}\K{\O{D}})\otimes(\K{\O{O}}\circ A)$ as a graded complex and $C_{<0}$ as a graded subcomplex. In this way, \eqref{ideal} describes the graded complex $E^0(I_{<0})$ associated to the filtration of $I_{<0}$. Hence, by Lemma \ref{spectral}, in order to conclude this proof, it suffices to prove that \eqref{ideal} has trivial homology.

    The bigraded module $\K{\O{O}}\circ A$ is $\kk$-projective since $A$ is and $\K{\O{O}}$ is arity-wise projective over the corresponding symmetric group. The bigraded module $\O{D}\circ\K{\O{D}}$ is also $\kk$-projective. The homology of the Koszul complex $\O{D}\otimes_{\kappa}\K{\O{D}}$ is $\unit$. Hence, the homology of $(\O{D}\otimes_{\kappa}\K{\O{D}})\otimes\K{\O{O}}\circ A$ coincides with the homology of $\K{\O{O}}\circ A$, which is concentrated in non-negative horizontal degrees, because $A$ is. Therefore, $C_{<0}$ has trivial homology. Now, by Mayer--Vietoris and induction we see that $s(i^{\odot^r})$ has trivial homology. Since $\O{O}$ is also arity-wise projective over the corresponding symmetric group, we conclude that \eqref{ideal} has trivial homology too.

\end{proof}


\begin{definition}\label{totalization}
    The \emph{total complex} $\tot(X)$ of a bicomplex $X$ is
    \[\tot(X)_n=\bigoplus_{p+q=n}X_{p,q}\]
    with differential $d_h+d_v$. This construction defines a \emph{totalization} functor
    \[\tot\colon\bichain\To\chain.\]
\end{definition}

\begin{remark}\label{totalization_remark}
    The totalization functor strictly preserves tensor products and related constraints, so it lifts to algebras,
    \begin{equation*}
        \tot\colon\algebras{\bichain}{\O{O}}\To\algebras{\chain}{\O{O}}.
    \end{equation*}
    This functor is left inverse to the full inclusion $\algebras{\chain}{\O{O}}\hookrightarrow\algebras{\chain}{\O{O}}$.
\end{remark}

We can finally state our strictification theorem, which says that any differential graded $\O{O}$-algebra can be recovered from a minimal model up to quasi-isomorphism.

\begin{theorem}\label{recover_dg}
    Let $A$ be a differential graded $\O{O}$-algebra and $f\colon \minimal{A} \leadsto A$ a minimal model.
    Consider the image $\bbar_{\D{\O{O}}}f\colon \bbar_{\D{\O{O}}}\minimal{A} \to \bbar_{\D{\O{O}}}A$ of $f$ along the equivalence \eqref{dO_infinity_bar}, its adjoint morphism $\bar\cobar_{\D{\O{O}}}\bbar_{\D{\O{O}}}\minimal{A} \to A$ in $\algebras{\bichain}{\O{O}}$ along the adjunction in Corollary \ref{wise_bar_cobar_adjunction}, and its totalization $\tot\bar\cobar_{\D{\O{O}}}\bbar_{\D{\O{O}}}\minimal{A} \to A$, which is a morphism in $\algebras{\chain}{\O{O}}$. This last morphism is a quasi-isomorphism.
\end{theorem}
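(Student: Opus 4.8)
The plan is to factor the minimal model $f$ through the good cobar construction and then compare spectral sequences. Write $C=\bar\cobar_{\D{\O{O}}}\bbar_{\D{\O{O}}}\minimal{A}\in\algebras{\bichain}{\O{O}}$ and let $g\colon C\to A$ be the adjoint of $\bbar_{\D{\O{O}}}f$ along the adjunction of Corollary \ref{wise_bar_cobar_adjunction}, so the morphism to be shown to be a quasi-isomorphism is $\tot g$. Both total complexes $\tot C$ and $\tot A=A$ carry the increasing filtration by horizontal degree, which is bounded below and exhaustive because $C$ lives in $\bichain$ and $A$ is concentrated in horizontal degree $0$; thus Lemma \ref{spectral} applies, and it suffices to prove that $\tot g$ induces an isomorphism on the $E^2$-page.

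First I would establish the factorization $f=g\circ(\pi\circ u)$ as $\infty$-morphisms $\minimal{A}\leadsto A$, where $u\colon\minimal{A}\leadsto\cobar_{\D{\O{O}}}\bbar_{\D{\O{O}}}\minimal{A}$ is the unit \eqref{infinity_unit}, whose image under the equivalence \eqref{dO_infinity_bar} is the unit of the adjunction \eqref{bar_cobar_adjunction}, and $\pi$ is the canonical projection onto $C$. By Corollary \ref{cofibrants} we have $\bar\cobar_{\D{\O{O}}}=R\circ\cobar_{\D{\O{O}}}$ with $R$ the reflector of Remark \ref{reflection}, so the adjunction of Corollary \ref{wise_bar_cobar_adjunction} is the composite of \eqref{bar_cobar_adjunction} with $R\dashv\iota$, where $\iota$ is the full inclusion. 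Its unit at $\bbar_{\D{\O{O}}}\minimal{A}$ is then $\bbar_{\D{\O{O}}}(\pi)\circ\bbar_{\D{\O{O}}}(u)=\bbar_{\D{\O{O}}}(\pi u)$, the reflection unit being precisely the quotient $\pi$. Since $g$ is adjoint to $\bbar_{\D{\O{O}}}f$, the triangle identity reads $\bbar_{\D{\O{O}}}f=\bbar_{\D{\O{O}}}(g)\circ\bbar_{\D{\O{O}}}(\pi u)=\bbar_{\D{\O{O}}}(g\circ\pi u)$, and faithfulness of \eqref{dO_infinity_bar} gives $f=g\circ\pi u$.

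Next I would use cofibrancy. By Definition \ref{minimal_models} the source $\minimal{A}$ is $\kk$-projective and minimal, so $d_0=0$ and each vertical complex is a $\kk$-projective complex with zero differential, hence $\minimal{A}$ is cofibrant in $\grchain$. Proposition \ref{infinity_E1} then makes the $\grchain$-morphism underlying $u$ a weak equivalence, and Proposition \ref{quotient} makes the $\grchain$-morphism underlying $\pi$ a weak equivalence; composing, the morphism in $\grchain$ underlying $\pi u$ is a vertical quasi-isomorphism, so $\pi u$ induces an isomorphism on $E^1$, hence on $E^2$. Taking underlying filtration-preserving chain maps of $f=g\circ\pi u$ between the split filtered $\Oinf{\O{O}}$-algebras (Corollary \ref{dhm_agrees}) gives $f(1)=(\tot g)\circ(\pi u)(1)$, since the underlying morphism of the strict map $g$ is $\tot g$. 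On $E^2$ the map induced by $f$ is an isomorphism because $f$ is a minimal model, hence an $E^2$-equivalence (Definition \ref{E2-equivalence}), while the map induced by $\pi u$ is an isomorphism by the above; two-out-of-three forces $\tot g$ to induce an isomorphism on $E^2$, and Lemma \ref{spectral} yields that $\tot g$ is a quasi-isomorphism.

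The hard part will be the factorization $f=g\circ\pi u$: it requires carefully matching the unit of the bar-cobar adjunction \eqref{bar_cobar_adjunction}, the reflection $R\dashv\iota$ producing the good cobar construction, and the equivalence \eqref{dO_infinity_bar}, so that the strict adjoint $g$ recombines with the non-strict $\infty$-morphism $\pi u$ to recover $f$. A secondary care point is checking that the linear part $(\pi u)(1)_0$ controlled by Propositions \ref{infinity_E1} and \ref{quotient} is exactly what computes the $E^1$-map of the filtered total complexes.
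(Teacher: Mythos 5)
Your proposal is correct and follows essentially the same route as the paper: the same factorization of $f$ through $\cobar_{\D{\O{O}}}\bbar_{\D{\O{O}}}\minimal{A}$ and the good cobar construction, the same appeal to Propositions \ref{infinity_E1} and \ref{quotient} (using that the graded complex underlying $\minimal{A}$ is cofibrant because it is $\kk$-projective with trivial differential), the same 2-out-of-3 argument for $E^2$-equivalences, and the same final application of Lemma \ref{spectral}. The only difference is that you spell out the adjunction bookkeeping behind the factorization $f=g\circ\pi u$, which the paper takes for granted.
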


\begin{proof}
    The map $f$ decomposes as
    \begin{center}
        \begin{tikzcd}
            \minimal{A} \ar[r, rightsquigarrow] &
            \cobar_{\D{\O{O}}}\bbar_{\D{\O{O}}}\minimal{A} \ar[r] &
            \bar \cobar_{\D{\O{O}}}\bbar_{\D{\O{O}}}\minimal{A} \ar[r] &
            A.
        \end{tikzcd}
    \end{center}
    The first $\infty$-morphism is that in \eqref{infinity_unit}. It is an $E_2$-equivalence, since it actually induces an isomorphism on the $E^1$ page by Proposition \ref{infinity_E1}. This proposition applies because the graded complex underlying $A$ is $\kk$-projective with trivial differential, so it is cofibrant. The second morphism is the one in Proposition \ref{quotient}, which is an $E^2$-equivalence for the same reason. Since $f$ is an $E^2$-equivalence, the 2-out-of-3 property implies that the third arrow, which is the one in the statement, is an $E^2$-equivalence. Therefore its totalization is a quasi-isomorphism by Lemma \ref{spectral}.
\end{proof}

In Remark \ref{cofibrant_resolution} below we see that $\tot\bar\cobar_{\D{\O{O}}}\bbar_{\D{\O{O}}}\minimal{A} \to A$ in the previous theorem is a cofibrant replacement in $\algebras{\bichaintotal}{\O{O}}$ whenever this category carries the projective model structure.

\begin{remark}\label{formal_complex}
    A minimal derived homotopy $\O{O}$-algebra structure is \textit{trivial} if it reduces to its underlying $\D{\O{O}}$-algebra structure, see Remark \ref{minimal_remark_1}. If the differential graded $\O{O}$-algebra $A$ has a trivial minimal model $\minimal{A}$, then taking vertical homology on $f\colon\minimal{A}\leadsto A$ we see that $\minimal{A}$ is also a minimal model of $H_*(A)$, so $A$ is formal by Theorem \ref{recover_dg}. If $\kk$ has projective dimension $\geq 2$ there are complexes which are not formal. Obviously, a differential graded $\O{O}$-algebra with non-formal underlying complex cannot be formal. As a consequence, most differential graded $\O{O}$-algebras have necessarily non-trivial minimal models. Sagave's \cite[Example 5.1]{sagave_2010_dgalgebras_derived_algebras} fits into this case, since $\kk=\Z/(p^2)$ and the underlying complex is
    \[\cdots \to 0\to \Z/(p^2)\stackrel{p}{\To}\Z/(p^2)\to0\to\cdots.\]

    Since $\Z$ and $\Q[t]$ have projective dimension $1$, Examples \ref{example_dugger_shipley}, \ref{commutative_minimal}, and \ref{Lie_minimal} are non-trivial for deeper reasons. In the case of Example \ref{example_dugger_shipley}, we show below in Example \ref{dugger_shipley_non_formal} that it is non-formal, hence there cannot be a trivial minimal model. The same can be done for Examples \ref{commutative_minimal} and \ref{Lie_minimal}. We will give an easy proof of this claim in a forthcoming paper on universal Massey products for operadic algebras over commutative rings, which extends part of \cite{dimitrova_2012_obstruction_theory_operadic}.
\end{remark}


\begin{proposition}\label{cofibrantCE}
    If $A$ is a $\kk$-projective minimal $\Oinf{\D{\O{O}}}$-algebra then $\bar\cobar_{\D{\O{O}}}\bbar_{\D{\O{O}}}(A)$ is cofibrant in $\algebras{\bichainCE}{\O{O}}$. 
\end{proposition}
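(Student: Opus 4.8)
The plan is to exhibit $\bar\cobar_{\D{\O{O}}}\bbar_{\D{\O{O}}}(A)$ as a quasi-free $\O{O}$-algebra in $\bichain$ whose generating bicomplex is cofibrant, and then to build it cell by cell out of free algebras along a weight filtration. First I would identify its underlying bigraded $\O{O}$-algebra. By Corollary \ref{cofibrants} we have $\bar\cobar_{\D{\O{O}}}\bbar_{\D{\O{O}}}(A)=R\cobar_{\D{\O{O}}}\bbar_{\D{\O{O}}}(A)$, and $\cobar_{\D{\O{O}}}\bbar_{\D{\O{O}}}(A)$ is free as a bigraded $\D{\O{O}}$-algebra on the graded complex $\bbar_{\D{\O{O}}}(A)=\K{\D{\O{O}}}\circ A=\kk[\delta]\otimes(\K{\O{O}}\circ A)$, using Theorem \ref{main_main} and Proposition \ref{dual_numbers_koszul}. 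Applying the reflector $R$ and the commuting square of left adjoints in Remark \ref{reflection}, its underlying bigraded $\O{O}$-algebra in $\bichain$ is the free one on the bicomplex
\[
V=t_{h\geq 0}\bigl(\O{D}\circ\bbar_{\D{\O{O}}}(A)\bigr),
\]
and its differential is the free differential perturbed by the images of the Koszul differential of $\D{\O{O}}\circ_{\tm}\K{\D{\O{O}}}$ and of the bar perturbation \eqref{bar_algebras_perturbation}.

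Second I would check that $V$ is cofibrant in $\bichainCE$. Since $A$ is minimal, its graded-complex differential $d_0$ vanishes, and since $\K{\D{\O{O}}}$ and $\O{D}$ carry trivial internal differentials, the only nonzero differential of $V$ is the horizontal one coming from the $\epsilon$-action; in particular $V$ has trivial vertical differential. Moreover $V$ is $\kk$-projective: $\K{\O{O}}$ is arity-wise projective over the symmetric groups, hence $\kk$-projective, so $\K{\O{O}}\circ A$ is $\kk$-projective because $A$ is, and $V$ is obtained from it by tensoring with the $\kk$-free objects $\kk[\delta]$ and $\O{D}$ and then truncating, which preserves $\kk$-projectivity. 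A $\kk$-projective bicomplex concentrated in non-negative horizontal degrees with trivial vertical differential has vertical homology equal to itself, hence $\kk$-projective, so it is cofibrant in $\bichainCE$ by \cite[\S4]{muro_roitzheim_2019_homotopy_theory_bicomplexes}.

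Third, and this is the technical heart, I would run the standard cellular filtration argument for quasi-free algebras inside the semi-model category $\algebras{\bichainCE}{\O{O}}$. Filter $V$ by the weight of $\K{\D{\O{O}}}$; this yields a bounded-below, exhaustive filtration $F_0V\subset F_1V\subset\cdots$ of $V$ by $\kk$-projective sub-bicomplexes with trivial vertical differential and $\kk$-projective, hence cofibrant, subquotients, so each inclusion $F_{n-1}V\hookrightarrow F_nV$ is a cofibration in $\bichainCE$. The internal differential of $V$ preserves this filtration, while both the Koszul differential of $\D{\O{O}}\circ_{\tm}\K{\D{\O{O}}}$ and the perturbation \eqref{bar_algebras_perturbation} strictly lower the weight of $\K{\D{\O{O}}}$, since in \eqref{bar_algebras_perturbation} the factor $\mu^{(1)}$ has strictly smaller weight than $\mu$. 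Consequently $\bar\cobar_{\D{\O{O}}}\bbar_{\D{\O{O}}}(A)=\operatorname{colim}_n(\O{O}\circ F_nV,d)$, and the weight-lowering nature of the perturbation lets one present each inclusion $(\O{O}\circ F_{n-1}V,d)\hookrightarrow(\O{O}\circ F_nV,d)$ as a pushout of the free $\O{O}$-algebra map on the cofibration $F_{n-1}V\hookrightarrow F_nV$, the attaching map encoding precisely that weight-lowering part of the differential.

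Since the free functor $\O{O}\circ-$ is the left adjoint of the transferred semi-model structure of Proposition \ref{semi-model}, it carries generating cofibrations of $\bichainCE$ to generating cofibrations of $\algebras{\bichainCE}{\O{O}}$, so each such inclusion is a cofibration and $\bar\cobar_{\D{\O{O}}}\bbar_{\D{\O{O}}}(A)$ is cellular, hence cofibrant. The main obstacle is exactly this last step: correctly presenting each filtration stage as a pushout of a free map on a cofibration, since one must keep track of the internal horizontal differential of $V$ together with the weight-lowering perturbation, all within a semi-model rather than a full model category, where such cofibrancy arguments require cofibrant sources.
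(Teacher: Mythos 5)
Your proof is correct and follows essentially the same route as the paper's: filter by the weight of $\K{\D{\O{O}}}$, use minimality of $A$ to see that the bar perturbation and the Koszul differential strictly decrease the weight, and conclude by observing that the freely attached generating bicomplexes are $\kk$-projective with trivial vertical differential, hence cofibrant in $\bichainCE$ (the paper phrases the attached cells as $t_{h\geq 0}\O{D}\circ(\K{\D{\O{O}}})^{(n)}\circ A$, obtained by pushing the weight filtration of $\bbar_{\D{\O{O}}}(A)$ through $\cobar_{\D{\O{O}}}$ and the reflector $R$, rather than filtering your $V$ directly). The only quibble is your second step: the claim that $V$ itself has trivial vertical differential ignores the perturbation $\partial^A$, but this is harmless since your third step correctly treats $\partial^A$ as part of the weight-lowering attaching data, which is all the argument needs.
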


\begin{proof}
    We filter $\bbar_{\D{\O{O}}}(A)=(\K{\D{\O{O}}}\circ A,\partial^A)$ by the weight of $\K{\D{\O{O}}}$. This actually defines a filtration because, since $A$ is minimal,  the differential of $\bbar_{\D{\O{O}}}(A)$ is just $\partial^A$, which strictly decreases the filtration level, see \eqref{bar_algebras_perturbation}. Indeed, in the first summation of \eqref{bar_algebras_perturbation}, $(\delta^j;\mu)$ has less weight than $(\delta^i;\mu)$ since $k>0$, so $j<i$. In the second summation, $\mu^{(2)}$ has positive weight, so $\mu^{(1)}$ has less weight than $\mu$.

    The previous filtration of $\bbar_{\D{\O{O}}}(A)$ induces a $\D{\O{O}}$-algebra filtration of $\cobar_{\D{\O{O}}}\bbar_{\D{\O{O}}}(A)=(\D{\O{O}}\circ\bbar_{\D{\O{O}}}A,d\circ\id{A})$ with $F_n\cobar_{\D{\O{O}}}\bbar_{\D{\O{O}}}(A)=(\D{\O{O}}\circ F_n\bbar_{\D{\O{O}}}A,d\circ\id{A})$. This holds because here $d$ is the Koszul complex differential of $\D{\O{O}}\circ_\zeta\K{\D{\O{O}}}$, which strictly decreases the filtration level. Hence, $F_n\cobar_{\D{\O{O}}}\bbar_{\D{\O{O}}}(A)$ is obtained from $F_{n-1}\cobar_{\D{\O{O}}}\bbar_{\D{\O{O}}}(A)$ by freely attaching the graded complex $(\K{\D{\O{O}}})^{(n)}\circ A$ in $\algebras{\grchain}{\D{\O{O}}}$. The attaching map is defined by $\partial^A$ and $d\circ\id{A}$, compare the proof of \cite[Proposition 2.12]{fresse_2009_operadic_cobar_constructions}. If we apply the reflector $R$ in Remark \ref{reflection}, we get a filtration of $\bar\cobar_{\D{\O{O}}}\bbar_{\D{\O{O}}}(A)$ such that $F_n\bar\cobar_{\D{\O{O}}}\bbar_{\D{\O{O}}}(A)$ is obtained from $F_{n-1}\bar\cobar_{\D{\O{O}}}\bbar_{\D{\O{O}}}(A)$ by freely attaching the bicomplex $t_{h\geq 0}\O{D}\circ (\K{\D{\O{O}}})^{(n)}\circ A$ in $\algebras{\bichain}{\O{O}}$. The proposition will follow if we show that $(\K{\D{\O{O}}})^{(n)}\circ A$ is cofibrant in $\bichainCE$. Since $A$ is $\kk$-projective and each $\K{\O{O}}(r)$ is projective as a $\SS_r$-module, \[(\K{\D{\O{O}}})^{(n)}\circ A=\bigoplus_{\substack{r\geq 0\\u+v=n}}\kk\cdot \delta^u\otimes(\K{\O{O}})^{(v)}(r)\otimes_{\SS_r}A^{\otimes^r}\] is a $\kk$-projective graded complex. It has trivial differential because $A$ is minimal. Therefore, $t_{h\geq 0}\O{D}\circ (\K{\D{\O{O}}})^{(n)}\circ A$ is a retract of a direct sum of copies of the bicomplexes $S^{0,q}$, $\partial_vD^{p,q}$, $p>0$, $q\in\Z$, which are cofibrant in $\bichainCE$, see \cite[\S4]{muro_roitzheim_2019_homotopy_theory_bicomplexes}.
\end{proof}

The following theorem is the improved version of Proposition \ref{minimal_exist}. It shows that minimal models are essentially unique.

\begin{theorem}\label{compare_minimal_models}
    Any $\O{O}$-algebra $A$ has a minimal model $\minimal{A}'\leadsto A$ such that, if $\minimal{A}\leadsto A$ is another minimal model, then there exists an $E^2$-equivalence $\minimal{A}\leadsto \minimal{A}'$ which actually induces the identity in $H_*(A)$ on $E^2$ terms.
\end{theorem}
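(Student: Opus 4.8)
The plan is to single out a particularly good minimal model $\minimal{A}'$, built by homotopy transfer from a genuine cofibrant resolution, and then to funnel any other minimal model into it through the bar--cobar machinery developed in this section. First I would fix a trivial fibration $q\colon\cofres{A}\to A$ in $\algebras{\bichainCE}{\O{O}}$ with cofibrant source (a cofibrant resolution) together with a contraction of its underlying graded complex onto the projective vertical homology, exactly as in the proof of Proposition \ref{minimal_exist}. The homotopy transfer theorem produces the minimal model $i'\colon\minimal{A}'\leadsto\cofres{A}$, and under the standing hypotheses of this section (which make $\K{\O{O}}$ arity-wise $\SS_r$-projective) it also produces a homotopy inverse, a derived $\infty$-morphism $p'\colon\cofres{A}\leadsto\minimal{A}'$ with $p'i'$ and $i'p'$ filtration-preserving $\infty$-homotopic to the respective identities. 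This left inverse $p'$ is precisely the ingredient anticipated in the proof of Proposition \ref{minimal_exist}. Composing $i'$ with $q$ gives the sought minimal model $\minimal{A}'\leadsto A$.

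Now let $f\colon\minimal{A}\leadsto A$ be an arbitrary minimal model. Following the decomposition used in the proof of Theorem \ref{recover_dg}, $f$ factors as $\minimal{A}\overset{u}{\leadsto}\bar\cobar_{\D{\O{O}}}\bbar_{\D{\O{O}}}\minimal{A}\overset{g}{\to}A$, where $u$ is the composite of the unit \eqref{infinity_unit} with the projection onto the good cobar construction, and $g$ is the adjoint strictification map; both are $E^2$-equivalences by Propositions \ref{infinity_E1} and \ref{quotient}. By Proposition \ref{cofibrantCE} the source $\bar\cobar_{\D{\O{O}}}\bbar_{\D{\O{O}}}\minimal{A}$ is cofibrant, so $g$ lifts through the trivial fibration $q$ to a strict morphism $\ell\colon\bar\cobar_{\D{\O{O}}}\bbar_{\D{\O{O}}}\minimal{A}\to\cofres{A}$ with $q\ell=g$; by the 2-out-of-3 property of $E^2$-equivalences (Remark \ref{minimal_remark_1}) the lift $\ell$ is again an $E^2$-equivalence. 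I would then define the comparison to be the composite derived $\infty$-morphism
\[c\;:=\;p'\circ\ell\circ u\colon\minimal{A}\leadsto\minimal{A}',\]
which is an $E^2$-equivalence as a composite of three such.

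It remains to check that $c$ induces the identity on $H_*(A)=E^2$. Since $i'p'$ is $\infty$-homotopic to $\id_{\cofres{A}}$, we have $i'c=i'p'\,\ell u\simeq\ell u$, whence $q\,i'c\simeq q\ell u=gu=f$ up to filtration-preserving $\infty$-homotopy. As filtered $\infty$-homotopic morphisms agree on the $E^2$-page, the composite $\minimal{A}\overset{c}{\leadsto}\minimal{A}'\to A$ induces on $E^2$ the same isomorphism onto $H_*(A)$ as $f$; under the canonical identifications $E^2(\minimal{A})\cong H_*(A)\cong E^2(\minimal{A}')$ this says exactly that $c$ is the identity on $H_*(A)$ at the $E^2$ level. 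The main obstacle is the construction of the homotopy inverse $p'$ as a genuine derived $\infty$-morphism, together with the filtered homotopies $i'p'\simeq\id$ and $p'i'\simeq\id$: this is the step that genuinely uses the section's hypotheses, since inverting an $E^2$-equivalence of derived homotopy algebras up to homotopy requires the projectivity of $\K{\O{O}}$, and it is also what allows the concluding argument that $\infty$-homotopic maps coincide on $E^2$.
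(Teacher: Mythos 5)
Your proposal is correct and follows essentially the same route as the paper: construct $\minimal{A}'$ by homotopy transfer from a cofibrant resolution $\cofres{A}\to A$, obtain a (left/homotopy) inverse $\cofres{A}\leadsto\minimal{A}'$ from Berglund's transfer theorem, factor the other minimal model through $\bar\cobar_{\D{\O{O}}}\bbar_{\D{\O{O}}}\minimal{A}$ as in Theorem \ref{recover_dg}, and lift through the trivial fibration using Proposition \ref{cofibrantCE}. Your explicit verification of the ``identity on $E^2$'' clause via filtration-preserving $\infty$-homotopies is a welcome addition that the paper's proof leaves implicit.
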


\begin{proof}
    The minimal model $i\colon \minimal{A}'\leadsto A$ is that constructed in the proof of Proposition \ref{minimal_exist} from a cofibrant resolution $\cofres{A}\to A$ in $\algebras{\bichain}{\O{O}}$. In particular the graded complex (with trivial differential) underlying $\minimal{A}'$ is the vertical homology $H_*(\cofres{A})$, and $\minimal{A}'\leadsto A$ factors through the cofibrant resolution
    \[\minimal{A}'\leadsto \cofres{A}\to A.\]
    Under our hypotheses, Berglund's version of the homotopy transfer theorem \cite[Theorem 1.3]{berglund_2014_homological_perturbation_theory} yields a left inverse $\cofres{A}\leadsto \minimal{A}'$ to $i$. This left inverse is an $E^2$-equivalence by the 2-out-of-3 property.

    Let $\minimal{A}\leadsto A$ be another minimal model. We consider the following diagram, where the bottom row is the decomposition of $\minimal{A}\leadsto A$ in the proof of Theorem \ref{recover_dg}
    \begin{center}
        \begin{tikzcd}
            &&&\cofres{A}\ar[d, ->>]\\
            \minimal{A} \ar[r, rightsquigarrow] &
            \cobar_{\D{\O{O}}}\bbar_{\D{\O{O}}}\minimal{A} \ar[r] &
            \bar \cobar_{\D{\O{O}}}\bbar_{\D{\O{O}}}\minimal{A} \ar[r] \ar[ru, dashed]&
            A.
        \end{tikzcd}
    \end{center}
    The vertical arrow is a trivial fibration because it is a cofibrant resolution in $\algebras{\bichain}{\O{O}}$. Moreover, $\bar \cobar_{\D{\O{O}}}\bbar_{\D{\O{O}}}\minimal{A}$ is cofibrant in this semi-model category by Proposition \ref{cofibrantCE}. Hence, there is a lifting (dashed arrow). All arrows here are $E^2$-equivalences by the 2-out-of-3 property, see the proof of Proposition \ref{minimal_exist}. The desired $E^2$-equivalence $\minimal{A}\leadsto \minimal{A}'$ is the composition of $\minimal{A}\leadsto \cofres{A}$ in the previous diagram with the derived $\infty$-morphism $\cofres{A}\leadsto \minimal{A}'$ above.
\end{proof}



The same argument proves the following result.

\begin{proposition}\label{commutative_square}
    Any differential graded $\O{O}$-algebra $A$ has a minimal model $\minimal{A}'\leadsto A$ such that, if $f\colon B\to A$ is an $\O{O}$-algebra morphism and $\minimal{A}\leadsto B$ is a minimal model, then there exists an morphism of derived homotopy $\O{O}$-algebras $\minimal{A}\leadsto \minimal{A}'$ which actually induces $H_*(f)$ on $E^2$ terms.
\end{proposition}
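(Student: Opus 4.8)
The plan is to run the proof of Theorem \ref{compare_minimal_models} almost verbatim, the only change being that the ``second minimal model'' is replaced by the composite derived $\infty$-morphism $g\colon\minimal{A}\leadsto A$ obtained by composing the minimal model $\minimal{A}\leadsto B$ with the strict map $f\colon B\to A$, regarded as a derived $\infty$-morphism between $\Oinf{\D{\O{O}}}$-algebras concentrated in horizontal degree $0$. As before, I take $\minimal{A}'\leadsto A$ to be the distinguished minimal model constructed in the proof of Proposition \ref{minimal_exist} from a cofibrant resolution $\cofres{A}\to A$ in $\algebras{\bichainCE}{\O{O}}$, which factors as $\minimal{A}'\leadsto\cofres{A}\to A$ and comes equipped with a left inverse $r\colon\cofres{A}\leadsto\minimal{A}'$ to $\minimal{A}'\leadsto\cofres{A}$ supplied by Berglund's homotopy transfer theorem \cite[Theorem 1.3]{berglund_2014_homological_perturbation_theory}; this $r$ is an $E^2$-equivalence by the 2-out-of-3 property.

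Next I decompose $g$ exactly as in the proof of Theorem \ref{recover_dg}, as $\minimal{A}\leadsto\cobar_{\D{\O{O}}}\bbar_{\D{\O{O}}}\minimal{A}\to\bar\cobar_{\D{\O{O}}}\bbar_{\D{\O{O}}}\minimal{A}\xrightarrow{h}A$, where $h$ is the adjoint of $\bbar_{\D{\O{O}}}g$ under Corollary \ref{wise_bar_cobar_adjunction}. The first two arrows are $E^2$-equivalences by Propositions \ref{infinity_E1} and \ref{quotient} (they depend only on $\minimal{A}$ being $\kk$-projective and minimal, not on $g$), whereas $h$ need no longer be an $E^2$-equivalence since $g$ is not. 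Because $\bar\cobar_{\D{\O{O}}}\bbar_{\D{\O{O}}}\minimal{A}$ is cofibrant in $\algebras{\bichainCE}{\O{O}}$ by Proposition \ref{cofibrantCE} and $\cofres{A}\to A$ is a trivial fibration, the lifting axiom of the semi-model structure yields a morphism $\ell\colon\bar\cobar_{\D{\O{O}}}\bbar_{\D{\O{O}}}\minimal{A}\to\cofres{A}$ with $(\cofres{A}\to A)\circ\ell=h$. The sought morphism of derived homotopy $\O{O}$-algebras is then the composite
\[
\minimal{A}\leadsto\cobar_{\D{\O{O}}}\bbar_{\D{\O{O}}}\minimal{A}\to\bar\cobar_{\D{\O{O}}}\bbar_{\D{\O{O}}}\minimal{A}\xrightarrow{\ell}\cofres{A}\xrightarrow{r}\minimal{A}'.
\]

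The genuinely new point, and the step I expect to require the most care, is verifying that this composite induces $H_*(f)$ on the $E^2$ pages rather than merely an isomorphism. Writing $\Phi$ for the functorial passage to $E^2$, I identify $E^2(\minimal{A})\cong H_*(B)$ and $E^2(\minimal{A}')\cong H_*(A)$ through the $E^2$-isomorphisms $\alpha=\Phi(\minimal{A}\leadsto B)$ and $\beta=\Phi(\minimal{A}'\leadsto A)$. Using that $\Phi$ sends the two leading arrows and $\cofres{A}\to A$ to isomorphisms, that $(\cofres{A}\to A)\circ\ell=h$ and that $h$ composed with the leading arrows equals $g$, one gets $\Phi(\minimal{A}\leadsto\minimal{A}')=\Phi(r)\circ\Phi(\cofres{A}\to A)^{-1}\circ\Phi(g)$; and since $r$ is left inverse to $\minimal{A}'\leadsto\cofres{A}$ and $\minimal{A}'\leadsto A=(\cofres{A}\to A)\circ(\minimal{A}'\leadsto\cofres{A})$, a short chase gives $\Phi(r)\circ\Phi(\cofres{A}\to A)^{-1}=\beta^{-1}$, whence $\beta\circ\Phi(\minimal{A}\leadsto\minimal{A}')=\Phi(g)$. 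Finally $\Phi(g)=H_*(f)\circ\alpha$, because $g=f\circ(\minimal{A}\leadsto B)$ and, $f$ being a strict map of differential graded $\O{O}$-algebras, $\Phi(f)=H_*(f)$ on $E^2(B)=H_*(B)$ and $E^2(A)=H_*(A)$. Under the identifications $\alpha,\beta$ this says precisely that $\minimal{A}\leadsto\minimal{A}'$ induces $H_*(f)$, as required.
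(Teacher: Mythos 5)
Your proposal is correct and follows the same route as the paper, which proves this proposition simply by observing that the argument of Theorem \ref{compare_minimal_models} carries over: one replaces the second minimal model of $A$ by the composite of $\minimal{A}\leadsto B$ with $f$, decomposes it as in Theorem \ref{recover_dg}, lifts against the trivial fibration $\cofres{A}\to A$, and post-composes with the retraction onto $\minimal{A}'$. Your explicit verification that the resulting derived $\infty$-morphism induces $H_*(f)$ on $E^2$ terms is a correct elaboration of what the paper leaves implicit.
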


\section{The total model structure}\label{total_section}

So far we have only considered the Cartan--Eilenberg model structure on the category $\bichain$ of bicomplexes. In this section we consider the \emph{total model structure} $\bichaintotal$, which is Quillen equivalent to $\chain$ (with the projective model structure), see \cite[\S3]{muro_roitzheim_2019_homotopy_theory_bicomplexes}. We extend this to algebras over an operad in $\chain$.

Weak equivalences in $\bichaintotal$ are the morphisms whose totalization (see Definition \ref{totalization}) is a quasi-isomorphism in $\chain$. A morphism of bicomplexes is a (trivial) fibration in $\bichaintotal$ if it is bidegree-wise surjective and it induces an isomorphism in vertical homology in positive (resp.~all) degrees.

Consider the adjoint pair
\begin{equation}\label{transfer_classic}
    \chain\mathop{\rightleftarrows}\limits^{\O{O}\circ-}_U\algebras{\chain}{\O{O}}.
\end{equation}
analogue to \eqref{underlying_bicomplex} for complexes instead of bicomplexes.

\begin{proposition}\label{model_derived_algebras_total}
    Let $\O{O}$ be an operad in $\chain$ such that $\algebras{\chain}{\O{O}}$ admits the transferred model structure along \eqref{transfer_classic}. Then the category $\algebras{\bichain}{\O{O}}$ of $\O{O}$-bialgebras admits the transferred model structure from the total model structure $\bichaintotal$ along \eqref{underlying_bicomplex}, and we denote it by $\algebras{\bichaintotal}{\O{O}}$.
\end{proposition}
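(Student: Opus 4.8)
The plan is to verify the hypotheses of the standard transfer (lifting) criterion for the free--forgetful adjunction \eqref{underlying_bicomplex}, now using the cofibrantly generated model structure $\bichaintotal$ in place of $\bichainCE$, cf.\ the setup of \cite[\S12]{fresse_2009_modules_operads_functors}. Let $I_t$ and $J_t$ be the generating cofibrations and generating trivial cofibrations of $\bichaintotal$ from \cite[\S3]{muro_roitzheim_2019_homotopy_theory_bicomplexes}. The candidate generating (trivial) cofibrations of $\algebras{\bichain}{\O{O}}$ are $\O{O}\circ I_t$ and $\O{O}\circ J_t$. Two things must be checked: (a) the small object argument applies to these sets, and (b) every relative $\O{O}\circ J_t$-cell complex is a weak equivalence, i.e.\ has quasi-isomorphic totalization. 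Condition (a) is routine: as in Remark \ref{reflection}, $\algebras{\bichain}{\O{O}}$ is locally presentable and the forgetful functor $U$ preserves filtered colimits (these are computed in the underlying category), so every object is small and the small object argument runs.

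The main obstacle is the acyclicity condition (b), and the key device for it is the totalization functor. By Remark \ref{totalization_remark}, $\tot$ lifts to $\tot\colon\algebras{\bichain}{\O{O}}\to\algebras{\chain}{\O{O}}$; since $\tot$ strictly preserves tensor products and is a left adjoint (it is computed by coproducts and hence preserves colimits), it commutes with the free functors, $\tot(\O{O}\circ X)=\O{O}\circ\tot(X)$, giving a square relating \eqref{underlying_bicomplex} and \eqref{transfer_classic} which commutes both on forgetful functors and on free functors. Consequently a morphism $f$ in $\algebras{\bichain}{\O{O}}$ is a weak equivalence in the proposed structure if and only if $U(f)$ is a total weak equivalence, if and only if $\tot\,U(f)=U\,\tot(f)$ is a quasi-isomorphism in $\chain$, if and only if $\tot(f)$ is a weak equivalence in $\algebras{\chain}{\O{O}}$.

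It remains to deduce (b). Because $\tot$ preserves colimits and free algebras, it sends a relative $\O{O}\circ J_t$-cell complex in $\algebras{\bichain}{\O{O}}$ to a relative $\O{O}\circ\tot(J_t)$-cell complex in $\algebras{\chain}{\O{O}}$. By \cite[\S3]{muro_roitzheim_2019_homotopy_theory_bicomplexes}, $\tot$ is the left adjoint of a Quillen equivalence $\bichaintotal\rightleftarrows\chain$, hence a left Quillen functor, so each map in $\tot(J_t)$ is a trivial cofibration in $\chain$. Under the standing hypothesis that $\algebras{\chain}{\O{O}}$ carries the transferred model structure, the free functor $\O{O}\circ-\colon\chain\to\algebras{\chain}{\O{O}}$ is left Quillen, so $\O{O}\circ\tot(J_t)$ consists of trivial cofibrations; as trivial cofibrations are closed under pushout and transfinite composition, the image cell complex is a trivial cofibration, in particular a weak equivalence in $\algebras{\chain}{\O{O}}$. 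By the equivalence established in the previous paragraph, the original $\O{O}\circ J_t$-cell complex is then a weak equivalence in $\algebras{\bichain}{\O{O}}$. This verifies (b), and the transfer criterion produces the desired model structure $\algebras{\bichaintotal}{\O{O}}$, in which $f$ is a weak equivalence or (trivial) fibration precisely when $U(f)$ is so in $\bichaintotal$.
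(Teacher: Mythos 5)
Your proposal is correct and follows essentially the same route as the paper: reduce the transfer to the acyclicity condition for relative $\O{O}\circ J_{\tot}$-cell complexes, use that totalization preserves colimits and tensor products to identify the totalization of such a cell complex with a relative $\O{O}\circ\tot(J_{\tot})$-cell complex in $\algebras{\chain}{\O{O}}$, and conclude from the hypothesis that this category carries the transferred model structure. Your paragraph spelling out why $\O{O}\circ\tot(J_{\tot})$ consists of trivial cofibrations closed under pushout and transfinite composition just makes explicit what the paper leaves to the reader.
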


\begin{proof}
    We are going to check that the conditions in \cite[Theorem 11.3.2]{hirschhorn_2003_model_categories_their} are satisfied. Since $\algebras{\bichain}{\O{O}}$ is a locally presentable category, it suffices to prove that, if $J_{\tot}$ denotes the set of generating trivial cofibrations for $\bichaintotal$ \cite[\S3]{muro_roitzheim_2019_homotopy_theory_bicomplexes}, then any relative $\O{O}\circ J_{\tot}$-cell complex $f\colon A\to B$ is a weak equivalence in $\bichain$, i.e.~$\tot(f)$ is a quasi-isomorphism in $\chain$.

    The totalization functor in Definition \ref{totalization} preserves all (co)limits, which are computed point-wise in source and target. It also preserves tensor products, see Remark \ref{totalization}. Therefore, the usual construction of the push-out of a free map in a category of algebras over an operad (see e.g.~\cite[\S7]{harper_2010_homotopy_theory_modules}) shows that the totalization of a relative $\O{O}\circ J_{\tot}$-cell complex $f$ is a relative relative $\O{O}\circ \tot(J_{\tot})$-cell complex $\tot(f)$ in $\chain$. Maps in $\tot(J_{\tot})$ are trivial cofibrations in $\chain$, see \cite[\S3]{muro_roitzheim_2019_homotopy_theory_bicomplexes}, hence $\tot(f)$ is a quasi-isomorphism by the hypothesis on $\algebras{\chain}{\O{O}}$.
\end{proof}

\begin{proposition}\label{quillen_equivalence_derived_algebras_full_inclusion}
    Let $\O{O}$ be an operad in $\chain$ such that $\algebras{\chain}{\O{O}}$ admits the transferred model structure along \eqref{transfer_classic}.
    The inclusion $i\colon \algebras{\chain}{\O{O}}\hookrightarrow\algebras{\bichaintotal}{\O{O}}$ in Remark \ref{vertical_inclusion} is a left Quillen equivalence.
\end{proposition}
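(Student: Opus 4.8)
The plan is to realize $i$ as the left adjoint of a monoidal adjunction lifted from the underlying Quillen equivalence $\chain\simeq\bichaintotal$ of \cite[\S3]{muro_roitzheim_2019_homotopy_theory_bicomplexes}, and then to reduce the Quillen equivalence condition to a totalization computation. First I would identify the right adjoint. At the level of underlying categories the full inclusion $\iota\colon\chain\hookrightarrow\bichain$ of Remark \ref{vertical_inclusion} has a right adjoint $G_0$ sending a bicomplex $X$ to its $0$-th row $(X_{0,*},d_v)$, because a bicomplex morphism $\iota C\to X$ is the same as a chain map $C\to(X_{0,*},d_v)$, the horizontal differential imposing no condition since $X_{-1,*}=0$. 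As $(X\otimes Y)_{0,*}=X_{0,*}\otimes Y_{0,*}$, the functor $G_0$ is strict monoidal, so the monoidal adjunction $\iota\dashv G_0$ lifts to an adjunction $i\dashv G$ on $\O{O}$-algebras, with $G(Y)=(Y_{0,*},d_v)$ carrying the evident $\O{O}$-action. By construction the forgetful functors commute with both adjoints, $U_{\bichain}\circ i=\iota\circ U_{\chain}$ and $U_{\chain}\circ G=G_0\circ U_{\bichain}$, and $\tot\circ i$ is the identity by Remark \ref{totalization_remark}.

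Next I would check that this is a Quillen pair by showing $G$ is right Quillen. In both $\algebras{\chain}{\O{O}}$ and $\algebras{\bichaintotal}{\O{O}}$ the (trivial) fibrations are created by the forgetful functors (both model structures are transferred, by hypothesis and by Proposition \ref{model_derived_algebras_total}), so since $G$ commutes with these it suffices that $G_0$ preserves (trivial) fibrations. A fibration in $\bichaintotal$ is bidegree-wise surjective, so its $0$-th row is surjective, hence a fibration in $\chain$; a trivial fibration is moreover an isomorphism on vertical homology in all degrees, so its $0$-th row is a surjective quasi-isomorphism. Thus $G_0$, and therefore $G$, is right Quillen and $i$ is left Quillen.

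For the equivalence I would use the standard criterion that a Quillen pair is a Quillen equivalence as soon as, for every cofibrant $A$ in $\algebras{\chain}{\O{O}}$ and every fibrant $Y$ in $\algebras{\bichaintotal}{\O{O}}$, a map $g\colon iA\to Y$ is a weak equivalence if and only if its adjunct $g^\flat\colon A\to GY$ is. The key input is the following \textbf{Claim}: for a fibrant bicomplex $X$ in $\bichaintotal$ the counit $\iota G_0X\to X$ totalizes to a quasi-isomorphism $(X_{0,*},d_v)\hookrightarrow\tot X$. I would deduce this formally from the underlying Quillen equivalence $\iota\dashv G_0$: its derived counit $\iota(QG_0X)\to X$ at the fibrant $X$ (with $Q$ a cofibrant replacement) is a weak equivalence, i.e.\ a total quasi-isomorphism, so totalizing and using $\tot\circ\iota=\mathrm{identity}$ shows $QG_0X\to\tot X$ is a quasi-isomorphism; since $QG_0X\to X_{0,*}$ is also one, the $2$-out-of-$3$ property forces $X_{0,*}\hookrightarrow\tot X$ to be a quasi-isomorphism. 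Granting the Claim, let $g\colon iA\to Y$ have adjunct $g^\flat$ and set $X=U_{\bichain}Y$, which is fibrant in $\bichaintotal$. Applying $U_{\bichain}$ to the adjunction triangle $g=\varepsilon_Y\circ i(g^\flat)$ and totalizing gives $\tot(U_{\bichain}g)=\big(X_{0,*}\hookrightarrow\tot X\big)\circ U_{\chain}(g^\flat)$, since $\tot\circ\iota$ is the identity and $U_{\bichain}\varepsilon_Y$ is the underlying counit. As weak equivalences of $\O{O}$-algebras are created by the forgetful functors and weak equivalences in $\bichaintotal$ are the total quasi-isomorphisms, $g$ is a weak equivalence iff $\tot(U_{\bichain}g)$ is a quasi-isomorphism, and by the Claim this holds iff $U_{\chain}(g^\flat)$ is a quasi-isomorphism, i.e.\ iff $g^\flat$ is a weak equivalence. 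This is exactly the criterion, so $i$ is a left Quillen equivalence.

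The step I expect to demand the most care is the Claim, namely pinning down the comparison between the $0$-th row and the total complex of a fibrant bicomplex. Deriving it formally from the already-established underlying Quillen equivalence — rather than analyzing fibrant objects directly via the explicit fibration description, which would require a spectral-sequence argument and Lemma \ref{spectral} — is what keeps the proof short and, crucially, independent of any cofibrancy hypothesis on $\O{O}$ beyond the mere existence of the transferred model structure on $\algebras{\chain}{\O{O}}$. The remaining points, that the lifted functors genuinely form an adjunction commuting with the forgetful functors and that counits are preserved, are routine and I would only verify them briefly.
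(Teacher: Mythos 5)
Your proposal is correct and follows essentially the same route as the paper: identify the right adjoint as the horizontal-degree-$0$ row $A\mapsto A_{0,*}$, observe from the explicit description of (trivial) fibrations in $\bichaintotal$ that it is right Quillen, and reduce the Quillen equivalence to the underlying comparison between the $0$-th row and the total complex coming from \cite[\S3]{muro_roitzheim_2019_homotopy_theory_bicomplexes}. The only divergence is that the paper imports that comparison by citing the proof of \cite[Proposition 3.4]{muro_roitzheim_2019_homotopy_theory_bicomplexes} for arbitrary targets, whereas you rederive it formally from the derived counit at a fibrant object --- which is in fact the more careful formulation, since the inclusion $X_{0,*}\hookrightarrow\tot X$ is only a quasi-isomorphism when $X$ is fibrant in $\bichaintotal$.
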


\begin{proof}
    The functor $i$ has a right adjoint given by $A\mapsto A_{0,*}$. Indeed, the horizontal degree $0$ part of an $\O{O}$-algebra in in $\bichain$ is naturally an $\O{O}$-algebra in $\chain$. This right adjoint clearly preserves (trivial) fibrations, i.e.~it is a right Quillen functor, so $i$ is a left Quillen functor. The proof of \cite[Proposition 3.4]{muro_roitzheim_2019_homotopy_theory_bicomplexes} shows that, for $B$ an arbitrary $\O{O}$-algebra in $\chain$ and $A$ an $\O{O}$-algebra in $\bichain$, a morphism $f\colon i(B)\to A$ in $\algebras{\bichaintotal}{\O{O}}$ is a weak equivalence if and only if the adjoint morphism $f_{0,*}\colon B\to A_{0,*}$ is a weak equivalence in $\algebras{\chain}{\O{O}}$. This proves that the previous Quillen pair is a Quillen equivalence.
\end{proof}

The following result is a version of Proposition \ref{cofibrantCE} for the total model structure.

\begin{proposition}\label{cofibrant}
    Under the standing assumptions of \S\ref{cobar_section},
    if $A$ is a $\kk$-projective minimal $\Oinf{\D{\O{O}}}$-algebra then $\bar\cobar_{\D{\O{O}}}\bbar_{\D{\O{O}}}(A)$ is cofibrant in $\algebras{\bichaintotal}{\O{O}}$.
\end{proposition}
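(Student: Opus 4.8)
The plan is to follow the proof of Proposition \ref{cofibrantCE} essentially verbatim, observing that its entire scaffolding is independent of the model structure chosen on bicomplexes. Exactly as there, I would filter $\bbar_{\D{\O{O}}}(A)$ by the weight of $\K{\D{\O{O}}}$; since $A$ is minimal this is a genuine filtration, because the differential of $\bbar_{\D{\O{O}}}(A)$ is $\partial^A$, which strictly lowers the weight, see \eqref{bar_algebras_perturbation}. This induces a $\D{\O{O}}$-algebra filtration of $\cobar_{\D{\O{O}}}\bbar_{\D{\O{O}}}(A)$, and applying the reflector $R$ of Remark \ref{reflection} yields a filtration of $\bar\cobar_{\D{\O{O}}}\bbar_{\D{\O{O}}}(A)$ in which $F_n$ is obtained from $F_{n-1}$ by freely attaching, in $\algebras{\bichain}{\O{O}}$, the bicomplex
\[
    Y_n = t_{h\geq 0}\O{D}\circ(\K{\D{\O{O}}})^{(n)}\circ A.
\]
Since free extensions of cofibrations are cofibrations in the transferred model structure on $\algebras{\bichaintotal}{\O{O}}$ (which exists by Proposition \ref{model_derived_algebras_total}, the category $\algebras{\chain}{\O{O}}$ being a transferred model category under the standing hypotheses), everything reduces, just as in Proposition \ref{cofibrantCE}, to showing that each $Y_n$ is cofibrant in the base; the only change is that the base is now $\bichaintotal$ rather than $\bichainCE$.

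For this last point I would reuse the computation already carried out in the proof of Proposition \ref{cofibrantCE}. Because $A$ is $\kk$-projective and minimal and each $\K{\O{O}}(r)$ is $\SS_r$-projective, $(\K{\D{\O{O}}})^{(n)}\circ A$ is a $\kk$-projective graded complex with trivial differential, whence $Y_n$ is a retract of a direct sum of copies of the bicomplexes $S^{0,q}$ and $\partial_v D^{p,q}$, $p>0$, $q\in\Z$. As retracts and coproducts of cofibrant objects are cofibrant, it suffices to prove that these two families are cofibrant in $\bichaintotal$. Recall from \S\ref{total_section} that a trivial fibration in $\bichaintotal$ is a bidegree-wise surjection inducing an isomorphism on vertical homology in all degrees, i.e.~a bidegree-wise surjection with vertically acyclic kernel. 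A morphism out of $S^{0,q}$ or out of $\partial_v D^{p,q}$ is determined by the image of a single generating class, subject at most to a vertical-cycle condition: the value of the horizontal differential is either recorded by a second generator or lands in horizontal degree $-1$, hence imposes no constraint. Such data lift along any bidegree-wise surjection that is a vertical-homology isomorphism by the standard argument: lift the vertical homology class of the prescribed cycle to a vertical cycle upstairs, then correct it by the vertical differential of a bidegree-wise lift of the bounding element. Hence $0\to S^{0,q}$ and $0\to\partial_v D^{p,q}$ have the left lifting property against all trivial fibrations, so these objects are cofibrant in $\bichaintotal$. (For $S^{0,q}$ one may alternatively note $S^{0,q}=i(S^q)$ for the left Quillen inclusion $i\colon\chain\to\bichaintotal$ of \cite[\S3]{muro_roitzheim_2019_homotopy_theory_bicomplexes}.)

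Assembling the pieces, each $Y_n$ is cofibrant in $\bichaintotal$, every inclusion $F_{n-1}\hookrightarrow F_n$ is therefore a cofibration in $\algebras{\bichaintotal}{\O{O}}$, and $\bar\cobar_{\D{\O{O}}}\bbar_{\D{\O{O}}}(A)$, being the colimit of this tower starting from a freely attached (hence cofibrant) object, is cofibrant. The one genuinely new ingredient compared with Proposition \ref{cofibrantCE} is the cofibrancy of $S^{0,q}$ and $\partial_v D^{p,q}$ in the total model structure, and I expect this lifting step to be the only real point to verify; the rest is identical. The mild subtlety to keep an eye on is confirming that the transferred structure on $\algebras{\bichaintotal}{\O{O}}$ is available and that free attachment of a base-cofibrant bicomplex indeed produces an algebra cofibration, exactly as tacitly invoked in the Cartan--Eilenberg case.
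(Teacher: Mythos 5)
Your proposal is correct and follows the paper's own proof, which simply says the argument of Proposition \ref{cofibrantCE} carries over once one notes that the transferred structure on $\algebras{\bichaintotal}{\O{O}}$ exists and that the cells $S^{0,q}$ and $\partial_v D^{p,q}$ are cofibrant in $\bichaintotal$. The only difference is that you verify this last cofibrancy by a direct lifting argument (which is sound), whereas the paper cites \cite[\S3]{muro_roitzheim_2019_homotopy_theory_bicomplexes}.
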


\begin{proof}
    Under the assumptions of \S\ref{cobar_section}, $\algebras{\chain}{\O{O}}$ admits the transferred model structure along \eqref{transfer_classic}, see \cite{hinich_1997_homological_algebra_homotopy,hinich_2003_erratum_homological_algebra,muro_2011_homotopy_theory_nonsymmetric,muro_2017_correction_articles_homotopy}.
    The proof is the same as that of Proposition \ref{cofibrantCE}. One just has to notice that the bicomplexes $S^{0,q}$, $\partial_vD^{p,q}$, $p>0$, $q\in\Z$, are also cofibrant in $\bichainCE$ by \cite[\S3]{muro_roitzheim_2019_homotopy_theory_bicomplexes}.
\end{proof}

\begin{remark}\label{cofibrant_resolution}
    The argument in the proof of Proposition \ref{model_derived_algebras_total} also shows that the totalization of a cofibrant object in $\algebras{\bichaintotal}{\O{O}}$ is cofibrant in $\algebras{\chain}{\O{O}}$ since the totalization of a generating cofibration in $\bichaintotal$ is a cofibration in $\chain$, compare \cite[\S3]{muro_roitzheim_2019_homotopy_theory_bicomplexes}. Therefore, under the assumptions of \S\ref{cobar_section}, Theorem \ref{recover_dg} produces a cofibrant replacement of a differential graded $\O{O}$-algebra out of any minimal model.
\end{remark}

\begin{corollary}\label{representatives}
    Under the standing assumptions of \S\ref{cobar_section},
    given two differential graded $\O{O}$-algebras $A,B$ and a minimal model $\minimal{A}\leadsto A$, maps $A\to B$ in the homotopy category of $\algebras{\O{O}}{\chain}$ are represented by derived $\infty$-morphisms $\minimal{A}\leadsto B$. A derived $\infty$-morphism $\minimal{A}\leadsto B$ represents an isomorphism in the homotopy category if and only if it is an $E^2$-equivalence.
\end{corollary}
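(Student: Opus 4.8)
The plan is to transport the whole question into the total model structure $\algebras{\bichaintotal}{\O{O}}$, where the bar--cobar adjunction and the cofibrancy statements already live, and then read off both assertions from a single identification of a hom-set in the homotopy category with a set of derived $\infty$-morphisms modulo homotopy. First I would record that the inclusion $i\colon\algebras{\chain}{\O{O}}\hookrightarrow\algebras{\bichaintotal}{\O{O}}$ of Proposition~\ref{quillen_equivalence_derived_algebras_full_inclusion} not only is a left Quillen equivalence but also preserves and reflects weak equivalences: since $\tot\circ i=\id{}$ (Remark~\ref{totalization_remark}) and weak equivalences in $\bichaintotal$ are exactly the maps with quasi-isomorphic totalization, a map $if$ is a weak equivalence precisely when $f$ is. Consequently $i$ induces a bijection $[A,B]\cong[iA,iB]$ on the hom-sets of the homotopy categories.

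Next I would exhibit $W:=\bar\cobar_{\D{\O{O}}}\bbar_{\D{\O{O}}}\minimal{A}$ as a cofibrant replacement of $iA$ in $\bichaintotal$: it is cofibrant by Proposition~\ref{cofibrant} (here $\minimal{A}$ is $\kk$-projective and minimal), and $W\to iA$ is a weak equivalence by Theorem~\ref{recover_dg}, whose conclusion is exactly that $\tot$ of this map is a quasi-isomorphism. Moreover $iB$ is fibrant, since $iB\to 0$ is bidegreewise surjective and an isomorphism on vertical homology in positive horizontal degrees, $H_*^{v}(iB)$ being concentrated in horizontal degree $0$. Therefore
\[
[A,B]\cong[iA,iB]=\pi(W,iB)=\operatorname{Hom}_{\algebras{\bichain}{\O{O}}}(W,iB)/(\text{homotopy}).
\]
By the adjunction of Corollary~\ref{wise_bar_cobar_adjunction} this hom-set is the set of quasi-cofree $\K{\D{\O{O}}}$-coalgebra maps $\bbar_{\D{\O{O}}}\minimal{A}\to\bbar_{\D{\O{O}}}(iB)$, which under the equivalence \eqref{dO_infinity_bar} is precisely the set of derived $\infty$-morphisms $\minimal{A}\leadsto B$. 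This yields the first assertion: homotopy classes of maps $A\to B$ form a quotient of the set of derived $\infty$-morphisms $\minimal{A}\leadsto B$.

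For the second assertion I would trace a given derived $\infty$-morphism $g$ through the bijection above to its strictification $\tilde g\colon W\to iB$, and use that $g=\tilde g\circ u$, where $u\colon\minimal{A}\leadsto W$ is the unit $\infty$-morphism \eqref{infinity_unit} followed by the projection onto the good cobar construction; this is the triangle identity of $\bar\cobar_{\D{\O{O}}}\dashv\bbar_{\D{\O{O}}}$ read under \eqref{dO_infinity_bar}. As in the proof of Theorem~\ref{recover_dg}, $u$ is an $E^2$-equivalence (an $E^1$-isomorphism by Proposition~\ref{infinity_E1} composed with the $E^2$-equivalence of Proposition~\ref{quotient}), so by the $2$-out-of-$3$ property of $E^2$-equivalences (Remark~\ref{minimal_remark_1}) $g$ is an $E^2$-equivalence if and only if $\tilde g$ induces an isomorphism on $E^2$. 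On the other hand, being a map between the cofibrant object $W$ and the fibrant object $iB$, $\tilde g$ represents an isomorphism in the homotopy category if and only if it is a weak equivalence in $\bichaintotal$, i.e.\ if and only if $\tot\tilde g$ is a quasi-isomorphism.

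It then remains to prove that $\tot\tilde g$ is a quasi-isomorphism if and only if $\tilde g$ is an $E^2$-isomorphism; one direction is Lemma~\ref{spectral}. For the converse the key observation is that both spectral sequences degenerate at $E^2$ with $E^2$ concentrated in horizontal degree $0$: indeed $E^2(iB)=H_*(B)$ sits in horizontal degree $0$, while $E^2(W)\cong E^2(\minimal{A})\cong E^2(A)=H_*(A)$ through the $E^2$-equivalences $u$ and the minimal model $f\colon\minimal{A}\leadsto A$, and $A$ is concentrated in horizontal degree $0$. Since the filtrations are bounded below and exhaustive, hence complete (Remark~\ref{spectral_sequence_remark}), the spectral sequences converge, and because each is concentrated in a single filtration degree the map induced on homology coincides with $E^2(\tilde g)$; thus $\tot\tilde g$ is a quasi-isomorphism exactly when $\tilde g$ is an $E^2$-isomorphism. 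Combining the last two paragraphs identifies representing an isomorphism in the homotopy category with being an $E^2$-equivalence. The step I expect to be the main obstacle is precisely this degeneration argument --- pinning down that $E^2(\minimal{A})$ lives in horizontal degree $0$ and handling convergence with care --- because without it a totalization quasi-isomorphism need not be an $E^2$-isomorphism.
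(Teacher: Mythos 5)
Your proposal is correct and follows essentially the same route as the paper: pass to $\algebras{\bichaintotal}{\O{O}}$ via the Quillen equivalence, use Theorem \ref{recover_dg} and Proposition \ref{cofibrant} to exhibit $\bar\cobar_{\D{\O{O}}}\bbar_{\D{\O{O}}}\minimal{A}$ as a cofibrant replacement with $iB$ fibrant, identify strict maps out of it with derived $\infty$-morphisms via Corollary \ref{wise_bar_cobar_adjunction} and \eqref{dO_infinity_bar}, and settle the second assertion by factoring through the unit and comparing $E^2$-pages concentrated in horizontal degree $0$. The only difference is that you spell out the degeneration/convergence argument that the paper compresses into ``standard spectral sequence arguments,'' which is a welcome elaboration rather than a deviation.
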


\begin{proof}
    By Proposition \ref{quillen_equivalence_derived_algebras_full_inclusion}, we can place $A, B$ in $\algebras{\bichaintotal}{\O{O}}$. They are fibrant here since they are concentrated in horizontal degree $0$. By Theorem \ref{recover_dg} and Proposition \ref{cofibrant}, $\bar\cobar_{\D{\O{O}}}\bbar_{\D{\O{O}}}(\minimal{A})$ is a cofibrant replacement of $A$, so maps $A\to B$ in the homotopy category of $\algebras{\O{O}}{\chain}$ are represented by honest maps $\bar\cobar_{\D{\O{O}}}\bbar_{\D{\O{O}}}(\minimal{A})\to B$ in $\algebras{\bichaintotal}{\O{O}}$. By Corollary \ref{wise_bar_cobar_adjunction} and \eqref{dO_infinity_bar}, a map $\bar\cobar_{\D{\O{O}}}\bbar_{\D{\O{O}}}(\minimal{A})\to B$ is essentially the same thing as a derived $\infty$-morphism $\minimal{A}\leadsto B$.

    The $E^2$-term of the spectral sequence of $A_M$ is $H_*(A)$ concentrated in horizontal degree $0$ by definition of minimal model. By Propositions \ref{infinity_E1} and \ref{quotient} and standard spectral sequence arguments, a map $\bar\cobar_{\D{\O{O}}}\bbar_{\D{\O{O}}}(\minimal{A})\to B$ is a weak equivalence in $\algebras{\bichaintotal}{\O{O}}$ if and only if the corresponding derived $\infty$-morphism $\minimal{A}\leadsto B$ is an $E^2$-equivalence.
\end{proof}

\begin{remark}
    It should be possible to work out a notion of derived $\infty$-homotopy, like in \cite{berglund_2014_homological_perturbation_theory,dotsenko_poncin_2016_tale_three_homotopies}, to describe the equivalence relation on the set of derived $\infty$-morphisms $\minimal{A}\leadsto B$ whose quotient is the set of maps $A\to B$ in the homotopy category. For $\O{O}=\O{A}$ the associative operad, derived $\infty$-homotopies should coincide with filtration-preserving $\infty$-homotopies in the sense of \cite[Definition 1.2.1.7]{lefevre-hasegawa_2003_categories} and \S\ref{presentation}. These derived $\infty$-homotopies were considered in \cite{cirici_egas_santander_livernet_whitehouse_2018_derived_infinity_algebras} under the name of $0$-homotopies. They also defined $0$-homotopies for twisted complexes, which should be the right notion of derived $\infty$-homotopy for $\O{O}=\unit$ the initial operad.
\end{remark}

\begin{example}\label{dugger_shipley_non_formal}
    We here apply Corollary \ref{representatives} to show by contradiction that the differential graded associative algebra of Dugger and Shipley considered in Example \ref{example_dugger_shipley} is not formal. Otherwise, there should be an $E^2$-equivalence of derived homotopy associative algebras $f\colon \minimal{A}\leadsto H_*(A)$ given by bidegree $(-i,r-1+i)$ maps $f_{i,r}\colon \minimal{A}^{\otimes^r}\to H_*(A)$, $i\geq 0$, $r\geq 1$. We can assume without loss of generality that $f$ induces the identity on $E^2$ terms, therefore $f_{0,1}(x^n)=x^n$, $n\in\Z$, and $f_{0,1}(cx^n)=0$ for degree reasons.

    The graded algebra $H_*(A)$ regarded as a derived homotopy associative algebra satisfies $m_{i,r}=0$ and $d_j=0$ for $(i,r)\neq (0,2)$ and all $j$. The minimal model $\minimal{A}$ in Example \ref{example_dugger_shipley} satisfies the same equations for $(i,r)\neq (0,2),(1,2)$ and $j\neq 1$. The operation $m_{0,2}$ is the binary associative product in both cases. For degree reasons, $f_{i,r}=0$ for $i>r$. Moreover, $d_1(\minimal{A})\subset p\cdot \minimal{A}$ and $p\cdot H_*(A)=0$. Therefore, the derived $\infty$-morphism equation in Example \eqref{examples_derived_infty-morphisms} (1) reduces to
    \begin{multline*}
        0=\sum_{l=1}^{r-1}(-1)^{l+1}\left(f_{i,r-1}(x_{1},\dots,x_{l}x_{l+1},\dots)+f_{i-1,r-1}(x_{1},\dots,m_{1,2}(x_{l},x_{l+1}),\dots)\right)\\
        -\sum_{k=0}^i\sum_{t=1}^{r-1}(-1)^{(t+1)+(r-t-1)\sum\limits_{u=1}^{t}\abs{x_u}}f_{k,t}(x_{1},\dots)f_{i-k,r-t}(\dots, x_{r}),
    \end{multline*}
    for all $i\geq 0$ and $r\geq 1$.

    For $(i,r)=(1,2)$ we obtain
    \begin{align*}
        f_{1,1}(x_1x_2)+f_{0,1}m_{1,2}(x_1,x_2) & =f_{1,1}(x_1)f_{0,1}(x_2)+f_{0,1}(x_1)f_{1,1}(x_2).
    \end{align*}
    Taking $(x_1,x_2)=(c,x)$,
    \begin{align*}
        f_{1,1}(cx) & =f_{1,1}(c)x.
    \end{align*}
    Taking $(x_1,x_2)=(x,c)$,
    \begin{align*}
        f_{1,1}(xc)+f_{0,1}m_{1,2}(x,c) & = xf_{1,1}(c).
    \end{align*}
    By the relations in $\minimal{A}$,
    \[xc=-cx,\]
    see Example \ref{example_dugger_shipley}.
    For degree reasons, $f_{1,1}(c)=a\cdot x$ for some $a\in\F_p$, hence
    \[xf_{1,1}(c)=f_{1,1}(c)x.\]
    Using the formula for $m_{1,2}$ in Example \ref{example_dugger_shipley} we obtain
    \begin{align*}
        -f_{1,1}(c)x+x^{2} & = f_{1,1}(c)x,
    \end{align*}
    i.e.
    \[2f_{1,1}(c)x=x^{2}.\]
    This yields a contradiction if $p=2$. Otherwise, it yields the formula
    \[f_{1,1}(c)=\frac{1}{2}x.\]

    Suppose $p\neq 2$. For $(i,r)=(2,2)$, the derived $\infty$-morphism equation is
    \begin{align*}
        f_{1,1}m_{1,2}(x_1,x_2) & = f_{1,1}(x_1)f_{1,1}(x_2).
    \end{align*}
    In particular,
    \begin{align*}
        f_{1,1}m_{1,2}(c,c) & = f_{1,1}(c)f_{1,1}(c)=\frac{1}{4}x^{2},
    \end{align*}
    but the left hand side is $0$ by the formula for $m_{1,2}$ in Example \ref{example_dugger_shipley}. This is a contradiction.
\end{example}

The interested reader can prove the non-formality of Examples \ref{commutative_minimal} and \ref{Lie_minimal} along the same lines. We will give an elementary proof of this fact in a forthcoming paper by using secondary operations.
















\newcommand{\etalchar}[1]{$^{#1}$}
\providecommand{\bysame}{\leavevmode\hbox to3em{\hrulefill}\thinspace}
\providecommand{\MR}{\relax\ifhmode\unskip\space\fi MR }
\providecommand{\MRhref}[2]{%
    \href{http://www.ams.org/mathscinet-getitem?mr=#1}{#2}
}
\providecommand{\href}[2]{#2}

\end{document}